\newtheorem{thm}{Theorem}[section]
\newtheorem{prop}{Proposition}[section]
\newtheorem{defi}{Definition}[section]
\newtheorem{lem}{Lemma}[section]
\newtheorem{cor}{Corollary}[section]
\newtheorem{rem}{Remark}[section]
\theoremstyle{notation}
\newcommand{\R}{\mathbb{R}}
\numberwithin{equation}{section}
\newcommand{\eps}{\varepsilon}
\newcommand{\vertiii}[1]{{\left\vert\kern-0.25ex\left\vert\kern-0.25ex\left\vert #1
\right\vert\kern-0.25ex\right\vert\kern-0.25ex\right\vert}}
\newcommand{\leqnomode}{\tagsleft@true}
\newcommand{\reqnomode}{\tagsleft@false}
\title{Scattering and blow-up for Chern-Simons-Schr\"odinger equations in the mass supercritical case}
\author[Gou and Zhang]{Vladimir Georgiev \and Tianxiang Gou}
\address{Vladimir Georgiev
\newline \indent Dipartimento di Matematica, Universit\'a di Pisa,
\newline \indent Largo B. Pontecorvo 5, 56100 Pisa, Italy.
\newline \indent Faculty of Science and Engineering, Waseda University,
\newline \indent 3-4-1, Okubo, Shinjuku-ku, Tokyo 169-8555, Japan.
\newline \indent Institute of Mathematics and Informatics, Bulgarian Academy of Sciences, Acad. Georgi Bonchev Str., Block 8, 1113 Sofia, Bulgaria.}
\email{georgiev@dm.unipi.it}
\address{Tianxiang Gou
\newline \indent School of Mathematics and Statistics, Xi'an Jiaotong University,
\newline \indent Xi'an, Shaanxi 710049, People's Republic of China.}
\email{tianxiang.gou@xjtu.edu.cn}
\begin{document}

\begin{abstract}
In this paper, we are concerned with solutions to the Cauchy problem for Chern-Simons-Schr\"odinger equations in the mass supercritical case. First we establish the local well-posedness of solutions in the radial space.  Then we consider scattering versus blow-up dichotomy for radial data below the ground state threshold.
\medskip

{\noindent \textsc{Keywords}:} Chern-Simons-Schr\"odinger equations; Well-posedness; Scattering; Blow-up.

\medskip
{\noindent \textsc{2010 Mathematics Subject Classification:}} 35Q55; 35P25; 35B44.

\end{abstract}

\maketitle

\section{Introduction}

\reqnomode

In this paper, we are interested in solutions to the Cauchy problem for the following Chern-Simons-Schr\"odinger (CSS) equation in two spatial dimensions,
\begin{align} \label{CSS}
\left\{
\begin{aligned} 
\textnormal{i} \, D_t u+ \left(D_1 D_1 +D_2 D_2\right) u &= - |u|^{p-1} u,\\
\partial_t A_1-\partial_1 A_0&=-\mbox{Im}\left(\overline{u} \,D_2 u\right), \\
\partial_t A_2-\partial_2 A_0&=\mbox{Im}\left(\overline{u} \,D_1 u\right), \\
\partial_1 A_2-\partial_2 A_1&=- 1/2 \,|u|^2,
\end{aligned}
\right.
\end{align}
under the Coulomb gauge condition
\begin{align} \label{gauge}
\partial_1 A_1 + \partial_2 A_2 =0,
\end{align}
where $\textnormal{i}$ denotes the imaginary unit, $p>3$ and
$D_t, D_{i}$ are covariant derivative operators defined by
$$
D_{t}=\partial_{t} + \textnormal{i} \,  A_{0}, \quad D_{i}=\partial_{i} + \textnormal{i} \,  A_{i}  \quad \mbox{for} \,\,\, i=1, 2.
$$
The CSS equation under consideration is a gauge-covariant nonlinear Schr\"odinger equation with a long-range electromagnetic field introduced by Jackiw and Pi in \cite{JaPi1, JaPi2}.  It serves as a basic model for Chern-Simons dynamics on the plane used to describe a number of important planar phenomena such as anyonic statistics, fractional quantum Hall effect and high $T_c$ superconductors, see for example \cite{Du, EHA, EHA1, HoZh, Ya} and references therein.


Here we shall consider solutions to the Cauchy problem for \eqref{CSS} in the radial symmetric context. We suppose that $u(t, x)$ is radially symmetric and the electromagnetic potentials satisfy the following radially symmetric ansatz,
$$
A_0(t,x) = A_0(t,|x|),  \quad A_1(t,x) = -\frac{x_2}{|x|} h(t,|x|), \quad A_2(t,x) = \frac{x_1}{|x|} h(t,|x|),
$$
where $h$ is assumed to be radially symmetric.

The first aim of the present paper is to study the local well-posedness of solutions to the Cauchy problem for \eqref{CSS}. Let us now briefly review some related results in this direction. When $p=3$, the Cauchy problem for \eqref{CSS} was initially investigated in \cite{BBS}, where the authors established the local well-posedness of solutions in $H^2(\R^2)$ and also proved the existence of global solutions in $H^1(\R^2)$ for initial data with small $L^2$-norm. However, the uniqueness of solutions and the continuous dependence of solutions with respect to initial data in $H^1(\R^2)$ are unknown. Later, the author in \cite{Huh} obtained the unconditional uniqueness of solutions in $L^{\infty}_t H^1$. Recently, it was shown in \cite{Lim} that the solution map is Lipschitz continuous for  initial data in $H^s(\R^2)$ for any $s \geq 1$. While $p>3$, the local well-posedness of solutions in $H^1(\R^2)$ has not been treated yet. This is the motivation of our study and the result reads as follows.

\begin{thm} \label{thm.wellposedness}
Let $p>3$, then, for any $R >0$, there exists a constant $T_{\textnormal{max}}=T_{\textnormal{max}}(R)>0$ such that \eqref{CSS} has a unique solution
$$
u \in C^1([0,T_{\textnormal{max}}), L^2_{rad}(\mathbb{R}^2))\cap C([0,T_{\textnormal{max}}), H^1_{rad}(\mathbb{R}^2)) \ \  
$$
provided initial datum
$$
u_0 \in H^1_{rad}(\mathbb{R}^2), \ \ \|u_0\|_{H^1(\mathbb{R}^2)} \leq R.
$$
Moreover, we have the following properties.
\begin{itemize}
\item[(i)] The mass and energy are conserved, i.e. for any $t \in [0, T_{\textnormal{max}})$,
$$
M(u(t))=M(u_0), \quad E(u(t))=E(u_0),
$$
where
\begin{align*}
M(u)= \int_{\R^2} |u|^2 \, dx
\end{align*}
and
\begin{align*}
E(u)= \frac 12 \int_{\R^2} |D_1 u|^2 + |D_2 u|^2 \, dx - \frac{1}{p+1} \int_{\R^2} |u|^{p+1} \, dx.
\end{align*}
\item[(ii)] The solution map
$$
u_0 \in H^1_{rad}(\mathbb{R}^2) \mapsto u  \in  C^1([0,T_{\textnormal{max}}), L^2_{rad}(\mathbb{R}^2))\cap C([0,T_{\textnormal{max}}), H^1_{rad}(\mathbb{R}^2))
$$
is Lipschitz continuous in the ball of radius $R$ in $H^1_{rad}(\mathbb{R}^2)$.
\item[(iii)] Blow-up alternative holds: either $T_{\textnormal{max}}= +\infty$ or $\|\nabla u(t)\|_{L^2(\R^2)}=+\infty$ as $t \to T_{\textnormal{max}}^-$.
\end{itemize}
\end{thm}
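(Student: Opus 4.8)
The plan is to first reduce the full gauge system to a single scalar equation. Under the Coulomb gauge \eqref{gauge} and the radial ansatz, the fourth equation in \eqref{CSS} becomes $\tfrac1r\pa_r(rh)=-\tfrac12|u|^2$, which determines $h$ explicitly as the nonlocal functional
\[
h[u](r)=-\frac{1}{2r}\int_0^r s\,|u(s)|^2\,ds .
\]
Projecting the first two gauge equations onto the radial direction and using that $u$ is radial, one finds $\pa_r A_0=h\,|u|^2$, so that, after normalizing $A_0(\infty)=0$,
\[
A_0[u](r)=-\int_r^\infty h[u](s)\,|u(s)|^2\,ds .
\]
Since for radial $u$ the drift term $2\mathrm{i}\,\vec A\cdot\nabla u$ vanishes and $|\vec A|^2=h^2$, the covariant Laplacian reduces to $\De u-h^2u$, and \eqref{CSS} collapses to the scalar nonlocal Schr\"odinger equation
\[
\mathrm{i}\,\pa_t u+\De u=\big(A_0[u]+h[u]^2\big)u-|u|^{p-1}u,\qquad u(0)=u_0 .
\]
The whole theorem is then a statement about local well-posedness of this reduced equation in $H^1_{rad}(\R^2)$, and I would work with its Duhamel formulation $u=e^{\mathrm{i}t\De}u_0-\mathrm{i}\int_0^t e^{\mathrm{i}(t-s)\De}N[u]\,ds$, where $N[u]:=(A_0[u]+h[u]^2)u-|u|^{p-1}u$.

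Existence and uniqueness I would obtain by a contraction mapping in a space $X_T=C([0,T];H^1_{rad})\cap L^q([0,T];W^{1,r})$ built from an admissible Strichartz pair $(q,r)$ in dimension two (possibly using radially improved estimates to accommodate large $p$), restricting to the ball of radius $\sim R$ and choosing $T=T_{\max}(R)$ small. Two families of nonlinear estimates are needed. For the power term $|u|^{p-1}u$, since $p>3$ is finite the problem is energy-subcritical in $\R^2$ ($H^1\hookrightarrow L^\sigma$ for every $\sigma<\infty$), so standard fractional-Leibniz and H\"older arguments give a locally Lipschitz bound in the Strichartz-dual norm at the cost of a positive power of $T$. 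The genuinely new point is the nonlocal gauge term $(A_0[u]+h[u]^2)u$. Here the key structural fact is that both functionals are controlled by the conserved mass: $|h[u](r)|\lesssim \|u\|_{L^2}^2\,r^{-1}$ and $|A_0[u](r)|\lesssim \|u\|_{L^2}^4\,r^{-2}$, so that $A_0[u]+h[u]^2$ behaves like an inverse-square potential whose strength is measured by the (subcritical, conserved) mass and whose coefficient is the \emph{local} mass $\int_0^r s|u|^2\,ds$, which vanishes at the origin. I expect this to be the main obstacle: in two dimensions the Hardy inequality $\int|u|^2/r^2\lesssim\int|\nabla u|^2$ fails, so one cannot simply absorb the $r^{-2}$ weight, and the estimates must instead exploit the radial Sobolev/Strauss decay together with the vanishing of the local mass near $r=0$ to prove that $u\mapsto(A_0[u]+h[u]^2)u$ is locally Lipschitz from the ball into the relevant dual Strichartz space, again with a gain in $T$.

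Granting these estimates, the map $\Phi(u)=e^{\mathrm{i}t\De}u_0-\mathrm{i}\int_0^t e^{\mathrm{i}(t-s)\De}N[u]\,ds$ is a contraction on the ball for $T$ depending only on $R=\|u_0\|_{H^1}$, yielding the unique fixed point $u\in C([0,T_{\max});H^1_{rad})$. Applying the same difference estimates to two solutions with data in the ball immediately gives the Lipschitz continuity of the solution map, which is part (ii). The continuity in time and the differentiated equation $\pa_t u=\mathrm{i}\De u-\mathrm{i}N[u]$ are read off from the Duhamel formula (the time derivative being interpreted in the natural dual space), which yields the asserted time regularity. For the conservation laws in (i), I would first verify them for smooth approximating data: pairing the equation with $\bar u$ and taking imaginary parts annihilates the real potential and power terms and gives $\tfrac{d}{dt}\|u\|_{L^2}^2=0$, while pairing with $\pa_t\bar u$ and taking real parts, using the gauge structure of $A_0[u]+h[u]^2$, gives $\tfrac{d}{dt}E(u)=0$; the identities then pass to the limit by the continuous dependence just established.

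Finally, the blow-up alternative (iii) is a consequence of the local time depending only on the $H^1$-norm together with mass conservation. If $T_{\max}<\infty$ but $\|\nabla u(t)\|_{L^2}$ stayed bounded along some sequence $t\to T_{\max}^-$, then, since $\|u(t)\|_{L^2}=\|u_0\|_{L^2}$ is constant, $\|u(t)\|_{H^1}$ would remain bounded by some $R'$; restarting the local theory from such a $t$ with uniform existence time $T_{\max}(R')$ would continue the solution past $T_{\max}$, contradicting maximality. Hence $\|\nabla u(t)\|_{L^2}\to+\infty$ as $t\to T_{\max}^-$. I reiterate that the crux of the whole argument is the nonlocal gauge term: controlling the inverse-square contribution of $A_0[u]+h[u]^2$ in a regime where the two-dimensional Hardy inequality is unavailable, which is exactly where the radial symmetry and the mass-subcriticality of these terms become decisive.
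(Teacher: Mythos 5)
Your overall skeleton coincides with the paper's proof: the same reduction of \eqref{CSS} under the radial ansatz to the scalar nonlocal equation \eqref{NLS1} (your $h[u]^2$ and $A_0[u]$ are the paper's $\left(A_\theta(|u|^2)/r\right)^2$ and $A_0(|u|^2)$), the same Duhamel/contraction scheme in a Strichartz-type space (the paper uses the radially improved endpoint norm $\|u\|_{L^2_t L^\infty_x}+\|u\|_{L^\infty_t L^2_x}$ applied to both $u$ and $\partial_r u$), and the same standard endgame for parts (i)--(iii). But there is a genuine gap exactly at the point you yourself flag as the crux: you never prove that $u\mapsto (A_0[u]+h[u]^2)u$ is locally Lipschitz into a dual Strichartz space with a gain in $T$; you only assert that such an estimate ``must'' be established by exploiting Strauss decay and the vanishing of the local mass. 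That estimate is the actual technical content of the paper's Section \ref{proof1} (Lemma \ref{lem.ineq}, Lemma \ref{lem.bf1}, Corollary \ref{cor.a01}, culminating in the key bound \eqref{eq.ss3}), and without it the contraction argument cannot be run. Note also that the working norm must control $\partial_r u$ (your $X_T$ has a $W^{1,r}$ component as well), so one needs Lipschitz bounds for $\partial_r \Lambda_1(u)$ and $\partial_r \Lambda_2(u)$ too, which bring in $\partial_r A_0 = \frac{A_\theta}{r}|u|^2$ and $\partial_r A_\theta = -\frac{r}{2}|u|^2$; none of this is addressed.

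Moreover, your diagnosis of the obstacle is misdirected. The pointwise bounds $|h[u]|\lesssim \|u\|_{L^2}^2\, r^{-1}$ and $|A_0[u]|\lesssim \|u\|_{L^2}^4\, r^{-2}$ are lossy near the origin, and the failure of the two-dimensional Hardy inequality never enters: at no point does one need to absorb an inverse-square weight against $\|\nabla u\|_{L^2}^2$. The point of the paper's Lemma \ref{lem.ineq} is the pointwise maximal-function bound $A_\theta(f)/r^2 \lesssim M(f)$, which after interpolation gives, e.g., $\left\|A_\theta(|u|^2)/r\right\|_{L^\infty}\lesssim \|u\|_{L^4}^2$ (see \eqref{eq.linb11}) and $\left\|A_0(|u|^2)\right\|_{L^\infty}\lesssim \|u\|_{L^4}^4$: the potentials are \emph{bounded} functions whose norms are controlled by powers of $\|u\|_{H^1}$, so the nonlocal terms are estimated by H\"older exactly like a bounded potential times $u$, with the factor $T^{3/4}$ (or $T$) coming from time integration. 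The Hardy-type input that is genuinely needed (Muckenhoupt's weighted inequality \eqref{eq.M1.41}) is applied to the tail integral defining $A_0$, i.e.\ to the potential itself, not to $u$. So the step you leave open is not unprovable --- it is precisely what the paper proves --- but your proposal neither supplies it nor points toward the mechanism (maximal function plus weighted Hardy for the potential) that makes it work.
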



To prove Theorem \ref{thm.wellposedness}, we first establish estimates for the electromagnetic potentials. Then we employ the contraction mapping principle along with Strichartz-type estimates to derive the desired results.

We next discuss dynamical behaviors of solutions to the Cauchy problem for \eqref{CSS} in the mass supercritical case, which has not been dealt with so far. Concerning this topic in the mass critical case, we refer the readers to \cite{BBS, Hu1, KKO} for finite time blow-up of solutions and \cite{LL, LS16, OhPu} for scattering of solutions. Let us mention that the authors in \cite{LS16} obtained the scattering of solutions in $L^2(\R^2)$ by applying the concentration compactness arguments due to Kenig and Merle \cite{KM1, KM2}.

Before stating our result in this direction, we need to introduce some notations. The underlying energy functional is defined by
$$
S(u) = E(u) + \frac 12 M(u).
$$
The ground state energy is defined by
\begin{align*}
d= \inf \left\{ S(u): u \in H^1_{rad}(\R^2)  \backslash \{0\}, \, K(u)=0\right\},
\end{align*}
where
$$
K(u)= \int_{\R^2} |D_1 u|^2 + |D_2 u|^2 \, dx - \frac{p-1}{p+1} \int_{\R^2} |u|^{p+1} \, dx.
$$
For $p>3$, we are concerned with dynamical behaviors of solutions to the Cauchy problem for \eqref{CSS} with initial data starting from the following two sets,
$$
\mathcal{K}^+= \left\{u  \in H^1_{rad}(\R^2) : S(u) <d, \, K(u)  >0\right\}
$$
and
$$
\, \, \, \mathcal{K}^-= \left\{u  \in H^1_{rad}(\R^2) : S(u) <d, \, K(u) <0\right\}.
$$

\begin{thm} \label{thm.scattering1}
Let $p>3$ and let $u$ be the solution to the Cauchy problem for \eqref{CSS} with initial datum $u_0 \in H^1_{rad}(\R^2)$.
\begin{itemize}
\item[i)] If $u_0 \in \mathcal{K}^+$, then $u$ exists globally in time and scatters in $H^1_{rad}(\R^2)$, i.e. there exist $u_{\pm} \in H_{rad}^1(\R^2)$ such that
$$
\lim_{t \to \pm \infty} \|u(t)- e^{\textnormal{i} \Delta t} u_{\pm}\|_{H^1(\R^2)} =0.
$$
\item[ii)] If $u_0 \in \mathcal{K}^-$ and $p \leq 5$, then $u$ blows up in finite time.
\end{itemize}
\end{thm}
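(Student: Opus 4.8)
The plan is to combine the Payne--Sattinger variational calculus with the Kenig--Merle concentration--compactness/rigidity method (for scattering) and a localized virial identity (for blow-up). Everything rests on the observation that $K$ is the derivative of $S$ along the $L^2$-invariant scaling $u_\lambda(x)=e^{\lambda}u(e^{\lambda}x)$. Since the self-consistent potential obeys $A_j[u_\lambda](x)=e^{\lambda}A_j[u](e^{\lambda}x)$, the covariant kinetic energy scales as $\int_{\R^2}|D_1u_\lambda|^2+|D_2u_\lambda|^2\,dx=e^{2\lambda}\int_{\R^2}|D_1u|^2+|D_2u|^2\,dx$, while $\int_{\R^2}|u_\lambda|^{p+1}\,dx=e^{(p-1)\lambda}\int_{\R^2}|u|^{p+1}\,dx$, so that $\frac{d}{d\lambda}S(u_\lambda)\big|_{\lambda=0}=K(u)$. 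From this I would extract the identity
$$S(u)=\frac{1}{p-1}K(u)+\frac{p-3}{2(p-1)}\int_{\R^2}|D_1u|^2+|D_2u|^2\,dx+\frac12 M(u)$$
and the fact that $G(\lambda):=S(u_\lambda)-\frac{1}{p-1}K(u_\lambda)$ is nondecreasing in $\lambda$ (its derivative equals $\frac{p-3}{p-1}e^{2\lambda}\int_{\R^2}|D_1u|^2+|D_2u|^2\,dx\ge0$ because $p>3$). Evaluating $G$ at the scaling parameter where $K$ vanishes and comparing with the definition of $d$ gives the sign-rigidity package: the sets $\mathcal K^{\pm}$ are invariant under the flow; on $\mathcal K^+$ one has the coercivity $\frac{p-3}{2(p-1)}\int_{\R^2}|D_1u|^2+|D_2u|^2\,dx\le S(u)<d$; and on $\mathcal K^-$ one has $K(u)\le-(p-1)\big(d-S(u)\big)<0$.

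For part i) the coercive bound, together with mass conservation and the radial estimates for the gauge potential (which bound $\int_{\R^2}|A_j|^2|u|^2\,dx$ by $M(u)$ and the covariant kinetic energy), controls $\|\nabla u(t)\|_{L^2}$ uniformly in $t$; the blow-up alternative of Theorem~\ref{thm.wellposedness}(iii) then yields global existence. To pass from global existence to scattering I would run the Kenig--Merle scheme: establish small-data scattering and a stability/perturbation theory in the relevant Strichartz space; assume scattering fails at some critical level $S_c\le d$ and, through a linear profile decomposition adapted to $e^{\textnormal{i}\Delta t}$, extract a minimal nonscattering critical element whose orbit is precompact in $H^1_{rad}(\R^2)$ modulo the symmetries; and finally exclude this element by a covariant virial/Morawetz rigidity argument, using that precompactness forces spatial localization while the virial derivative is governed by $K>0$.

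For part ii) I would exploit the uniform negativity $K(u(t))\le-(p-1)\big(d-S(u_0)\big)=:-\delta<0$, valid for all $t$ by invariance of $\mathcal K^-$ and conservation of $S$. Since radial $H^1$ data need not have finite variance, I would use the localized virial functional $V_R(t)=\int_{\R^2}\psi_R(x)|u(t)|^2\,dx$ with $\psi_R(x)=R^2\psi(|x|/R)$, $\psi(r)=r^2$ near the origin and $\psi''$ bounded. Differentiating twice gives $V_R''(t)=8K(u(t))+\mathcal E_R(t)$, where $\mathcal E_R$ is supported in $\{|x|\ge R\}$ and collects the kinetic, nonlinear and Chern--Simons contributions of the cutoff. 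The two-dimensional radial Strauss inequality $\|u\|_{L^\infty(|x|\ge R)}^2\lesssim R^{-1}\|u\|_{L^2}\|\nabla u\|_{L^2}$ shows that the dangerous nonlinear part of $\mathcal E_R$ behaves like $R^{-(p-1)/2}\|\nabla u\|_{L^2}^{(p-1)/2}$, which can be absorbed into the main term exactly when $(p-1)/2\le2$, i.e. when $p\le5$. Choosing $R$ large then produces $V_R''(t)\le-4\delta<0$ for all $t$, and since $V_R\ge0$ this forces $T_{\mathrm{max}}<\infty$.

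A word on the difficulties. Once the scaling identities are in hand, the variational and blow-up steps closely follow the NLS template, and the restriction $p\le5$ is precisely what the radial localized-virial error tolerates. The genuinely hard part is the scattering statement, where the entire concentration--compactness machinery must be rebuilt for the \emph{nonlocal, gauge-covariant} nonlinearity: this complicates the profile decomposition and the orthogonality of nonlinear interactions, the stability theory (because both $|D_1u|^2+|D_2u|^2$ and the potentials $A_j$ depend quadratically and nonlocally on $u$), and the rigidity step, whose covariant virial identity carries extra magnetic terms whose sign must be controlled. The recurring technical thread is the uniform comparison, throughout all these steps, between the covariant derivatives $D_ju$ and the ordinary derivatives $\nabla u$.
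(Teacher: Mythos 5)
Your variational package (the scaling identities, the invariance of $\mathcal{K}^{\pm}$, the coercivity bound $\tfrac{p-3}{2(p-1)}\left(\|D_1u\|_{L^2}^2+\|D_2u\|_{L^2}^2\right)\le S(u)<d$ on $\mathcal{K}^+$) and your global-existence argument coincide with the paper's. The two substantive claims, however, each contain a genuine gap.

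For scattering, you propose the Kenig--Merle concentration--compactness/rigidity scheme, which is precisely the machinery the paper deliberately avoids: following Arora--Dodson--Murphy, the paper proves a Morawetz-type spacetime bound (Lemma \ref{lem.stest}), localizes it to a well-chosen interval $[t_0,t_1]$ (Corollary \ref{cor.sm}), and then converts that smallness, via dispersive and Strichartz estimates applied separately to $\Lambda_1,\Lambda_2,\Lambda_3$, into smallness of $\|u\|_{L^q([t_1,\infty);L^r(\mathbb{R}^2))}$ for exponents in the triangle $\mathfrak{T}_s$, which yields scattering directly. Your route leaves all of its hard components unexecuted: you do not construct a profile decomposition compatible with the nonlocal potentials $A_\theta(|u|^2)$ and $A_0(|u|^2)$ (the decoupling of these quadratic nonlocal terms along weakly interacting profiles is exactly what makes the gauge-covariant setting difficult), you do not build the stability theory, and you do not carry out the rigidity argument; you only name these as difficulties. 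Since no such concentration--compactness framework exists for the mass-supercritical CSS equation, this is a program rather than a proof.

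For blow-up, you follow the same outline as the paper (localized virial plus uniform negativity of $K$), but the absorption step fails as stated. After Young's inequality the cutoff error becomes $\eta\|\nabla u(t)\|_{L^2}^2 + C_\eta R^{-\gamma}$ (this is where $p\le5$ enters), and you claim it is absorbed by the main term because $K(u(t))\le-\delta<0$. But on $\mathcal{K}^-$ the kinetic energy is not a priori bounded --- blow-up is what you are trying to prove --- and a constant negative bound on $K$ cannot dominate $\eta\|\nabla u\|_{L^2}^2$: since $K$ is a difference of two possibly huge terms, $K\le-\delta$ is compatible with arbitrarily large kinetic energy. The paper closes this step with two ingredients your sketch lacks: the bound \eqref{lbdd}, which converts the kinetic error into a multiple of $\|u\|_{L^{p+1}}^{p+1}$ plus a constant, and the equivalent variational characterization of $d$ through $L(u)=S(u)-\tfrac12K(u)$ (Lemma \ref{lem.ide}), which yields the lower bound \eqref{ineq.p}, namely $\tfrac{p-3}{p+1}\|u\|_{L^{p+1}}^{p+1}\ge 2d-\|u\|_{L^2}^2$. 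Only after substituting both does the coefficient of $\|u\|_{L^{p+1}}^{p+1}$ become sign-definite, giving $\tfrac{d}{dt}V_{\chi_R}[u]\lesssim-2\delta d$ and hence finite-time blow-up; without Lemma \ref{lem.ide} (or an equivalent substitute), your inequality $V_R''(t)\le-4\delta$ does not follow. A minor additional point: your claimed bound $K(u)\le-(p-1)\bigl(d-S(u_0)\bigr)$ is asserted without derivation; the concavity argument in the paper gives $K(u)\le S(u_0)-d$, which suffices, but the constant $p-1$ is unsubstantiated.
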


Let us now sketch the strategy to prove Theorem \ref{thm.scattering1}.  The global existence of solutions follows directly from the conservation laws and Gagliardo-Nirenberg type inequality. The proof of scattering of solutions is mainly inspired by the approach developed in \cite{ADM}, which avoids the use of the concentration-compactness-rigidity arguments in the spirit of Kenig and Merle \cite{KM1, KM2}. The key ingredient is to establish Morawetz-type estimates for solutions, from which we are able to deduce the scattering of solutions. In \cite{ADM}, the authors considered the following classical NLS in two space dimensions,
$$
(\textnormal{i} \, \partial_t + \Delta )u= -|u|^{p-1}u.
$$
In view of Morawetz-type estimates, the authors obtained that
$$
\int_{t_0}^{t_1} \int_{\R^2} |u|^{p+1} \, dx dt \lesssim \eps
$$
for a suitable large time interval $[t_0,t_1]$. From the smallness of the term
\begin{align*}
\left\|u\right\|_{L^{p+1}\left([t_1,\infty); L^{p+1}(\mathbb{R}^{2})\right)},
\end{align*}
then the scattering of solutions follows. In our case, the discussion of scattering of solutions becomes complex and the nonlocal terms require specific treatments. First we need to derive that
\begin{align*}
\int_{t_0}^{t_1} \int_{\R^2} |u|^{p+1} \, dx dt +\int_{t_0}^{t_1} \int_{\R^2} \left(\frac{A_{\theta}(|u|^2)}{r}\right)^2 |u|^2  + \int_{t_0}^{t_1} \int_{\R^2} A_0(|u|^2)|u|^2 \, dx dt \lesssim \eps,
\end{align*}
where $r=|x|$, $A_{\theta}$ and $A_0$ are electromagnetic potentials defined by \eqref{eq.ss2a}. Next we have to establish the smallness of the term
\begin{align*}
\left\|u\right\|_{L^{q}\left([t_1,\infty); L^r(\mathbb{R}^{2})\right)}
\end{align*}
for the parameters $q, r>2$ in the whole triangle
$$
\mathfrak{T}:= \left\{\left(\frac{1}{q}, \frac{1}{r}\right) : 0 < \frac{1}{q}+\frac{1}{r} \leq \frac{1}{2}, \ 2 < q,r < \infty \right\}.
$$
Thus we have the scattering of solutions. The proof of blow-up of solutions benefits from variational characterization of the ground state energy and analysis of the evolution of a localized virial quantity.

\subsection*{Structure of the Paper} This paper is organized as follows. In Section \ref{pre}, we present some preliminary results used to prove our main results. In Section \ref{proof1}, we first establish useful estimates of the electromagnetic potentials $A_{\theta}$ and $A_0$, we then prove Theorem \ref{thm.wellposedness}. Section \ref{proof23} is devoted to the proof of Theorem \ref{thm.scattering1}. In Appendix, we demonstrate that the smallness of initial data implies the smallness of solutions and we also establish a crucial estimate for a Nemytskii operator.

\subsection*{Notations} \textit{Throughout the paper, for any $1 \leq q< + \infty$, we denote by $L^q(\R^2)$ the Lebesgue space equipped with the norm
$$
\|u\|_{L^q(\R^2)}=\left(\int_{\R^2} |u|^q \, dx\right)^{\frac 1 q}.
$$
For any $I \subset \R$ and $1 \leq q, r \leq \infty$, we write
$$
\|u\|_{L^q(I, L^r(\R^2))}=\left\|\|u(t)\|_{L^r(\R^2)}\right\|_{L^q(I)}.
$$
We denote by $H^1(\R^2)$ the usual Sobolev space equipped with the standard norm
$$
\|u\|_{H^1(\R^2)}=\|u\|_{L^2(\R^2)}+ \|\nabla u\|_{L^2(\R^2)}.
$$
In addition, $H^1_{rad} (\R^2)$ stands for the subspace of $H^1(\R^2)$, which consists of the radially symmetric functions in $H^1(\R^2)$. For any nonnegative quantities $A, B$, we use the notations $A \lesssim B$  to denote $A \leq C B$ for some $C > 0$.
}

\section{Preliminaries} \label{pre}

In this section, we present some preliminary results used to establish our main results. Let us first show the well-known Gagliardo-Nirenberg inequality in $H^1(\R^2)$,
\begin{align} \label{GN}
\|u\|_{L^q(\R^2)}  \lesssim \|\nabla u\|_{L^2(\R^2)}^{\alpha} \|u\|_{L^2(\R^2)}^{1- \alpha} \quad \mbox{for} \, \, \alpha=1-\frac 2q.
\end{align}
Let us also give the Strauss inequality in $H^1_{rad}(\R^2)$,
\begin{align} \label{Strauss}
\left\||x|^{\frac 12} u \right\|_{L^{\infty}(\R^2)} \lesssim \|u\|_{H^1(\R^2)}.
\end{align}

\begin{lem} \cite[Theorem 7.21]{Lieb} \label{lem.diaineq}
Let $A: \R^n \to \R^n$ be in $L_{loc}^2(\R^n)$ and let $u$ be in $H^1_A$, where
$$
H^1_A=\left\{ u \in L^2(\R^n): \|\left(\nabla + \textnormal{i} A\right) u \|_{L^2(\R^2)} < \infty \right\},
$$
then $|u| \in H^1(\R^n)$ and the diamagnetic inequality
$$
|\nabla |u|(x)| \leq |(\nabla + \textnormal{i} A) u(x)|
$$
holds pointwise for almost every $x \in \R^n$.
\end{lem}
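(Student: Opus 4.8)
The plan is to prove both assertions---that $|u| \in H^1(\R^n)$ and the pointwise bound---by a single regularization argument, so that the inequality ultimately reduces to the Cauchy--Schwarz inequality applied to a regularized modulus. Before that I would record that $|u|$ genuinely has a distributional gradient of sufficient integrability: writing $\nabla u = (\nabla + \mathrm{i} A) u - \mathrm{i} A u$ and using $(\nabla + \mathrm{i} A) u \in L^2(\R^n)$, $A \in L^2_{loc}(\R^n)$ and $u \in L^2(\R^n)$, the product $A u$ lies in $L^1_{loc}(\R^n)$, whence $\nabla u \in L^1_{loc}(\R^n)$ and $u \in W^{1,1}_{loc}(\R^n)$. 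This is exactly what makes the chain-rule manipulations below legitimate.

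Next I would regularize the modulus: for $\eps > 0$ set $|u|_\eps := (|u|^2 + \eps^2)^{1/2}$. Since $t \mapsto (t^2 + \eps^2)^{1/2}$ is smooth with bounded derivative, the chain rule for Sobolev functions yields
$$
\nabla |u|_\eps = \frac{\mathrm{Re}\big(\overline{u}\, \nabla u\big)}{|u|_\eps} = \frac{\mathrm{Re}\big(\overline{u}\,(\nabla + \mathrm{i} A) u\big)}{|u|_\eps},
$$
where the second equality uses $\mathrm{Re}(\overline{u} \cdot \mathrm{i} A u) = \mathrm{Re}(\mathrm{i} A |u|^2) = 0$, valid because $A$ is real-valued. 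The key pointwise bound then drops out of $|u| \le |u|_\eps$ and Cauchy--Schwarz:
$$
\big|\nabla |u|_\eps\big| \le \frac{|u|\,\big|(\nabla + \mathrm{i} A) u\big|}{|u|_\eps} \le \big|(\nabla + \mathrm{i} A) u\big| \quad \text{a.e. in } \R^n .
$$

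To remove the regularization I would work with $|u|_\eps - \eps$, which satisfies $0 \le |u|_\eps - \eps \le |u|$, hence lies in $L^2(\R^n)$ and converges to $|u|$ in $L^2(\R^n)$ by dominated convergence, while the gradients $\nabla |u|_\eps$ stay uniformly bounded in $L^2(\R^n)$ by the previous display. Extracting a weakly convergent subsequence $\nabla |u|_\eps \wto g$ in $L^2(\R^n)$ and invoking closedness of the distributional gradient identifies $g = \nabla |u|$, so $|u| \in H^1(\R^n)$. For the pointwise inequality I would split $\R^n$ into $\{u \ne 0\}$ and $\{u = 0\}$: on $\{u \ne 0\}$ one passes to the limit in the identity for $\nabla |u|_\eps$ directly to obtain $\nabla |u| = \mathrm{Re}(\overline{u}(\nabla + \mathrm{i} A)u)/|u|$, and the bound survives; on $\{u = 0\}$ both sides vanish almost everywhere---the left side because the gradient of a Sobolev function is zero a.e. on any of its level sets, and the right side because the same fact applied to $\mathrm{Re}\,u$ and $\mathrm{Im}\,u$ forces $\nabla u = 0$ there while $A u = 0$.

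The step I expect to be the main obstacle is precisely the behaviour on $\{u = 0\}$: there the phase of $u$ is undefined, so the formal computation $(\nabla + \mathrm{i} A) u = e^{\mathrm{i}\phi}\big(\nabla |u| + \mathrm{i} |u|(\nabla \phi + A)\big)$ that makes the inequality transparent breaks down. The $\eps$-regularization is designed to bypass this pathological set, with the vanishing of Sobolev gradients on level sets serving as the clean replacement for the phase argument. A minor but genuine auxiliary point is the identification of the weak limit of $\nabla|u|_\eps$ with $\nabla|u|$, since weak convergence alone does not transmit the pointwise bound; this is why the case distinction above, rather than a naive limit of the inequality itself, is the efficient route to the conclusion.
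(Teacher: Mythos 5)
The paper offers no proof of this lemma at all: it is quoted verbatim, with attribution, from Lieb--Loss \cite[Theorem 7.21]{Lieb}. Your argument is correct and is essentially the proof given in that cited source --- the regularization $\sqrt{|u|^2+\varepsilon^2}$, the chain rule for $W^{1,1}_{loc}$ functions, the observation that $\mathrm{Re}(\overline{u}\,\mathrm{i}Au)=0$ because $A$ is real, and the dominated/weak convergence passage identifying $\nabla|u|$ with $\mathrm{Re}\bigl(\mathrm{sgn}(\overline{u})(\nabla+\mathrm{i}A)u\bigr)$ are exactly the textbook steps, so there is nothing to correct and no divergence from the reference to report.
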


As an immediate consequence of \eqref{GN} and Lemma \ref{lem.diaineq}, we have that
\begin{align} \label{MGN}
\begin{split}
\|u\|_{L^q(\R^2)} & \lesssim \|\nabla |u|\|_{L^2(\R^2)}^{\frac{q-2}{q}} \|u\|_{L^2(\R^2)}^{\frac 2 q} \\
& \lesssim \left( \|D_1 u\|^2_{L^2(\R^2)} + \|D_2 u\|^2_{L^2(\R^2)}\right)^{{\frac{q-2}{2q}}} \|u\|_{L^2(\R^2)}^{\frac{2}{q}}.
\end{split}
\end{align}

\begin{defi}
A couple $(q,r)$ of positive numbers $q \geq 2$ and $r \geq2 $ is called $2d$ Schr\"{o}dinger admissible,
if it satisfies $(q,r) \neq (2,\infty) $ and
\begin{align*}
\frac{1}{q}+\frac{1}{r}=\frac{1}{2}. 
\end{align*}
\end{defi}

\begin{lem} {\label {lem2.1}} (\cite{KeTao98, S01, T00})
\rm{(Homogeneous Strichartz Estimates)}
There hold the following estimates.
\begin{itemize}
\item[i)] If $(p,q)$ is a $2d$ Schr\"{o}dinger admissible couple, then
\begin{align*}
\left\|e^{i t\Delta}f\right\|_{L^{q}(\mathbb{R};L^{r}(\mathbb{R}^{2}))}\lesssim\|f\|_{L^{2}(\mathbb{R}^{2})}.
\end{align*}
 \item[ii)] If $f$ is radial, then
  \begin{align*}
\left\|e^{i t\Delta}f\right\|_{L^{2}(\mathbb{R};L^{\infty}(\mathbb{R}^{2}))}\lesssim\|f\|_{L^{2}(\mathbb{R}^{2})}.
\end{align*}
\end{itemize}
\end{lem}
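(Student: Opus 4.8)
The plan is to treat the two parts separately, since i) is the classical non-endpoint estimate while ii) is the radial endpoint, where all the real difficulty lies.

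For part i), I would start from the pointwise-in-time dispersive decay of the free propagator. The kernel of $e^{\textnormal{i}t\Delta}$ in $\R^2$ is $(4\pi\textnormal{i}t)^{-1}e^{\textnormal{i}|x|^2/(4t)}$, which gives at once $\|e^{\textnormal{i}t\Delta}f\|_{L^\infty(\R^2)} \lesssim |t|^{-1}\|f\|_{L^1(\R^2)}$. Interpolating this with the unitarity relation $\|e^{\textnormal{i}t\Delta}f\|_{L^2(\R^2)}=\|f\|_{L^2(\R^2)}$ via Riesz--Thorin produces the whole family of decay estimates
\[
\|e^{\textnormal{i}t\Delta}f\|_{L^r(\R^2)} \lesssim |t|^{-(1-2/r)}\|f\|_{L^{r'}(\R^2)}, \qquad 2 \le r \le \infty.
\]
The estimate in i) is then equivalent, by the standard $TT^*$ duality, to the boundedness of the operator $F \mapsto \int_{\R} e^{\textnormal{i}(t-s)\Delta}F(s)\,ds$ from $L^{q'}(\R;L^{r'}(\R^2))$ to $L^q(\R;L^r(\R^2))$. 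Inserting the decay estimate under the $s$-integral reduces this to the fractional integration bound
\[
\Big\| \int_\R |t-s|^{-(1-2/r)}\|F(s)\|_{L^{r'}(\R^2)}\,ds \Big\|_{L^q(\R)} \lesssim \|F\|_{L^{q'}(\R;L^{r'}(\R^2))},
\]
which is exactly the Hardy--Littlewood--Sobolev inequality once one checks the admissibility relation $1-2/r = 2/q$. Because the endpoint $(q,r)=(2,\infty)$ is excluded we have $2<q<\infty$, so the relevant HLS exponents lie in the open admissible range and no Keel--Tao bilinear endpoint argument is required; the remaining pair $(q,r)=(\infty,2)$ is simply the unitarity of the propagator.

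For part ii), the radial endpoint, a soft argument cannot work: the estimate fails for general data, so the radial symmetry must enter essentially. I would use the explicit radial form of the propagator. Carrying out the angular integration in the kernel representation gives, for radial $f$,
\[
e^{\textnormal{i}t\Delta}f(r) = \frac{1}{2\textnormal{i}t}\int_0^\infty e^{\textnormal{i}(r^2+s^2)/(4t)} J_0\!\left(\frac{rs}{2t}\right) f(s)\, s\, ds,
\]
where $J_0$ is the Bessel function. The point is the decay $|J_0(z)| \lesssim (1+|z|)^{-1/2}$, which encodes exactly the extra half-power of radial decay that is absent in the non-radial case. I would then bound $\|e^{\textnormal{i}t\Delta}f\|_{L^\infty_x}$ by a weighted linear operator applied to $f$ and integrate in $t$, either by diagonalizing $f\mapsto e^{\textnormal{i}t\Delta}f$ through the Hankel transform and applying Plancherel in a rescaled variable, or via a Schur-type $TT^*$ estimate on the resulting time-kernel. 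Alternatively one may follow the spherically-averaged bilinear scheme, proving the estimate for $\big\||e^{\textnormal{i}t\Delta}f|^2\big\|_{L^\infty_x}$ directly.

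The main obstacle is precisely part ii). Unlike part i) it cannot be reduced to interpolation and fractional integration; one must exploit the oscillation and decay of the Bessel kernel quantitatively, and the delicate point is to control the simultaneous singularities near $r=0$ and $t=0$ while integrating in time, which is exactly where the forbidden endpoint is recovered for radial data.
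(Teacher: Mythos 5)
The paper does not prove this lemma at all: it is quoted directly from the cited works of Keel--Tao, Stefanov and Tao \cite{KeTao98, S01, T00}, so there is no internal argument to compare against and your attempt has to stand on its own. Your part i) does stand: the dispersive bound from the explicit kernel, Riesz--Thorin interpolation, the $TT^*$ reduction, and the one-dimensional Hardy--Littlewood--Sobolev inequality in time, with the exponent identity $1-2/r=2/q$ supplied by admissibility and the pair $(q,r)=(\infty,2)$ handled by unitarity, is the complete classical Ginibre--Velo argument for the non-endpoint estimates, and it is correct since excluding $(2,\infty)$ keeps the HLS exponent in the open range $0<2/q<1$.

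Part ii) is where there is a genuine gap: both completions you propose fail at the decisive step. The Hankel-transform/Plancherel route controls the wrong norm. Writing $e^{\textnormal{i}t\Delta}f(r)=\int_0^\infty e^{-\textnormal{i}t\rho^2}J_0(r\rho)g(\rho)\,\rho\,d\rho$ with $g$ the Hankel transform of $f$, and applying Plancherel in $t$ after the substitution $\lambda=\rho^2$, bounds $\sup_r\|e^{\textnormal{i}t\Delta}f(r)\|_{L^2_t}$, i.e. the $L^\infty_r L^2_t$ mixed norm. By Minkowski's integral inequality $\|\cdot\|_{L^\infty_r L^2_t}\leq\|\cdot\|_{L^2_t L^\infty_r}$, so this is the \emph{weaker} estimate; the entire difficulty of the radial endpoint is precisely the interchange of these two norms, which Plancherel in time cannot see. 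The Schur-type $TT^*$ route fails for the same reason the endpoint fails in general: the $TT^*$ operator must map $L^2_t L^1_x\to L^2_t L^\infty_x$, and inserting either the dispersive bound or the pointwise Bessel bound $|J_0(z)|\lesssim(1+|z|)^{-1/2}$ in absolute value produces the time kernel $|t-s|^{-1}$, which is not integrable, so no Schur test or HLS inequality applies. The radial gain cannot be extracted from any pointwise bound on $J_0$; one must exploit the oscillation of the Bessel kernel jointly in space, frequency and time, through a dyadic frequency decomposition and stationary-phase or bilinear analysis, which is exactly the substantive content of the cited papers of Tao and Stefanov. As written, your part ii) correctly names the ingredients but is a plan rather than a proof, and both of its proposed execution paths collapse.
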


\begin{lem} (\cite{KeTao98, S01, T00}) \label {lem2.2}
\rm{(Inhomogeneous  Strichartz Estimates)} Let $(r_{j},q_{j})$ be $2d$ Schr\"{o}dinger admissible couples for $j=1, 2$. Then the following estimates hold.
\begin{itemize}
  \item[i)] There holds that
\begin{align*}
\left\|\int_{-\infty}^t e^{\textnormal{i}(t-s)\Delta}F(s,\cdot)ds\right\|_{L^{q_{2}}(\mathbb{R}, L^{r_{2}}(\mathbb{R}^{2}))}\lesssim\|F\|_{L^{q_{1}^{\prime}}(I , L^{r_{1}^{\prime}}(\mathbb{R}^{2}))}, 
\end{align*}
where $q_{j}^{\prime}, r_{j}^{\prime}$ are the dual exponents of $q_{j}$ and $r_{j}$, respectively.
 \item[ii)] If $F$ is radial, then
\begin{align} \label{eq.r1}
\begin{split}
& \left\|\int_{-\infty}^{\infty} e^{\textnormal{i}(t-s)\Delta}F(s,\cdot)ds\right\|_{L^{2}( \mathbb{R} , L^{\infty}(\mathbb{R}^{2}))}
+ \left\|\int_{-\infty }^{\infty} e^{\textnormal{i}(t-s)\Delta}F(s,\cdot)ds\right\|_{L^{\infty}(\mathbb{R}, L^{2}(\mathbb{R}^{2}))} \\
& \lesssim \inf \left\{\|F\|_{L^{2}(\mathbb{R} , L^{1}(\mathbb{R}^{2}))} , \|F\|_{L^{1}(\mathbb{R} , L^{2}(\mathbb{R}^{2}))}\right\}.
\end{split}
\end{align}
\end{itemize}
\end{lem}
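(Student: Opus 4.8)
The plan is to derive both inhomogeneous estimates from the homogeneous ones in Lemma \ref{lem2.1} by a $TT^{\ast}$/duality argument, handling the non-retarded integral first and the retarded truncation afterwards. The basic structural identity I would exploit is that, writing $A f := e^{\textnormal{i}t\Delta}f$, the adjoint of $A \colon L^2(\mathbb{R}^2) \to L^{q}(\mathbb{R}, L^{r}(\mathbb{R}^2))$ is the map $A^{\ast}F = \int_{\mathbb{R}} e^{-\textnormal{i}s\Delta}F(s)\,ds$, so that the full (non-retarded) inhomogeneous operator factors as $\int_{\mathbb{R}} e^{\textnormal{i}(t-s)\Delta}F(s)\,ds = A(A^{\ast}F)$.

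For part i), I would first establish the estimate for this non-retarded operator. Applying the homogeneous bound of Lemma \ref{lem2.1} i) with the admissible pair $(q_2,r_2)$ to the outer propagator reduces matters to bounding $\|A^{\ast}F\|_{L^2(\mathbb{R}^2)} = \|\int_{\mathbb{R}} e^{-\textnormal{i}s\Delta}F(s)\,ds\|_{L^2(\mathbb{R}^2)}$ by $\|F\|_{L^{q_1'}(\mathbb{R}, L^{r_1'}(\mathbb{R}^2))}$; but this is exactly the dual of the homogeneous estimate for $(q_1,r_1)$, since $(L^{q_1}L^{r_1})^{\ast}=L^{q_1'}L^{r_1'}$ and $(e^{\textnormal{i}t\Delta})^{\ast}=e^{-\textnormal{i}t\Delta}$ on $L^2$. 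Composing yields $\|A A^{\ast}F\|_{L^{q_2}L^{r_2}} \lesssim \|F\|_{L^{q_1'}L^{r_1'}}$. To pass from this to the retarded operator $F \mapsto \int_{-\infty}^{t} e^{\textnormal{i}(t-s)\Delta}F(s)\,ds$ appearing in the statement, I would invoke the Christ--Kiselev lemma, which upgrades a bound on the untruncated operator to its causal time-truncation provided the input time-exponent is strictly smaller than the output one, i.e. $q_1' < q_2$. This is precisely where the main (though mild) subtlety lies: the argument genuinely breaks down at the double endpoint, and it is the exclusion of $(2,\infty)$ from $2d$ admissibility that saves us, since it forces $q_1,q_2>2$ and hence $q_1' < 2 < q_2$. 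The restriction of the input to a subinterval $I$ is then recovered by replacing $F$ with $\mathbf{1}_I F$.

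For part ii) the operator in the statement is already non-retarded, so no Christ--Kiselev step is needed and the argument is shorter and endpoint-safe. Setting $G := A^{\ast}F = \int_{\mathbb{R}} e^{-\textnormal{i}s\Delta}F(s)\,ds$, which is radial whenever $F$ is, I would bound $\|G\|_{L^2(\mathbb{R}^2)}$ by the right-hand side in two complementary ways: the duality identity $(L^2 L^{\infty})^{\ast}=L^2 L^1$ combined with the radial homogeneous estimate of Lemma \ref{lem2.1} ii) gives $\|G\|_{L^2} \lesssim \|F\|_{L^2(\mathbb{R},L^1(\mathbb{R}^2))}$, while Minkowski's inequality together with the unitarity of $e^{-\textnormal{i}s\Delta}$ on $L^2$ gives $\|G\|_{L^2} \leq \int_{\mathbb{R}}\|F(s)\|_{L^2}\,ds = \|F\|_{L^1(\mathbb{R},L^2(\mathbb{R}^2))}$, and together these produce the infimum. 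It then remains to estimate $A G = e^{\textnormal{i}t\Delta}G$ in the two output norms: the radial homogeneous estimate of Lemma \ref{lem2.1} ii) controls $\|e^{\textnormal{i}t\Delta}G\|_{L^2 L^{\infty}}$ by $\|G\|_{L^2}$, while the unitarity of $e^{\textnormal{i}t\Delta}$ gives $\|e^{\textnormal{i}t\Delta}G\|_{L^{\infty}L^2} = \|G\|_{L^2}$. Summing these two and inserting the previous bound on $\|G\|_{L^2}$ completes the proof.
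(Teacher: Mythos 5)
Your proposal is correct, but note that the paper does not actually prove Lemma \ref{lem2.2}: it is stated as a known result with citations to \cite{KeTao98, S01, T00}, so there is no internal proof to compare against. What you have written is precisely the standard derivation underlying those references, and it is sound: the $TT^{\ast}$ factorization plus duality gives the non-retarded bound, Christ--Kiselev \cite{CK01} upgrades it to the causal truncation in part i) (and your observation that the exclusion of $(2,\infty)$ from $2d$ admissibility forces $q_1'<2<q_2$, so the strict inequality needed by Christ--Kiselev is automatic, is exactly the right point -- indeed the paper's own remark following the lemma invokes Christ--Kiselev in the same way), and part ii) needs no truncation step since the integral there is over all of $\mathbb{R}$. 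Two small imprecisions are worth flagging, though neither is a gap. First, the identity ``$(L^2L^{\infty})^{\ast}=L^2L^1$'' is not literally true (duals of $L^{\infty}$-type spaces are larger); what your argument actually uses is only the H\"older pairing $|\langle F, Ag\rangle| \leq \|F\|_{L^2L^1}\|Ag\|_{L^2L^{\infty}}$ together with $\|G\|_{L^2}=\sup_{\|g\|_{L^2}=1}|\langle G,g\rangle|$, which is all that is needed. Second, in part ii) the radial homogeneous estimate of Lemma \ref{lem2.1} ii) applies only to radial test functions $g$, so in the duality step one must restrict the supremum to radial $g$; this is legitimate because $G=A^{\ast}F$ is radial (as you note), hence its $L^2$ norm is attained against radial test functions after projecting $g$ onto the radial part. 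With those two clarifications your argument is complete.
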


\begin{rem}
We can apply Christ-Kiselev's lemma from \cite{CK01} to deduce that
\begin{align*} 
& \left\|\int_{-\infty}^t e^{\textnormal{i}(t-s)\Delta}F(s,\cdot)ds\right\|_{L^{q_{2}}(\mathbb{R}, L^{r_{2}}(\mathbb{R}^{2}))} \lesssim  \|F\|_{L^{2}(\mathbb{R} , L^{1}(\mathbb{R}^{2}))}
\end{align*}
and
\begin{align*} 
& \left\|\int_{-\infty}^{t} e^{\textnormal{i}(t-s)\Delta}F(s,\cdot)ds\right\|_{L^{2}( \mathbb{R} , L^{\infty}(\mathbb{R}^{2}))}
 \lesssim \|F\|_{L^{q_{1}^{\prime}}(I , L^{r_{1}^{\prime}}(\mathbb{R}^{2}))}
\end{align*}
for any $2d$ Schr\"odinger admissible couples $(q_j,r_j)$ and $j=1,2.$
\end{rem}

\begin{rem}
For any two intervals $I, J \subseteq \mathbb{R}$, it follows from \eqref{eq.r1} that
\begin{align*} 
& \left\|\int_{I} e^{\textnormal{i}(t-s)\Delta}F(s,\cdot)ds\right\|_{L^{2}( J , L^{\infty}(\mathbb{R}^{2}))}
+ \left\|\int_{I }e^{\textnormal{i}(t-s)\Delta}F(s,\cdot)ds\right\|_{L^{\infty}(J, L^{2}(\mathbb{R}^{2}))} \\
& \lesssim \inf \left\{\|F\|_{L^{2}(I , L^{1}(\mathbb{R}^{2}))} , \|F\|_{L^{1}(I , L^{2}(\mathbb{R}^{2}))}\right\}.
\end{align*}
\end{rem}

\section{Proof of theorem \ref{thm.wellposedness}} \label{proof1}

Our aim in this section is to establish Theorem \ref{thm.wellposedness}. To do this, we shall make use of the following radial ansatz,
\begin{align}\label{eq.lw1}
\begin{split}
u(t,x)&=u(t,\mid x\mid ), \qquad  \quad \, A_0(t,x) = A_0(t,\mid x\mid ), \\
A_1(t,x) &= -\frac{x_2}{\mid x\mid } h(t,\mid x\mid ), \quad A_2(t,x) = \frac{x_1}{\mid x\mid } h(t,\mid x\mid ).
\end{split}
\end{align}
Setting $r=\mid x\mid $ and
$$
A_{\theta} = -x_2A_1+x_1A_2 = -\mid x\mid h(t,\mid x\mid ),
$$
then we obtain that
\begin{align*}
A_1(t,x) &= \frac{x_2}{\mid x\mid ^2} A_\theta(t,\mid x\mid ), \quad A_2(t,x) = -\frac{x_1}{\mid x\mid ^2} A_\theta(t,\mid x\mid )
\end{align*}
and
\begin{align}
\left\{
\begin{aligned}
 A_1 \partial_1 u+ A_2 \partial_2 u&= 0, \\
 (\partial_1 + \textnormal{i} A_1)^2 u +  (\partial_2 + \textnormal{i} A_2)^2 u&= \Delta u  - \frac{A_{\theta}^2}{r^2} u, \\ \label{eq.gg3}
\mid  (\nabla +\textnormal{i} A) u \mid ^2 &=  \mid  \nabla  u \mid ^2  + \frac{A_{\theta}^2}{r^2} \mid u\mid ^2,
\end{aligned}
\right.
\end{align}
where $A=(A_1, A_2)$. In addition, it follows from \eqref{CSS} that
\begin{align} 
\left\{
\begin{aligned} \label{sys1}
\textnormal{i} \, \partial_t u+ \Delta u &=  A_0 u + \frac{A_{\theta}^2}{r^2} u -\mid u\mid ^{p-1} u,\\
\partial_r A_0 & = \frac{A_{\theta}}{r} \mid u\mid ^2, \\
\partial_t A_{\theta}& = r \mbox{Im}\left(\overline{u} \,\partial_r u\right), \\
\partial_r A_{\theta} &=- \frac{r}{2} \mid u\mid ^2.
\end{aligned}
\right.
\end{align}
In view of \eqref{sys1}, the terms $A_0, A_{\theta}$ can be related explicitly to $u$ and we have that
\begin{align}\label{eq.ss2a}
A_{\theta} (t, r) = A_{\theta}(\mid u(t,\cdot)\mid ^2)(r), \quad  A_0(t, r) = A_0(\mid u(t,\cdot)\mid ^2)(r),
\end{align}
where
\begin{align*}
A_\theta(f)(r)=- \frac{1}{2}\int_0^r f(\rho) \rho \,d\rho, \quad
A_0(f)(r)=-\int_r^\infty \frac{A_\theta(f)}{\rho} f(\rho)  \, d\rho.
\end{align*}
Thus \eqref{sys1} leads to
\begin{align}\label{NLS1}
\textnormal{i} \, \partial_t u+ \Delta u = \Lambda(u),
\end{align}
where
$$
\Lambda(u) = \Lambda_1(u) + \Lambda_2(u) + \Lambda_3(u),$$
and
\begin{align} \label{defLa}
\Lambda_1(u) = A_0(\mid u\mid ^2) u, \ \ \Lambda_2(u) = \left(\frac{A_\theta(\mid u\mid ^2)}{r}\right)^2 u,\ \ \Lambda_3(u)=- \mid u\mid ^{p-1} u.
\end{align}
As a consequence of \eqref{NLS1}, we see that
\begin{align}\label{NLS1i}
u(t) = e^{\mathrm{i} \Delta t} u_0 + \mathrm{i} \int_0^t e^{\mathrm{i} \Delta (t-s)} \Lambda(u)(s) \, ds.
\end{align}

\subsection{Estimates of the electromagnetic potentials.} In the following, we shall present some apriori estimates for the nonlocal terms $ A_0 $ and $ A_{\theta}$, which may be of independent interests.

\begin{lem}  \label{lem.ineq}
If  $2 \leq  s   \leq \infty$ and $b \in [0,2]$ satisfy
$$
\frac b2 = 1- \frac 2s + \frac 1q,\quad s > 2
$$
or $b=0$, $q=\infty$ and $s=2$, then
\begin{align}\label{eq.es0}
\left\| \frac{A_\theta(\mid u\mid ^2)}{\mid x\mid ^b}\right\| _{L^q(\R^2)} \lesssim \|d u\| ^2_{L^{s} (\mathbb{R}^2)}.
\end{align}
\end{lem}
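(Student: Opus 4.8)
The plan is to reduce the two-dimensional weighted estimate \eqref{eq.es0} to a one-dimensional weighted Hardy inequality and then to invoke the sharp (Muckenhoupt-type) criterion for its validity; the exponent relation $\frac b2 = 1 - \frac 2s + \frac1q$ is exactly what makes that criterion hold, on the nose. First I would pass to polar coordinates. Since $A_\theta(|u|^2)(r) = -\frac12\int_0^r |u(\rho)|^2\rho\,d\rho$ depends only on $r=|x|$, setting $f=|u|^2$ (a nonnegative radial function) and recalling that $\|g\|^q_{L^q(\R^2)} = 2\pi\int_0^\infty |g(r)|^q r\,dr$ for radial $g$, the claimed bound becomes
\begin{align*}
\left(\int_0^\infty \Big(r^{-b}\int_0^r f(\rho)\rho\,d\rho\Big)^q r\,dr\right)^{1/q} \lesssim \left(\int_0^\infty f(\rho)^{s/2}\rho\,d\rho\right)^{2/s},
\end{align*}
where the right-hand side is $\||u|^2\|_{L^{s/2}(\R^2)} = \|u\|_{L^s(\R^2)}^2$, the right member of \eqref{eq.es0}. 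Thus everything is controlled by the weighted Hardy operator $Tf(r) = r^{-b}\int_0^r f(\rho)\rho\,d\rho$, and the statement is precisely the boundedness $T\colon L^{s/2}(\rho\,d\rho)\to L^q(r\,dr)$ in the radial variable.

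The crucial point is that the inequality is \emph{scale critical}. A direct application of H\"older's inequality to the inner integral gives $\int_0^r f\rho\,d\rho \lesssim r^{2-4/s}\|u\|_{L^s(\R^2)}^2$, which has the correct homogeneity but, inserted into the outer $L^q$ integral, produces the borderline integrand $r^{-1}$ and hence a logarithmic divergence. Therefore a pointwise bound cannot suffice when $q<\infty$, and I would instead apply the weighted Hardy inequality: writing $G(\rho)=f(\rho)\rho$ so that $\int_0^r f\rho\,d\rho=\int_0^r G\,d\rho$, boundedness for $1<\frac s2\le q<\infty$ is equivalent to finiteness of
\begin{align*}
\sup_{r>0}\left(\int_r^\infty t^{1-bq}\,dt\right)^{1/q}\left(\int_0^r \rho\,d\rho\right)^{\frac{s-2}{s}}.
\end{align*}
A short computation shows the first factor is comparable to $r^{2/q-b}$ (this uses $bq>2$, which follows from $s>2$) and the second to $r^{2(s-2)/s}$, so their product is $r^{2/q-b+2-4/s}$; the exponent vanishes identically precisely because $\frac b2=1-\frac2s+\frac1q$, so the supremum is a finite constant. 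The hypothesis $b\le2$ is exactly the admissibility constraint $\frac s2\le q$ that places us in this regime, while $s>2$ gives $\frac s2>1$.

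It remains to treat the cases with $q=\infty$. When $q=\infty$ and $s>2$ (so $b=2-\frac4s$), the pointwise H\"older bound above divided by $r^b=r^{2-4/s}$ gives the uniform estimate directly. The special endpoint $b=0$, $q=\infty$, $s=2$ is immediate, since $\|A_\theta(|u|^2)\|_{L^\infty(\R^2)}\le \frac12\int_0^\infty |u|^2\rho\,d\rho=\frac1{4\pi}\|u\|_{L^2(\R^2)}^2$. I expect the only genuine obstacle to be the scale-critical regime $q<\infty$: the naive estimate fails by a logarithm, so the argument must exploit the $L^q$-integration rather than a pointwise bound, and verifying that the Muckenhoupt characteristic is actually finite (not merely that the powers match) is where the exact exponent relation enters. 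As a self-contained alternative to citing the weighted Hardy inequality, one may substitute $r=e^\tau$, $\rho=e^\sigma$, turning $T$ into a convolution on the multiplicative group $(0,\infty)$ with a one-sided, exponentially decaying kernel; Young's inequality then yields the bound, the kernel lying in $L^m$ with $\frac1m=1+\frac1q-\frac2s$ exactly when $\frac s2\le q$.
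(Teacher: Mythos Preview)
Your proof is correct and takes a genuinely different route from the paper's.

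The paper recognizes that $\bigl|A_\theta(f)\bigr|/|x|^2$ is pointwise dominated by the Hardy--Littlewood maximal function $M(f)$, then records three endpoint bounds (an $L^\infty\to L^\infty$ bound, the trivial $L^1\to L^\infty$ bound for $b=0$, and the weak-type $(1,1)$ maximal estimate) at the three vertices of the admissible triangle in the $(1/q,1/s)$ plane, and finishes by Marcinkiewicz interpolation. Your argument instead reduces to a one-dimensional weighted Hardy inequality for the averaging operator $f\mapsto \int_0^r f(\rho)\rho\,d\rho$ and verifies the Muckenhoupt $A_{p,q}$ characteristic by hand; the scaling relation $\tfrac{b}{2}=1-\tfrac{2}{s}+\tfrac{1}{q}$ is exactly what makes the two power weights balance, and the constraints $s>2$ and $b\le 2$ translate into convergence of the tail integral and the admissibility condition $s/2\le q$. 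Your convolution alternative (after $r=e^\tau$) is also clean and avoids citing the weighted Hardy theorem altogether.

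Both approaches cover the same range of parameters. Yours is arguably more elementary and self-contained, with no interpolation theorem needed and the role of the exponent relation made completely transparent. The paper's maximal-function viewpoint has the advantage of tying the estimate to a classical object and yields the intermediate bound \eqref{eq.ipet1} (which the paper reuses later) as a byproduct.
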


\begin{proof}
In order to achieve \eqref{eq.es0}, we shall set $ f = \mid u\mid ^2$ and prove the estimate
\begin{align}\label{eq.es310}
\left\| \frac{A_\theta(f)}{\mid x\mid ^b}\right\| _{L^q(\R^2)} \lesssim \| f\| _{L^{\frac s 2} (\R^2)}.
\end{align}
To do this, we shall employ interpolation arguments. Let us first take three points $B_1 (0,0),$ $B_2 (0,1/2)$ and $B_3(1,1/2)$ in the plane $(1/q,1/s)$.
In the point $B_1$, we have that $b=2$. Using the simple $L^\infty$-estimate involving the maximal function, we arrive at
$$ \left\| \frac{A_\theta(f)}{\mid x\mid ^2}\right\| _{L^\infty(\R^2)} = \left\| M(f)\right\| _{L^{\infty}(\R^2)} \lesssim \left\| f\right\| _{L^{\infty}(\R^2)}.
$$
In the point $B_2$, we see that $b=0$. Therefore, we obtain the trivial estimate
\begin{align*}
\left\| A_\theta(f)\right\| _{L^\infty(\R^2)} \lesssim \left\| f\right\| _{L^1(\R^2)}.
\end{align*}
Finally, in the point $B_3$, we get that $b=2$. Note that
$$
\bigl\lvert  \frac{A_\theta(f)}{\mid x\mid ^2} \bigr\rvert  \lesssim \mid M(f)(x)\mid , \quad \| M(f) \| _{L^{1,\infty}(\R^2)} \lesssim  \| f\| _{L^{1}(\R^2)}.
$$
This estimate implies that for any $r \in (1,\infty)$,
\begin{align}\label{eq.ipet1}
\left\|  \frac{A_\theta(f)}{\mid x\mid ^2} \right\| _{L^r(\mathbb{R}^2)}  \lesssim \left\|  M(f)\right\| _{L^r(\mathbb{R}^2)} \lesssim \left\| f\right\| _{L^r(\mathbb{R}^2)}.
\end{align}
Consequently, we find that
\begin{align*}
\left\| \frac{A_\theta(f)}{\mid x\mid ^2} \right\| _{L^{1,\infty}(\R^2)} \lesssim \left\| f\right\| _{L^1(\R^2)}.
\end{align*}
An interpolation argument based on the Marcinkiewicz interpolation theorem leads to the inequality of Lemma \ref{lem.ineq} for all points $(1/s,1/q)$ in the closure of $\triangle B_1 B_2 B_3$ except the segment $B_2B_3$ (including $B_3$ and excluding $B_2$). This completes the proof.
\end{proof}

For radial functions $f,g \in L^1(\mathbb{R}^2) \cap L^\infty(\mathbb{R}^2)$, we define the following bilinear form,
\begin{align*}
A_0(f,g)(r) = - \int_r^\infty \frac{A_\theta(f)}{\rho} g(\rho)  \, d\rho.
\end{align*}
It is clear to see that $A_0(f, f)=A_0(f)$.
\begin{lem}  \label{lem.bf1}
If $q \in [1,\infty], s_1 \in [2,\infty]$ and $s_2 \in [2,\infty]$ satisfy
\begin{align}\label{eq.bf2}
\renewcommand*{\arraystretch}{1.9}
\left\{ \begin{array}{l}
q < \infty , s_1 >2, \  \frac{1}{q} \leq \frac 2{s_1} + \frac 2{s_2} \leq 1 + \frac 1q, \\
\frac{a}{2} + b = 1 - \frac{2}{s_1}-\frac{2}{s_2}+ \frac{1}{q}, \ a <  \frac{2}{q},
\end{array} \right.
\end{align}
or
\begin{align}\label{eq.bf2mu1}
\renewcommand*{\arraystretch}{1.9}
\left\{ \begin{array}{l}
q=\infty ,\ s_1>2, \  0 \leq \frac 2{s_1} + \frac 2{s_2} \leq 1,\\
\frac{a}{2} + b = 1 - \frac{2}{s_1}-\frac{2}{s_2}, \ a \leq 0,
\end{array} \right.
\end{align}
or
\begin{align}\label{eq.bf2mm1}
 q=\infty,s_1=s_2=2, \  b=0, a=-2,
\end{align}
then
\begin{align}\label{eq.bf3}
\left\| \frac{A_0(\mid u\mid ^2)}{\mid x\mid ^a}\right\| _{L^q(\R^2)} \lesssim \| u\| ^2_{L^{s_1} (\R^2)}  \left\| \mid x\mid ^{b} u\right\| ^2_{L^{s_2} (\R^2)}.
\end{align}
\end{lem}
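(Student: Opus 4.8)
The plan is to exploit the bilinear form $A_0(f,g)(r)=-\int_r^\infty \rho^{-1}A_\theta(f)(\rho)\,g(\rho)\,d\rho$ and to estimate its two arguments asymmetrically: since $A_0(|u|^2)=A_0(f,f)$ with $f=|u|^2$, the copy of $f$ sitting inside $A_\theta$ will be controlled through Lemma \ref{lem.ineq} and will produce the factor $\|u\|_{L^{s_1}}^2$, while the outer copy, playing the role of $g$, will be kept together with its weight and will produce $\||x|^b u\|_{L^{s_2}}^2$. Throughout I pass to the radial variable, writing every $L^q(\R^2)$ norm of a radial function as a weighted one-dimensional integral against $r\,dr$, so that \eqref{eq.bf3} becomes a one-dimensional statement.

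First I fix $q_1\in[1,\infty]$ and $b_1\in[0,2]$ linked by $\frac{b_1}{2}=1-\frac{2}{s_1}+\frac{1}{q_1}$, which is exactly the relation allowing Lemma \ref{lem.ineq} to give $\|\Phi\|_{L^{q_1}(\R^2)}\lesssim\|u\|_{L^{s_1}}^2$ for $\Phi:=A_\theta(f)/|x|^{b_1}$. Writing $A_0(f,g)(r)=-\int_r^\infty \rho^{b_1-1}\,\Phi(\rho)\,g(\rho)\,d\rho$, the target splits into two steps. The analytic core is a weighted Hardy inequality for the averaging operator $\phi\mapsto\int_r^\infty\phi\,d\rho$ with the power weights $u(r)=r^{1-aq}$ and $v(\rho)=\rho^{c}$, namely
\[
\Big\|r^{-a}\int_r^\infty|\phi(\rho)|\,d\rho\Big\|_{L^q(r\,dr)}\lesssim\|\phi\|_{L^p(\rho^{c}\,d\rho)},\qquad \tfrac1p=\tfrac1{q_1}+\tfrac2{s_2},
\]
which for $1<p\le q<\infty$ holds by the Muckenhoupt criterion as soon as $\int_0^r u<\infty$, $\int_r^\infty v^{1-p'}<\infty$ and the powers of $r$ in the criterion cancel. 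The second step is a plain Hölder splitting $\|\phi\|_{L^p(\rho^c d\rho)}\lesssim\|\Phi\|_{L^{q_1}(\rho d\rho)}\,\||x|^{2b}g\|_{L^{s_2/2}(\rho d\rho)}$ with conjugate exponents $q_1/p$ and $(s_2/2)/p$, which recovers $\|u\|_{L^{s_1}}^2\,\||x|^b u\|_{L^{s_2}}^2$ after setting $g=f=|u|^2$.

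A short computation then shows that all the analytic requirements are encoded in the hypotheses: convergence of $\int_0^r u$ forces $a<\frac2q$ (precisely the assumption in \eqref{eq.bf2}), cancellation of the $r$-powers in the Muckenhoupt quantity is exactly the scaling identity $\frac a2+b=1-\frac2{s_1}-\frac2{s_2}+\frac1q$, and, using this identity, convergence of $\int_r^\infty v^{1-p'}$ is again equivalent to $a<\frac2q$. It remains to see that the inequalities $\frac1q\le\frac2{s_1}+\frac2{s_2}\le1+\frac1q$ allow an admissible $q_1$, i.e. that $\frac1{q_1}$ can be picked in $[\max(0,\frac1q-\frac2{s_2}),\min(\frac2{s_1},1-\frac2{s_2}))$, which renders Lemma \ref{lem.ineq} applicable ($\frac1{q_1}\le\frac2{s_1}$), the Hölder step legitimate ($p>1$) and the Hardy range valid ($p\le q$); the left inequality is what guarantees $p\le q$ is compatible with $\frac1{q_1}\le\frac2{s_1}$.

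The remaining work is entirely at the boundary, and this is where the main difficulty lies. When $s_2=2$ the split forces $p=1$, so I take $q_1=\infty$ (the $L^\infty$ bound on $\Phi$ provided by Lemma \ref{lem.ineq} for $s_1>2$) and invoke the $p=1$ version of the weighted Hardy inequality. The case $q=\infty$ in \eqref{eq.bf2mu1} has no outer Lebesgue integration and must be done by a single Hölder estimate of the tail $\int_r^\infty\rho^{b_1-1}|g|\,d\rho$, the surviving power of $r$ cancelling against $r^{-a}$ thanks to $a\le0$; the borderline configurations there, together with the explicit case \eqref{eq.bf2mm1} ($s_1=s_2=2$, $b=0$, $a=-2$), are handled elementarily from the uniform bound $|A_\theta(|u|^2)|\lesssim\|u\|_{L^2}^2$ and $\int_r^\infty|g|/\rho\,d\rho\lesssim r^{-2}\|u\|_{L^2}^2$. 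Thus the essential obstacle is to pin down the power weights so that the Muckenhoupt condition reproduces exactly the scaling and range conditions of \eqref{eq.bf2}, and then to organize these degenerate endpoints so that they are covered by the $p=1$ Hardy inequality and by bare Hölder/pointwise estimates rather than by the generic $1<p\le q<\infty$ argument.
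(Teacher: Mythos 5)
Your proposal is correct and follows essentially the same route as the paper's own proof: both write $A_0(f,g)$ as a radial tail integral, control it by a Muckenhoupt-type weighted Hardy inequality, peel off the $A_\theta$ factor by H\"older so that Lemma \ref{lem.ineq} yields the $\|u\|_{L^{s_1}}^2$ factor, and dispose of the endpoint cases ($q=\infty$, $s_1=s_2=2$, and the forced $p=1$ situations) by bare H\"older/pointwise bounds. The only substantive difference is that you invoke the general two-weight Muckenhoupt criterion with exponents $p\le q$ in one stroke, which merges the paper's two sub-cases (treated there via the equal-exponent estimate \eqref{eq.M1.5} and the interpolated estimate \eqref{eq.M1.6} with inner exponent $1$); be aware, though, that as written your admissible interval for $1/q_1$ is empty when $q=1$ and on the critical line $\frac1q=\frac2{s_1}+\frac2{s_2}$ (where one must take $1/q_1=2/s_1$, i.e.\ $b_1=2$, which Lemma \ref{lem.ineq} does allow), so these points must also be routed through your $p=1$ endpoint treatment rather than the generic $1<p\le q<\infty$ argument.
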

\begin{rem}\label{r.noH1}
For the case $q=\infty$ in \eqref{eq.bf2mu1}, we can avoid the assumption $a < 2/q=0$ in \eqref{eq.bf2}. In fact, this condition comes from the application of Hardy type inequality and we use it only for $q < \infty.$
\end{rem}

\begin{proof}
To prove \eqref{eq.bf3}, we shall deduce that one of the assumptions  \eqref{eq.bf2},  \eqref{eq.bf2mu1} and  \eqref{eq.bf2mm1} implies
\begin{align}\label{eq.bf3m}
\left\| \frac{A_0(f,g)}{\mid x\mid ^a}\right\| _{L^q(\R^2)} \lesssim \| f\| _{L^{\frac{s_1}{2}} (\R^2)}  \left\| \mid x\mid ^{2b} g\right\| _{L^{\frac{s_2}{2}} (\R^2)}.
\end{align}
Let us first consider the case $ q < \infty$ and \eqref{eq.bf2} is satisfied. In this case, we are going to take advantage of the following Hardy type inequality from \cite[Theorem 2]{M72},
\begin{align}\label{eq.M1.41}
\left(\int_{0}^{\infty}\mid r^{A} \int_{r}^{\infty} F(\rho) \,d\rho\mid ^{q} \,d r \right)^{\frac 1  q} \lesssim \left(\int_{0}^{\infty}\mid \rho^{A+1} F(\rho)\mid ^{q} \,d \rho\right)^{\frac 1 q},
\end{align}
where $1 \leq q < \infty$, $-1/q<A<\infty$ and $F: \mathbb{R}\to \mathbb{R}$ is a Lebesgue measurable function. As an immediate consequence of \eqref{eq.M1.41}, we have that
\begin{align}\label{eq.M1.5}
\left\| r^{\alpha} \int_r^\infty F(\rho) \, d\rho \right\| _{L^q(rdr)} \lesssim \left\| r^{\alpha+1} F(r) \right\| _{L^q(rdr)},
\end{align}
where $1 \leq q <\infty$,  $\alpha > -{2}/{q}$ and
$$
\left\| G\right\| _{L^q(r dr)} := \left(\int_0^\infty \mid G(r)\mid ^q  r\, dr \right)^{1/q}.
$$
In addition, for $\alpha \geq 0$, by making an interpolation between \eqref{eq.M1.5} and the trivial estimate
\begin{align*}
\left\| r^{\alpha} \int_r^\infty F(\rho) \, d\rho \right\| _{L^\infty(rdr)} \lesssim \left\| r^{\alpha-1} F(r) \right\| _{L^1(rdr)},\end{align*}
we obtain that
\begin{align}\label{eq.M1.6}
\left\| r^{\alpha} \int_r^\infty F(\rho) \,d\rho \right\| _{L^q(rdr)} \lesssim \left\| r^{\alpha+1 + \frac{2}{q}-\frac{2}{m}} F(r) \right\| _{L^m(rdr)},
\end{align}
where $1 \leq m \leq q < \infty$. With these inequalities in hand, we now come back to the proof of \eqref{eq.bf3}. Let us first treat the case $0<a<2/q$. By  using \eqref{eq.M1.5} with $\alpha = -a$ and  H\"older's inequality, we find that
\begin{align*}
\left\| \frac{A_0(f,g)}{\mid x\mid ^a}\right\| _{L^q(\mathbb{R}^2)}&= \left\| r^{-a} A_0(f,g)(r)\right\| _{L^q(rdr)}
= \left\| r^{-a} \int_r^\infty \frac{A_\theta(f)}{\rho} g(\rho) \,  d \rho \right\| _{L^q(rdr)} \\
&\lesssim \left\| r^{-a} A_\theta(f)(r) g(r)  \right\| _{L^q(rdr)} \leq \left\| \mid x\mid ^{-a_1} A_\theta(f)   \right\| _{L^m(\mathbb{R}^2)}
\left\|  \mid x\mid ^{2b}g  \right\| _{L^{\frac{s_2}{2}}(\mathbb{R}^2)},
\end{align*}
where $a_1=a+2b$ and
$$
\frac 1 m +\frac {2}{s_2} = \frac 1 q.
$$
In addition, from the estimate \eqref{eq.es310}, we get that
$$
\left\| \frac{A_\theta(f)}{\mid x\mid ^{a_1}}\right\| _{L^m(\mathbb{R}^2)} \lesssim \| f\| _{L^{\frac{s_1}{2}}(\mathbb{R}^2)},
$$
where
$$
\frac{a_1}{2}= 1 - \frac{2}{s_1} + \frac{1}{m}, \ \frac 1 m +\frac {2}{s_2} =\frac 1q, \ a_1 \in [0,2], \ s_1 \in (2,\infty].
$$
As a consequence, we arrive at \eqref{eq.bf3m} provided
$$
\frac{a}{2} + b =  1 - \frac{2}{s_1} -  \frac{2}{s_2} + \frac{1}{q}, \ a + 2b \in [0,2], \ s_1 \in (2,\infty].
$$
We next handle the case $\alpha \leq 0$. In this case, we can apply the estimate \eqref{eq.M1.6} with $\alpha =-a \geq 0$ to conclude that
\begin{align*}
\left\| \frac{A_0(f,g)}{\mid x\mid ^a}\right\| _{L^q(\mathbb{R}^2)}= \left\| r^{-a} A_0(f,g)(r)\right\| _{L^q(rdr)}
&= \left\| r^{-a} \int_r^\infty \frac{A_\theta(f)}{\rho} g(\rho) \,  d \rho \right\| _{L^q(rdr)} \\
& \lesssim \left\| r^{-a+\frac{2}{q} -\frac{2}{q_1}} A_\theta(f)(r) g(r)  \right\| _{L^{q_1}(rdr)},
\end{align*}
where $ 1 \leq q_1 \leq q < \infty$. Taking $q_1=1$  and using H\"older's inequality, then we get that
$$ \left\| \frac{A_0(f,g)}{\mid x\mid ^a}\right\| _{L^q(\mathbb{R}^2)} \leq \left\| \mid x\mid ^{-a-2+\frac{2}{q}-2b } A_\theta(f)\right\| _{L^m(\mathbb{R}^2)}
\left\|  \mid x\mid ^{2b}g  \right\| _{L^{\frac {s_2}{2}}(\mathbb{R}^2)},
$$
where
$$
\frac {1} {m}+\frac {2} {s_2}=1.
$$
Applying again the estimate \eqref{eq.es310}, we obtain that
$$
\left\| \frac{A_\theta(f)}{\mid x\mid ^{a_1}}\right\| _{L^m(\mathbb{R}^2)} \lesssim \| f\| _{L^{\frac {s_1}{2}}(\mathbb{R}^2)},
$$
where $a_1=a+2-\frac{2}{q}+2b$ and
$$
\frac{a+2-\frac{2}{q}+2b}{2}= 1 - \frac{2}{s_1} + \frac{1}{m}, \  \frac{1}{m}+\frac{2}{s_2}=1, \ a_1 \in [0,2], \ s_1 \in (2,\infty]. $$
It is equivalent to
$$
\frac{a}{2} + b =  1 - \frac{2}{s_1} -  \frac{2}{s_2}+\frac{1}{q}, \ a + 2b \in [0,2], \ s_1 \in (2,\infty].
$$
Therefore, we have that \eqref{eq.bf3m} holds when \eqref{eq.bf2} is fulfilled. Next we consider the case $q=\infty$ when \eqref{eq.bf2mu1} or \eqref{eq.bf2mm1} is satisfied. By applying H\"older's inequality, we first see that
\begin{align*}
\renewcommand\arraystretch{2.2}
r^{-a}\mid  A_0(f,g)(r)\mid  \lesssim \left \| \mid x\mid ^{-2-a} A_\theta(f) g \right \| _{L^1(\mathbb{R}^2)} \lesssim \left\| \mid x\mid ^{-2-a-2b}A_\theta(f) \right\| _{L^{m}(\mathbb{R}^2)} \left\|  \mid x\mid ^{2b}  g\right \| _{L^{\frac{s_2}{2}}(\mathbb{R}^2)},
\end{align*}
where
$$
\frac 1 m + \frac {2}{s_2}=1.
$$
Moreover, according to the estimate \eqref{eq.es310}, we get that
$$
\left\| \frac{A_\theta(f)}{\mid x\mid ^{a_1}} \right\| _{L^{m}(\mathbb{R}^2)} \lesssim \|  f \| _{L^{\frac{s_1}{2}}(\mathbb{R}^2)},
$$
where $a_1=2+a+2b$ and
\begin{align*}
\frac{2+a+2b}2 = 1- \frac{2}{s_1} + \frac 1 m, \ \frac{1}{m}+\frac{2}{s_2}=1, \  2+a+2b\in [0, 2] ,  \ s_1  \in (2, \infty],
\end{align*}
or
\begin{align*}
m = \infty, \ s_1=2, \ \frac{2+a+2b}2 = 0.
\end{align*}
Hence we have that \eqref{eq.bf3m} holds when one of the assumptions \eqref{eq.bf2mu1} or \eqref{eq.bf2mm1} is fulfilled. Thus the proof is completed.
\end{proof}

\begin{cor}\label{cor.a01} If $u \in H^1_{rad}(\mathbb{R}^2),$ then for
 $q \in [1,\infty) $ and $a \in (-2,2/q)$ or for $q=\infty $ and $a \in [-2,0]$, there holds that
\begin{align}\label{eq.cub2}
\left\| \frac{A_0(\mid u\mid ^2)}{\mid x\mid ^a}\right\| _{L^q(\mathbb{R}^2)} \lesssim \| u\| ^4_{H^1 (\mathbb{R}^2)}  .
\end{align}
\end{cor}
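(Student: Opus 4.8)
The plan is to read off the corollary directly from Lemma \ref{lem.bf1} by selecting admissible exponents $s_1,s_2$ together with a weight $b$, and then bounding each factor on the right-hand side of \eqref{eq.bf3} by $\|u\|_{H^1(\R^2)}^2$. Since here $f=g=|u|^2$, the estimate \eqref{eq.bf3} reads $\| A_0(|u|^2)/|x|^a\|_{L^q(\R^2)} \lesssim \|u\|_{L^{s_1}(\R^2)}^2\, \||x|^b u\|_{L^{s_2}(\R^2)}^2$, so it suffices to exhibit, for the given pair $(q,a)$, a triple $(s_1,s_2,b)$ satisfying one of the hypotheses \eqref{eq.bf2}, \eqref{eq.bf2mu1}, \eqref{eq.bf2mm1}, together with the two embeddings $\|u\|_{L^{s_1}}\lesssim\|u\|_{H^1}$ and $\||x|^b u\|_{L^{s_2}}\lesssim\|u\|_{H^1}$.

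For the first factor I would invoke the two-dimensional Sobolev embedding $H^1(\R^2)\hookrightarrow L^{s_1}(\R^2)$, which holds for every $2\le s_1<\infty$; keeping $s_1<\infty$ is essential, since a radial $H^1$ function need not be bounded near the origin. For the second factor I would use a weighted radial inequality of Strauss type: interpolating by H\"older the bound $\|u\|_{L^2}\lesssim\|u\|_{H^1}$, the plain Sobolev bound, and the Strauss inequality \eqref{Strauss}, namely $\||x|^{1/2}u\|_{L^\infty}\lesssim\|u\|_{H^1}$, yields $\||x|^b u\|_{L^{s_2}}\lesssim\|u\|_{H^1}$ for all $2\le s_2\le\infty$ and all $0\le b\le \frac12-\frac1{s_2}$. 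Feeding these two embeddings into \eqref{eq.bf3} reduces the whole matter to the single admissibility relation $\frac a2+b=1-\frac2{s_1}-\frac2{s_2}+\frac1q$ appearing in \eqref{eq.bf2}.

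The heart of the proof is then a feasibility problem: given $(q,a)$ in the stated range, can one solve $\frac a2+b=1-\frac2{s_1}-\frac2{s_2}+\frac1q$ subject to $2<s_1<\infty$, $2\le s_2\le\infty$, the weight restriction $0\le b\le\frac12-\frac1{s_2}$, and the remaining inequalities of \eqref{eq.bf2}? The upper endpoint $a\to 2/q$ is handled exactly as in Remark \ref{r.noH1}: it is the regime governed by the Hardy inequality \eqref{eq.M1.5}, reached by taking $b$ close to $\frac12$ and $s_1,s_2$ large, and $a<2/q$ is the genuine obstruction there. The delicate regime is $a$ near its lower endpoint: then $\frac a2$ is very negative and must be compensated by pushing $s_1,s_2\downarrow 2$, so that $1-\frac2{s_1}-\frac2{s_2}$ becomes as negative as the constraints permit. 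The limiting corner $q=\infty$, $a=-2$ is precisely the degenerate case \eqref{eq.bf2mm1}, in which the bound collapses to the trivial inequality $\|A_0(|u|^2)\|_{L^\infty}\lesssim\|u\|_{L^2}^4\lesssim\|u\|_{H^1}^4$ obtained from $s_1=s_2=2$, $b=0$.

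I expect the main obstacle to be exactly this optimization over $(s_1,s_2,b)$. One must verify that the linear constraints of \eqref{eq.bf2}--\eqref{eq.bf2mm1} are simultaneously compatible with the admissible weight window $0\le b\le\frac12-\frac1{s_2}$ throughout the claimed interval of $a$; balancing the lower bound on $\frac2{s_1}+\frac2{s_2}$ against the maximal allowed weight $b$ is the subtle point, and it is here that I would check carefully whether the lower endpoint can actually be attained or must be restricted. Once such a triple has been fixed, the remaining work is a routine application of \eqref{eq.bf3}, the Sobolev embedding, and \eqref{Strauss}, completing the estimate \eqref{eq.cub2}.
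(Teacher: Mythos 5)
Your strategy coincides with the paper's own proof: feed $f=g=|u|^2$ into Lemma \ref{lem.bf1}, bound $\|u\|_{L^{s_1}}$ by Sobolev embedding, bound $\||x|^b u\|_{L^{s_2}}$ by the interpolated Strauss estimate in the window $0\le b\le\tfrac12-\tfrac1{s_2}$ (this is exactly \eqref{b} in the paper), and then select admissible exponents. The paper's selections are: $b=0$, $s_1=s_2=4-\eps$ for $a\in[0,2/q)$; $b=\tfrac12-\tfrac1{s_2}$ for $a<0$; and for $q=\infty$ it does not select exponents at all but proves the endpoint bounds $a=0$ and $a=-2$ directly and interpolates between them. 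So the route is the same; the problem is that your proposal stops exactly where the proof begins: the ``feasibility problem'' you formulate is never solved, and it is the entire mathematical content of the corollary.

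Moreover, that deferred step is not routine and actually fails on part of the stated range, which confirms your own suspicion that the lower endpoint ``must be restricted.'' For $q<\infty$, condition \eqref{eq.bf2} contains $\tfrac2{s_1}+\tfrac2{s_2}\le 1+\tfrac1q$, which via the admissibility relation is equivalent to $\tfrac a2+b\ge 0$; combining this with the Strauss window $b\le\tfrac12-\tfrac1{s_2}$ one finds, for $a<0$, that $\tfrac a2+b+\tfrac2{s_2}\le 1-|a|$, while the requirement $s_1>2$ forces this same quantity to exceed $\tfrac1q$. Hence no admissible triple $(s_1,s_2,b)$ exists unless $a>\tfrac1q-1$; in particular for $q=1$ no $a<0$ is reachable. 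The paper's own proof has the identical defect, hidden behind ``it is easy to derive that there exist $s_1,s_2\in(2,\infty)$'': the displayed system there is infeasible once $a\le\tfrac1q-1$. Nor is this an artifact of the method: taking $u_n(x)=n^{-1/2}\phi(|x|-n)$ with $\phi$ a fixed bump, one has $\|u_n\|_{H^1}\sim 1$ while $A_0(|u_n|^2)\sim n^{-2}$ on $\{|x|\le n-1\}$, so that $\left\| |x|^{-a}A_0(|u_n|^2)\right\|_{L^q}^q\gtrsim n^{|a|q+2-2q}\to\infty$ whenever $a<\tfrac2q-2$; the corollary as stated is therefore false on that part of the range. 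Finally, for $q=\infty$ your plan (case \eqref{eq.bf2mu1} plus the corner case \eqref{eq.bf2mm1}) only reaches $a\in[-1,0]\cup\{-2\}$, since \eqref{eq.bf2mu1} together with the window $b\le\tfrac12$ forces $a\ge-1$; to cover $(-2,-1)$ one needs the paper's extra step of interpolating between the two endpoint estimates, which your outline does not contain.
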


\begin{proof}
We start with considering the case $ q\in[1, \infty)$ and $a \in (-2,2/q)$. According to Lemma \ref{lem.bf1}, it suffices to deduce that, for $q \in [1,\infty) $ and $a \in (-2,2/q)$, there exist $s_1, s_2 \in [2,\infty]$ and $b \in \R$ such that the assumption \eqref{eq.bf2} is fulfilled and
\begin{align}\label{eq.cra1}
\left\| \mid x\mid ^{b} u\right\| _{L^{s_2}(\mathbb{R}^2)}\lesssim \| u\| _{H^1(\mathbb{R}^2)}.
\end{align}
Let us first prove that $\eqref{eq.cra1}$ holds for some $b \in \R$. Using standard Strauss type estimate, we know that
\begin{align}\label{trig}
\left\| x\mid ^{\frac 1 2} u\right\| _{L^\infty(\mathbb{R}^2)}\lesssim\| u\| _{H^s(\mathbb{R}^2)}, \ s\in (1/2,1].
\end{align}
By making an interpolation between \eqref{trig}  and $\| u\| _{L^2} = \| u\| _{L^2}$, then we derive that
$$
\left\| \mid x\mid ^{\frac {\theta}{2}} u\right\| _{L^{\frac{2}{1-\theta}}(\mathbb{R}^2)}\lesssim \| u\| _{H^{\theta}(\mathbb{R}^2)}, \ \theta \in (0,1).
$$
Therefore, we get that
$$
\left\| \mid x\mid ^{\frac 1 2-\frac {1}{s_2}} u\right\| _{L^{s_2}(\mathbb{R}^2)} \lesssim \| u\| _{H^{1}(\mathbb{R}^2)}, \ s_2  \in [2,\infty).
$$
From such an estimate and the Sobolev embedding inequality
$$
\|  u\| _{L^{s_2}(\mathbb{R}^2)} \lesssim \| u\| _{H^{1}(\mathbb{R}^2)}, \ s_2  \in [2,\infty),
$$
then we infer that \eqref{eq.cra1} holds for $b \in \R$ satisfying
\begin{align} \label{b}
0 \leq b \leq \frac 1 2-\frac {1}{s_2}.
\end{align}
Next we are going to deduce that there exist $s_1, s_2 \in [2,\infty]$ and $b \in \R$ satisfying \eqref{b} such that the assumption \eqref{eq.bf2} is fulfilled. For $a \in [0, 2/q)$, by taking $0 < \varepsilon < 4/(q+1)$ and $s_1=s_2=4-\varepsilon$, we can see that the condition  \eqref{eq.bf2} with $b=0$ is fulfilled, i.e.
\begin{align*}
\renewcommand*{\arraystretch}{1.9}
\left\{ \begin{array}{l}
q < \infty , s_1 >2, \  \frac{1}{q} \leq \frac 2{s_1} + \frac 2{s_2} \leq 1 + \frac 1q, \\
\frac{a}{2}  = 1 - \frac{2}{s_1}-\frac{2}{s_2}+ \frac{1}{q}, \ a <  \frac{2}{q}.
\end{array} \right.
\end{align*}
For $a <0$, it is easy to derive that there exist $s_1, s_2 \in (2,\infty)$ such that the condition \eqref{eq.bf2} with $b={1}{/2}-{1}/{s_2}$ is fulfilled, i.e.
\begin{align*}
\renewcommand*{\arraystretch}{1.9}
\left\{ \begin{array}{l}
\frac{1}{q} \leq \frac 2{s_1} + \frac 2{s_2} \leq 1 + \frac 1q, \\
\frac{2}{s_1}+\frac{1}{s_2}  = \frac{1}{2} + \frac{1}{q} - \frac{a}{2}.
\end{array} \right.
\end{align*}
We now consider the case $q=\infty$ and $a \in [-2, 0]$. In this case, we know that
$$
r^{-a}\mid  A_0(r)\mid  \lesssim \| \mid x\mid ^{-2-a} A_\theta(\mid u\mid ^2) \mid u\mid ^2  \| _{L^1(\mathbb{R}^2)}.
$$
For $a=0$,  we have that
$$
\left\| \frac{A_\theta(\mid u\mid ^2) \mid u\mid ^2}{\mid x\mid ^2}\right\| _{L^1(\mathbb{R}^2)}\lesssim \| M(\mid u\mid ^2) \mid u\mid ^2  \| _{L^1(\mathbb{R}^2)} \lesssim \| \mid u\mid ^2\| ^2_{L^2(\mathbb{R}^2)} = \| u\| ^4_{L^4(\mathbb{R}^2)}.
$$
For $a=-2$, by using Lemma \ref{lem.ineq}, we have that
$$
\|  A_\theta(\mid u\mid ^2) \mid u\mid ^2  \| _{L^1(\mathbb{R}^2)}\lesssim \|  A_\theta(\mid u\mid ^2)  \| _{L^\infty(\mathbb{R}^2)} \|   \mid u\mid ^2  \| _{L^1(\mathbb{R}^2)} \lesssim \| u\| ^4_{L^2(\mathbb{R}^2)}.
$$
Hence, by applying interpolation argument, we get that \eqref{eq.cub2} holds for $q = \infty $ and $a \in [-2,0].$ This completes the proof.
\end{proof}

We are now in a position to prove Theorem \ref{thm.wellposedness}.

\begin{proof} [Proof of Theorem \ref{thm.wellposedness}]
We shall apply  the contraction mapping principle in Banach space associated with  the following Strichartz norms
\begin{alignat}{2}\label{eq.ss1}
\vertiii{u}_{Str, T,1} = & \vertiii{u}_{Str, T} + \vertiii{\partial_r u}_{Str, T},
\end{alignat}
where
\begin{align*}
\vertiii{u}_{Str, T} = \left\| u \right\| _{L^{2}( [0,T);L^{\infty}(\mathbb{R}^{2}))} + \left\| u \right\| _{L^{\infty}( [0,T);L^{2}(\mathbb{R}^{2}))}.
\end{align*}
For simplicity, we shall write $A_0:=A_0(\mid u\mid ^2)$ and $A_{\theta}:=A_{\theta}(\mid u\mid ^2)$. Let us first show the local existence and uniqueness of solutions to the Cauchy problem for \eqref{CSS} in $H_{rad}^1(\mathbb{R}^2)$. To do this, we need to establish the estimate
\begin{align}\label{eq.ss3}
\begin{split}
& \| \Lambda_j(u)\| _{L^{1}([0,T), L^{2}(\mathbb{R}^2))} + \| \partial_r(\Lambda_j(u)) \| _{L^{1}([0,T), L^{2}(\mathbb{R}^2))}\\
&\lesssim T^{3/4} \left(\vertiii{u}_{Str, T,1}^5+ \vertiii{u}_{Str, T,1}^p\right),
\end{split}
\end{align}
where $T>0$ is a small constant determined later and $\Lambda_j(u)$ are given by \eqref{defLa} for $j=1, 2, 3$.
We begin with proving that \eqref{eq.ss3} holds for $j=1$. From the definition of $\Lambda_1$, it is not hard to find that
\begin{align*}
\| \Lambda_1(u)\| _{L^2(\mathbb{R}^2)} + \| \partial_r(\Lambda_1(u))\| _{L^2(\mathbb{R}^2)}
\leq  \| A_0 u\| _{L^2(\mathbb{R}^2)} + \| A_0 \partial_r u\| _{L^2(\mathbb{R}^2)} +\| u \partial_r A_0\| _{L^2(\mathbb{R}^2)}.
\end{align*}
Note first that the estimate \eqref{eq.bf3} with $a=b=0$, $q=\infty$ and $s_1=s_2=4$ gives that
$ \| A_0\| _{L^\infty(\mathbb{R}^2)} \lesssim \| u\| _{L^4}^4$. So we can write
$$
\| A_0 u\| _{L^2(\mathbb{R}^2)} + \| A_0 \partial_r u\| _{L^2(\mathbb{R}^2)} \lesssim \| u\| ^5_{H^1(\mathbb{R}^2)}.
$$
Observe that
$$
\partial_r A_0=\frac{A_\theta}{r} \mid u\mid ^2
$$
and
$$
\left\| \frac{A_\theta} {r} \right\| _{L^4(\mathbb{R}^2)} \lesssim \| u\| ^2_{L^{8/3}(\mathbb{R}^2)}\lesssim \| u\| ^2_{H^1(\mathbb{R}^2)},
$$
where we used \eqref{eq.es0} with $q=4, b=1$ and $s=8/3$.
As a result, we get that
$$
\| u\partial_r A_0\| _{L^2(\mathbb{R}^2)} = \left\| u \frac{A_\theta}{r} \mid u\mid ^2\right\| _{L^2(\mathbb{R}^2)} \lesssim
\left\|  \frac{A_\theta}{r}\right\| _{L^4(\mathbb{R}^2)} \left\| u \mid u\mid ^2\right\| _{L^4(\mathbb{R}^2)} \lesssim \| u\| ^5_{H^1(\mathbb{R}^2)}.
$$
Consequently, we have that
\begin{align*} 
\| \Lambda_1(u)\| _{L^2(\mathbb{R}^2)} + \| \partial_r\Lambda_1(u)\| _{L^2(\mathbb{R}^2)}  \lesssim \| u\| ^5_{H^1(\mathbb{R}^2)}.
\end{align*}
Integrating this estimate with respect to $t$ on the interval $[0, T)$,  we deduce that
\begin{align*} 
\| \Lambda_1(u)\| _{L^{1}([0,T), L^{2}(\mathbb{R}^2))} + \| \partial_r(\Lambda_1(u)) \| _{L^{1}([0,T), L^{2}(\mathbb{R}^2))} \lesssim T \vertiii{u}_{Str, T,1}^5.
\end{align*}
Further we prove \eqref{eq.ss3} for $j=2$. In light of the definition of $\Lambda_2$, we see that
\begin{align*}
\| \Lambda_2(u)\| _{L^2(\mathbb{R}^2)} + \| \partial_r(\Lambda_2(u))\| _{L^2(\mathbb{R}^2)}
&\lesssim  \left\| \left(\frac{A_\theta}{r}\right)^2 u \right\| _{L^2(\mathbb{R}^2)} +  \left\| \left(\frac{A_\theta}{r}\right)^2 \partial_r u \right\| _{L^2(\mathbb{R}^2)} \\
& \quad +\left\| \frac{A_\theta\partial_r A_\theta}{r^2} u \right\| _{L^2(\mathbb{R}^2)} + \left\| \frac{A_\theta^2}{r^3} u \right\| _{L^2(\mathbb{R}^2)}.
\end{align*}
The first two terms in the right side can be estimated easily by taking $b=1, q=\infty$ and $s=4$ in the estimate \eqref{eq.es0}, i.e.
\begin{align}\label{eq.linb11}
\left\| \frac{A_\theta}{r}\right\| _{L^\infty} \lesssim \| u\| _{L^4}^2\lesssim \| u\| ^2_{H^1(\mathbb{R}^2)}.
\end{align}
From \eqref{eq.linb11}, then we get that
$$
\left\| \left(\frac{A_\theta}{r}\right)^2 u \right\| _{L^2(\mathbb{R}^2)} +  \left\| \left(\frac{A_\theta}{r}\right)^2 \partial_r u \right\| _{L^2(\mathbb{R}^2)} \lesssim \| u\| ^5_{H^1(\mathbb{R}^2)}.
$$
Since
$$
\partial_r A_\theta = -\frac{r}{2}  \mid u\mid ^2,
$$
then we can use \eqref{eq.linb11} to obtain that
$$
\left\| \frac{A_\theta\partial_r A_\theta}{r^2} u \right\| _{L^2(\mathbb{R}^2)} \lesssim \| u\| ^5_{H^1(\mathbb{R}^2)}.
$$
In a similar way, we find from \eqref{eq.es0} that
$$
\left\| \frac{A_\theta}{r^{\frac3 2}}\right\| _{L^\infty} \lesssim \| u\| _{L^8}^2\lesssim \| u\| ^2_{H^1(\mathbb{R}^2)}.
$$
This implies that
$$
\left\| \frac{A_\theta^2}{r^3} u \right\| _{L^2(\mathbb{R}^2)} \lesssim \| u\| ^5_{H^1(\mathbb{R}^2)}.
$$
Therefore, we arrive at
$$ \| \Lambda_2(u)\| _{L^2(\mathbb{R}^2)} + \| \partial_r(\Lambda_2(u))\| _{L^2(\mathbb{R}^2)} \lesssim \| u\| ^5_{H^1(\mathbb{R}^2)}.$$
Integerating this estimate with respect to $t$ on the interval $[0, T)$, then we have that
\begin{align*} 
\| \Lambda_2(u)\| _{L^{1}([0,T), L^{2}(\mathbb{R}^2))} + \| \partial_r(\Lambda_2(u)) \| _{L^{1}([0,T), L^{2}(\mathbb{R}^2))} \lesssim T \vertiii{u}_{Str, T,1}^5.
\end{align*}
The final step is to check that \eqref{eq.ss3} holds for $j=3$. By H\"older's inequality and Sobolev embedding inequality, we can write
$$
\| \mid u\mid ^{p-1}\partial_r u \| _{L^{2}(\mathbb{R}^2)} \leq \| \mid u\mid ^{p-1} \| _{L^{4}(\mathbb{R}^2)} \| \partial_r u \| _{L^{4}(\mathbb{R}^2)} \lesssim
\| u \| _{H^1(\mathbb{R}^2)}^{p-1}  \| \partial_r u \| _{L^{4}(\mathbb{R}^2)}
$$
and
$$
 \int_0^T \| \partial_r u\| _{L^{4}(\mathbb{R}^2)} \,dt \leq T^{3/4} \| \partial_r u \| _{L^{4}([0,T), L^{4}(\mathbb{R}^2))} \leq T^{3/4}\vertiii{u}_{Str, T,1}.
$$
Then we derive that
\begin{align*}
\| \mid u\mid ^p\| _{L^{1}([0,T), L^{2}(\mathbb{R}^2))} + \| \mid u\mid ^{p-1}\partial_r u \| _{L^{1}([0,T), L^{2}(\mathbb{R}^2))} \lesssim T^{3/4}\vertiii{u}_{Str, T,1}^p.
\end{align*}
In conclusion, we have that \eqref{eq.ss3} holds for $j=1,2,3.$

Relying on the estimate \eqref{eq.ss3}, we are now able to take advantage of the contraction mapping principle to deduce the local existence and uniqueness of solutions to the Cauchy problem for \eqref{CSS}. To this end, let us first introduce a Banach space
$$
\mathfrak{B}_T  = \{ u(t,\mid x\mid ) \in C^1([0,T], L^2_{rad}(\mathbb{R}^2))\cap C([0,T], H^1_{rad}(\mathbb{R}^2)) : \vertiii{u}_{Str, T,1} < \infty \}
$$
and define an operator $\mathfrak{L}$ on $\mathfrak{B}_T \to \mathfrak{B}_T$ by
$$
\mathfrak{L} (u) = e^{\mathrm{i} \Delta t} u_0 + \mathrm{i} \int_0^t e^{\mathrm{i} \Delta (t-s)} \Lambda(u)(s) ds.
$$
By applying \eqref{eq.ss3}, we know that the nonlinear term $\Lambda(u)$ satisfies the estimate
\begin{align}\label{eq.ss14}
\begin{split}
&\| \Lambda(u)\| _{L^{1}([0,T], L^{2}(\mathbb{R}^2))} + \| \partial_r(\Lambda(u)) \| _{L^{1}([0,T], L^{2}(\mathbb{R}^2))} \\
&\lesssim T^{3/4} \left(\vertiii{u}_{Str, T,1}^5+\vertiii{u}_{Str, T,1}^p \right).
\end{split}
\end{align}
Taking $u_0 \in H^1_{rad}(\mathbb{R}^2)$ with $\| u_0\| _{H^1(\mathbb{R}^2)} \leq R$, then we deduce from \eqref{eq.ss14}, Lemmas \ref{lem2.1} and \ref{lem2.2} that there exists a constant $C^*>0$ so that
\begin{align}\label{eq.ccp1}
\vertiii{\mathfrak{L} (u)}_{Str,T,1} \leq C^* R + C^* T^{3/4}\left( \vertiii{u}_{Str,T,1}^5+\vertiii{u}_{Str,T,1}^p \right).
\end{align}
Thus the estimate \eqref{eq.ccp1} suggests that the operator $\mathfrak{L}$ maps the ball of radius $2RC^*$ in $\mathfrak{B}_T$ into itself provided $T=T(R)$ is so small that
$$
T^{3/4} \left( (2C^*)^5 R^4 + (2C^*)^p R^{p-1} \right)  < 1.
$$
We next show that $\mathfrak{L}$ is a contraction mapping in $\mathfrak{B}_T$. To do this, it is sufficient to prove the estimate
\begin{align} \label{eq.ss15}
\begin{split}
 &\| \Lambda(u) - \Lambda(v) \| _{L^{1}([0,T), L^{2}(\mathbb{R}^2))} + \| \partial_r(\Lambda(u)-\Lambda(v)) \| _{L^{1}([0,T), L^{2}(\mathbb{R}^2))} \\
 &\lesssim  T^{3/4} (R^4+R^{p-1}) \vertiii{u-v}_{Str, T,1}
 \end{split}
\end{align}
for any $u,v$ in the ball of radius $2C^*R$ in $\mathfrak{B}_T$.
Indeed, this estimate can be verified by a similar way as the proof of \eqref{eq.ss14}, so we omit the details.
Choosing $T=T(R)>0$ sufficiently small, then we can see from \eqref{eq.ss15} that $\mathfrak{L}$ is a contraction mapping in the ball of radius $2RC^*$ in $\mathfrak{B}_T,$ which means that $\mathfrak{L}$ has a unique fixed point $u$ in $\mathfrak{B}_T.$ This in turn yields the local existence and uniqueness of solutions to the Cauchy problem for \eqref{CSS}. We now define that $T_{\textnormal{max}}$ is the maximum existence time of the solution $u$ with the initial datum $u_0$, then the blow-up alternative necessarily follows. By Lemmas \ref{lem2.1} and \ref{lem2.2}, it is not difficult to derive the uniform continuity of the solution mapping. In addition, the conservation laws can be obtained easily by standard ways, see for example \cite{Ca}. Thus we have completed the proof.
\end{proof}

\section{Proof of theorems \ref{thm.scattering1}} \label{proof23}

In this section, we are going to prove Theorem \ref{thm.scattering1}. Above all, for $\lambda>0$, let us introduce a scaling of $u \in H^1(\mathbb{R}^2)$ as
$$
u_{\lambda}(x)=\lambda u(\lambda x) \quad \text{for} \,\,  x \in \mathbb{R}^2.
$$
It is easy to check that $\| u_{\lambda}\| _{L^2(\mathbb{R}^N)}=\| u\| _{L^2(\mathbb{R}^N)}$,
\begin{align}  \label{ide.s}
S(u_{\lambda})=\frac{\lambda^2}{2} \left(\| D_1 u\|  _{L^2(\mathbb{R}^2)}^2 + \| D_2 u\| _{L^2(\mathbb{R}^2)}^2\right) + \frac 12 \| u\| _{L^{2}(\mathbb{R}^2)}^2 -\frac{\lambda^{p-1}}{p+1} \| u\| _{L^{p+1}(\mathbb{R}^2)}^{p+1}
\end{align}
and
\begin{align}\label{ide.k}
K(u_{\lambda})=\lambda^2\left(\| D_1 u\|  _{L^2(\mathbb{R}^2)}^2 +  \| D_2 u\| _{L^2(\mathbb{R}^2)}^2\right) -\frac{\lambda^{p-1}(p-1)}{p+1} \| u\| _{L^{p+1}(\mathbb{R}^2)}^{p+1}.
\end{align}
In addition, we see that
\begin{align} \label{ide.sd1}
\frac{d }{d \lambda} S(u_{\lambda}){\big\vert_{\lambda=1}}=K(u).
\end{align}

\subsection{Global existence of solutions} We start with showing the global existence of solutions to the Cauchy problem for \eqref{CSS} with initial data in $\mathcal{K}^+$.

\begin{lem} \label{lem.gl}
Let $p>3$, then we have that $d>0$, where
\begin{align*}
d= \inf \left\{ S(u): u \in H^1_{rad}(\mathbb{R}^2)  \backslash \{0\}, \, K(u)=0\right\}.
\end{align*}
\end{lem}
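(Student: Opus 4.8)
The plan is to use the constraint $K(u)=0$ to rewrite $S(u)$ as a sum of two manifestly nonnegative terms, and then invoke the (diamagnetic) Gagliardo--Nirenberg inequality \eqref{MGN} to prevent both terms from degenerating at once. First I would record that, writing $\|Du\|^2 := \|D_1u\|_{L^2(\R^2)}^2 + \|D_2 u\|_{L^2(\R^2)}^2$, the constraint $K(u)=0$ reads $\|Du\|^2 = \frac{p-1}{p+1}\|u\|_{L^{p+1}(\R^2)}^{p+1}$, so that $\frac{1}{p+1}\|u\|_{L^{p+1}(\R^2)}^{p+1} = \frac{1}{p-1}\|Du\|^2$. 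Substituting this into the definition of $S$ gives, for every admissible $u$,
\begin{align*}
S(u) = \frac{p-3}{2(p-1)}\,\|Du\|^2 + \frac12\,\|u\|_{L^2(\R^2)}^2.
\end{align*}
Since $p>3$, both coefficients are strictly positive, whence $S(u)\ge 0$ on the constraint; the real point is to rule out the infimum being $0$.

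Next I would invoke \eqref{MGN} with $q=p+1$, i.e. $\|u\|_{L^{p+1}(\R^2)}^{p+1} \lesssim \|Du\|^{p-1}\,\|u\|_{L^2(\R^2)}^2$. Combining this with the constraint $\|Du\|^2=\frac{p-1}{p+1}\|u\|_{L^{p+1}(\R^2)}^{p+1}$ and dividing by $\|Du\|^2$ — which is legitimate since $K(u)=0$ with $u\neq 0$ forces $\|u\|_{L^{p+1}(\R^2)}>0$ and hence $\|Du\|>0$ — yields a lower bound of the form
\begin{align*}
\|Du\|^{p-3}\,\|u\|_{L^2(\R^2)}^2 \ge c_0 > 0,
\end{align*}
where $c_0$ depends only on $p$ and the Gagliardo--Nirenberg constant. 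Here the hypothesis $p>3$ enters a second time, guaranteeing that the exponent $p-3$ is positive.

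Finally I would conclude by contradiction. If $d=0$, choose a minimizing sequence $(u_n)$ with $K(u_n)=0$ and $S(u_n)\to 0$. Because both terms in the above formula for $S(u_n)$ are nonnegative, $S(u_n)\to 0$ forces $\|Du_n\|\to 0$ and $\|u_n\|_{L^2(\R^2)}\to 0$ simultaneously, whence $\|Du_n\|^{p-3}\,\|u_n\|_{L^2(\R^2)}^2\to 0$ (using $p-3>0$), contradicting the lower bound $c_0>0$. Therefore $d>0$.

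The hard part will be the single substantive observation that \eqref{MGN} forces the scale-invariant-type quantity $\|Du\|^{p-3}\,\|u\|_{L^2(\R^2)}^2$ to stay bounded away from zero along the constraint surface; everything else is bookkeeping. The only genuine care required is to track the two places where $p>3$ is used — positivity of the coefficient $\frac{p-3}{2(p-1)}$ and positivity of the exponent $p-3$ — since both are essential and either failing would allow $d=0$.
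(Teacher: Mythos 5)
Your proposal is correct and follows essentially the same route as the paper: you rewrite $S(u)$ on the constraint as $\frac{p-3}{2(p-1)}\|Du\|^2 + \frac12\|u\|_{L^2}^2$ (the paper's identity obtained via $S(u)-\frac{1}{p-1}K(u)$) and use the diamagnetic Gagliardo--Nirenberg inequality \eqref{MGN} to get the lower bound $\|Du\|^{p-3}\|u\|_{L^2}^2 \gtrsim 1$, exactly as in the paper's estimates \eqref{su} and \eqref{ld}. Your explicit contradiction argument at the end merely spells out the step the paper compresses into ``thanks to $p>3$, it yields that $d>0$,'' so the two proofs coincide.
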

\begin{proof}
From \eqref{MGN}, there holds that
\begin{align} \label{inequ}
\| u\| _{L^{p+1}(\mathbb{R}^2)}  \lesssim \left(\| D_1 u\| ^2_{L^2(\mathbb{R}^2)} + \| D_2 u\| ^2_{L^2(\mathbb{R}^2)}\right)^{{\frac{p-1}{2(p+1)}}} \| u\| _{L^2(\mathbb{R}^2)}^{\frac{2}{p+1}}.
\end{align}
Thus, for any $u \in H^1_{rad}(\mathbb{R}^2) \backslash \{0\}$ with $K(u)=0$, we get that
\begin{align*}
\| D_1 u\| ^2_{L^2(\mathbb{R}^2)} + \| D_2 u\| ^2_{L^2(\mathbb{R}^2)} \lesssim \left(\| D_1 u\| ^2_{L^2(\mathbb{R}^2)} + \| D_2 u\| ^2_{L^2(\mathbb{R}^2)}\right)^{\frac{p-1}{2}} \| u\| _{L^2(\mathbb{R}^2)}^{2},
\end{align*}
which gives that
\begin{align} \label{ld}
\left(\| D_1 u\| ^2_{L^2(\mathbb{R}^2)} + \| D_2 u\| ^2_{L^2(\mathbb{R}^2)}\right)^{\frac{p-3}{2}} \| u\| _{L^2(\mathbb{R}^2)}^{2} \gtrsim 1.
\end{align}
On the other hand, for any $u \in H^1_{rad}(\mathbb{R}^2) \backslash \{0\}$ with $K(u)=0$, we know that
\begin{align} \label{su}
\begin{split}
S(u)&=S(u) - \frac{1}{p-1} K(u) \\
&= \frac {p-3}{2(p-1)} \left(\| D_1 u\| ^2_{L^2(\mathbb{R}^2)} + \| D_2 u\| ^2_{L^2(\mathbb{R}^2)}\right) + \frac 12 \| u\| ^2_{L^2(\mathbb{R}^2)} .
\end{split}
\end{align}
Thanks to $p>3$, then it yields from \eqref{ld} and \eqref{su} that $d>0$, and the proof is completed.
\end{proof}

\begin{proof}[Proof of Theorem \ref{thm.scattering1} $\textnormal{i)}$] Let us recall that
$$
\mathcal{K}^+= \left\{u  \in H^1_{rad}(\mathbb{R}^2) : S(u) <d, \, K(u)  >0\right\}.
$$
We first prove that $\mathcal{K}^+$ is invariant under the flow of the Cauchy problem for \eqref{CSS}, i.e. if $u_0 \in \mathcal{K}^+$, then $u(t) \in \mathcal{K}^+$ for any $t \in [0, T_{\textnormal{max}})$. We suppose by contradiction that there exists $t_0  \in (0, T_{\textnormal{max}})$ such that $u(t_0) \notin \mathcal{K}^+$. Since $u_0 \in \mathcal{K}^+$, from the conservation laws, then we know that $S(u(t_0))=S(u_0)<d$. Therefore, there holds that $K(u(t_0)) \leq 0$, because of $u(t_0) \notin \mathcal{K}^+$. Since $u \in C([0, T_{\textnormal{max}}), H^1_{rad}(\mathbb{R}^2))$ and $K(u_0)>0$, then there exists $ t^* \in (0, t_0]$ such that $K(u(t^*))=0$, from which we get that $d \leq S(u(t^*))$. By using again the conservation laws, then we obtain that $d \leq S(u_0)$. This is impossible, because of $u_0 \in \mathcal{K}^+$. Hence we obtain that $\mathcal{K}^+$ is invariant. As a consequence, we have that $K(u(t))>0$ for any $t \in [0, T_{\textnormal{max}})$. This indicates that, for any $t \in [0, T_{\textnormal{max}})$,
\begin{align} \label{dinequ}
\begin{split}
d > S(u(t)) & > S(u(t)) -\frac{1}{p-1} K(u(t)) \\
&= \frac {p-3}{2(p-1)} \left(\| D_1 u(t)\| ^2_{L^2(\mathbb{R}^2)} + \| D_2 u(t)\| ^2_{L^2(\mathbb{R}^2)}\right) + \frac 12 \| u(t)\| ^2_{L^2(\mathbb{R}^2)},
\end{split}
\end{align}
Since $p>3$, then it follows from \eqref{dinequ} that
\begin{align} \label{bounded}
\| D_1 u(t)\| ^2_{L^2(\mathbb{R}^2)}+ \| D_2 u(t)\| ^2_{L^2(\mathbb{R}^2)} \, dx \lesssim 1 \quad \mbox{for any} \,\,\, t \in [0, T_{\textnormal{max}}).
\end{align}
Taking into account \eqref{MGN} and conversation of the mass, then we derive from \eqref{bounded} that $ \| u(t)\| _{L^q(\mathbb{R}^2)} \lesssim 1$ for any $t \in [0, T_{\textnormal{max}})$.
In virtue of Cauchy-Schwarz's inequality, \eqref{bounded} and Lemma \ref{lem.ineq}, then we derive that
\begin{align*}
\| \nabla u(t)\| _{L^2(\mathbb{R}^2)}^2 & = \| \nabla u(t) + \textnormal{i} {A} u(t) - \textnormal{i} {A}u(t)\| _{L^2(\mathbb{R}^2)}^2  \\
& \lesssim  \| D_1 u(t)\|  _{L^2(\mathbb{R}^2)}^2 +  \| D_2 u(t)\| _{L^2(\mathbb{R}^2)}^2 +  \| {A} u(t)\| _{L^2(\mathbb{R}^2)}^2 \\
& \lesssim 1,
\end{align*}
where $A=(A_1, A_2)$. According to Theorem \ref{thm.wellposedness}, then we deduce that $u$ exists globally in time, i.e. $T_{\textnormal{max}}=+\infty$. This completes the proof.
\end{proof}

\subsection{Scattering of solutions} Let us now investigate scattering of solutions to the Cauchy problem for \eqref{CSS} with initial data in $\mathcal{K}^+$. To do so, we first need to establish the following crucial lemma.

\begin{lem} \label{lem.below}
Let $p>3$ and let $u \in C([0, + \infty), H^1_{rad}(\mathbb{R}^2)) $ be the solution to the Cauchy problem for \eqref{CSS} with initial datum $u_0 \in \mathcal{K}^+$, then
$$
K(u(t)) \gtrsim \| D_1 u(t)\|  _{L^2(\mathbb{R}^2)}^2 + \| D_2 u(t)\| _{L^2(\mathbb{R}^2)}^2 \quad \mbox{for any} \,\,\, t \in [0, + \infty).
$$
\end{lem}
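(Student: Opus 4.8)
The plan is to exploit the scaling $u_\lambda(x)=\lambda u(\lambda x)$ together with the variational definition of $d$. First I recall that, by the invariance of $\mathcal{K}^+$ established in the proof of Theorem \ref{thm.scattering1} i), the solution satisfies $u(t)\in\mathcal{K}^+$ for every $t$, so that $K(u(t))>0$, $S(u(t))=S(u_0)<d$, and, by \eqref{bounded}, there is a constant $C_0>0$ independent of $t$ with
\begin{align*}
a(t):=\|D_1 u(t)\|_{L^2(\R^2)}^2+\|D_2 u(t)\|_{L^2(\R^2)}^2\leq C_0.
\end{align*}
Fix $t$ and abbreviate $a=a(t)$, $b=\|u(t)\|_{L^2(\R^2)}^2$ and $c=\|u(t)\|_{L^{p+1}(\R^2)}^{p+1}$; here $b$ is conserved and $a,c>0$ because $u(t)\neq0$. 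From \eqref{ide.k} we have $K(u_\lambda)=\lambda^2 a-\frac{(p-1)\lambda^{p-1}}{p+1}c$, so that $K(u_\lambda)/\lambda^2=a-\frac{p-1}{p+1}\lambda^{p-3}c$ is strictly decreasing in $\lambda$ because $p>3$. Hence there is a unique $\lambda^*=\lambda^*(t)>0$ with $K(u_{\lambda^*})=0$, and $K(u(t))>0$ forces $\lambda^*>1$.

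The second step reduces the claim to a uniform lower bound on $\lambda^*$. Using $K(u_{\lambda^*})=0$ to write $\frac{p-1}{p+1}c=(\lambda^*)^{3-p}a$ and substituting into \eqref{ide.k} at $\lambda=1$ yields the identity
\begin{align*}
K(u(t))=a\left(1-(\lambda^*)^{3-p}\right)=\left(1-(\lambda^*)^{3-p}\right)\left(\|D_1 u(t)\|_{L^2(\R^2)}^2+\|D_2 u(t)\|_{L^2(\R^2)}^2\right).
\end{align*}
Since $\lambda^*>1$ and $3-p<0$, the factor $1-(\lambda^*)^{3-p}$ lies in $(0,1)$; thus it suffices to produce $\lambda_0>1$ independent of $t$ with $\lambda^*\geq\lambda_0$, for then $1-(\lambda^*)^{3-p}\geq 1-\lambda_0^{3-p}=:c_0>0$ and the lemma follows.

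The crux is therefore to bound $\lambda^*$ away from $1$, and this is where the energy gap $d-S(u_0)>0$ and the a priori bound $a\leq C_0$ enter. Because $K(u_{\lambda^*})=0$ and $u_{\lambda^*}\in H^1_{rad}(\R^2)\setminus\{0\}$, the definition of $d$ gives $S(u_{\lambda^*})\geq d$. Substituting $\frac{p-1}{p+1}c=(\lambda^*)^{3-p}a$ into \eqref{ide.s} produces
\begin{align*}
S(u_{\lambda^*})=\frac{p-3}{2(p-1)}(\lambda^*)^2 a+\frac12 b,
\end{align*}
whereas the algebraic identity contained in \eqref{dinequ} reads $S(u(t))-\frac{1}{p-1}K(u(t))=\frac{p-3}{2(p-1)}a+\frac12 b$. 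Subtracting the two and using $K(u(t))>0$ together with $S(u(t))=S(u_0)<d$, I obtain
\begin{align*}
\frac{p-3}{2(p-1)}\,a\left((\lambda^*)^2-1\right)=S(u_{\lambda^*})-S(u_0)+\frac{1}{p-1}K(u(t))>d-S(u_0)>0.
\end{align*}
Combined with $a\leq C_0$ this gives $(\lambda^*)^2-1>\frac{2(p-1)(d-S(u_0))}{(p-3)C_0}$, a strictly positive bound independent of $t$, which furnishes the required $\lambda_0>1$ and finishes the argument. The only genuine obstacle is precisely this uniform separation of $\lambda^*$ from $1$: it rests essentially on $p>3$ (which makes the scaling monotone and the exponent $3-p$ negative), on the strict gap $S(u_0)<d$, and on the uniform kinetic bound \eqref{bounded}; the remaining manipulations are elementary.
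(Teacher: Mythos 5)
Your argument is correct, and it takes a genuinely different route from the paper's own proof. The paper splits $[0,\infty)$ into the set $T_1$ where $K(u(t)) \geq \frac{(p-1)(p-3)}{2(p+1)}\| u(t)\| _{L^{p+1}(\R^2)}^{p+1}$ and its complement $T_2$: on $T_1$ the bound is pure algebra, while on $T_2$ the authors pass to the unique $\lambda_v>1$ with $K(v_{\lambda_v})=0$, prove the upper bound $\lambda_v<\left(\frac{p-1}{2}\right)^{1/(p-3)}$, show that $\lambda \mapsto S(v_\lambda)$ is concave for $\lambda \geq 1$, and chain these into $\bigl(\left(\frac{p-1}{2}\right)^{1/(p-3)}-1\bigr)K(v) \geq S(v_{\lambda_v})-S(v) \geq d-S(u_0)$, so that $K(u(t)) \gtrsim d-S(u_0)$ on $T_2$; a final step (since $K>0$ forces the covariant kinetic term to be at most $\alpha S(u_0)$, hence $\lesssim d-S(u_0)$) merges the two regimes. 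You instead work at every $t$ with the unique zero $\lambda^*>1$ of $\lambda \mapsto K(u_\lambda)$, which the two-monomial structure of \eqref{ide.k} lets you handle in closed form: writing $a=\| D_1 u(t)\| _{L^2(\R^2)}^2+\| D_2 u(t)\| _{L^2(\R^2)}^2$, the identity $K(u(t))=a\bigl(1-(\lambda^*)^{3-p}\bigr)$ reduces the lemma to a uniform lower bound $\lambda^*\geq\lambda_0>1$, and that bound falls out of $S(u_{\lambda^*})\geq d$, the exact formula $S(u_{\lambda^*})=\frac{p-3}{2(p-1)}(\lambda^*)^2a+\frac 12 \| u(t)\| _{L^2(\R^2)}^2$, conservation, and the a priori kinetic bound \eqref{bounded} --- all of which are available at this point in the paper. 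What your route buys is economy: no case distinction, no concavity or mean-value step, and no upper bound on the rescaling parameter; the implicit constant depends on $u_0$ only through $d-S(u_0)$ and the kinetic bound, exactly as in the paper, so nothing is lost in the later applications such as \eqref{est.phiu}. What the paper's less explicit scheme buys is robustness: the concavity argument would survive if $K(u_\lambda)$ contained further powers of $\lambda$ (so that $K(u_\lambda)=0$ could not be solved explicitly), whereas your computation exploits that $K(u_\lambda)$ is exactly a difference of two monomials in $\lambda$.
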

\begin{proof}
From \eqref{ide.s} and \eqref{ide.k}, we first get that
\begin{align} \label{ind.sd}
\begin{split}
\frac{d^2 }{d \lambda^2} S((u(t))_{\lambda})&=\| D_1 u(t)\|  _{L^2(\mathbb{R}^2)}^2 + \| D_2 u(t)\| _{L^2(\mathbb{R}^2)}^2 -\frac{(p-1)(p-2)}{p+1} \lambda^{p-3}\| u(t)\| _{L^{p+1}(\mathbb{R}^2)}^{p+1} \\
&=-\frac {1} {\lambda^{2} }K((u(t))_{\lambda}) + \frac {2} {\lambda^{2}} \left(K((u(t))_{\lambda}) -\frac{(p-1)(p-3)}{2(p+1)} \| (u(t))_{\lambda}\| _{L^{p+1}(\mathbb{R}^2)}^{p+1} \right).
\end{split}
\end{align}
For any $t \geq 0$, we now define
\begin{align*}
T_1:=\left\{t \in [0, \infty) : K(u(t)) \geq \frac{(p-1)(p-3)}{2(p+1)} \| u(t)\| _{L^{p+1}(\mathbb{R}^2)}^{p+1}\right\}
\end{align*}
and $T_2:=[0, \infty)\backslash T_1$.  If $t \in T_1$, then
\begin{align*}
K(u(t)) &\geq \frac{(p-1)(p-3)}{2(p+1)} \| u(t)\| _{L^{p+1}(\mathbb{R}^2)}^{p+1} \\
&= -\frac{p-3}{2} K(u(t)) + \frac{p-3}{2}\left(\| D_1 u(t)\|  _{L^2(\mathbb{R}^2)}^2 + \| D_2 u(t)\| _{L^2(\mathbb{R}^2)}^2\right).
\end{align*}
This yields that
\begin{align} \label{k1}
K(u(t))  \geq \frac{p-3}{p-1}\left(\| D_1 u(t)\|  _{L^2(\mathbb{R}^2)}^2 + \| D_2 u(t)\| _{L^2(\mathbb{R}^2)}^2\right) \quad \mbox{for any} \,\, t \in T_1.
\end{align}
If $t \in T_2$, then
\begin{align} \label{ineq.ku}
K(u(t)) < \frac{(p-1)(p-3)}{2(p+1)} \| u(t)\| _{L^{p+1}(\mathbb{R}^2)}^{p+1}.
\end{align}
In this case, for simplicity we set $v=u(t)$ with $t \in T_2$. Then \eqref{ineq.ku} leads to
\begin{align} \label{ineq.u}
\| D_1 v\|  _{L^2(\mathbb{R}^2)}^2 + \| D_2 v\| _{L^2(\mathbb{R}^2)}^2 <  \frac{(p-1)^2}{2(p+1)} \| v\| _{L^{p+1}(\mathbb{R}^2)}^{p+1}.
\end{align}
Since $u_0 \in \mathcal{K}^+$ and $\mathcal{K}^+$ is invariant, then we know that $v \in \mathcal{K}^+$, which means that $K(v) >0$. Hence it is not hard to deduce that there exists $\lambda_v>1$ such that $K(v_{\lambda_v})=0$, namely,
\begin{align} \label{ide.lv}
\| D_1 v\|  _{L^2(\mathbb{R}^2)}^2 + \| D_2 v\| _{L^2(\mathbb{R}^2)}^2=\frac{p-1}{p+1} {\lambda_v}^{p-3} \| v\| _{L^{p+1}(\mathbb{R}^2)}^{p+1}.
\end{align}
Combining \eqref{ineq.u} with \eqref{ide.lv}, then we derive that
\begin{align} \label{ineq.lambda}
\lambda_v < \left(\frac{p-1}{2}\right)^{\frac{1}{p-3}}.
\end{align}
Due to $p>3$, then
\begin{align*}
\frac{d}{d\lambda} \left(\frac{K(v_{\lambda})}{\lambda^2}-\frac{ \| v_{\lambda}\| _{L^{p+1}(\mathbb{R}^2)}^{p+1}}{\lambda^2}\right)&=-\frac{(p-1)(p-3)}{p+1} \lambda^{p-4}\| v\| _{L^{p+1}(\mathbb{R}^2)}^{p+1}-(p-3) \lambda^{p-4}\| v\| _{L^{p+1}(\mathbb{R}^2)}^{p+1}<0.
\end{align*}
Therefore, we can conclude from \eqref{ineq.ku} that, for any $\lambda \geq 1$,
$$
\frac {2} {\lambda^{2}} \left(K(v_{\lambda}) -\frac{(p-1)(p-3)}{2(p+1)} \| v_{\lambda}\| _{L^{p+1}(\mathbb{R}^2)}^{p+1} \right)  \leq {2} \left(K(v) -\frac{(p-1)(p-3)}{2(p+1)} \| v\| _{L^{p+1}(\mathbb{R}^2)}^{p+1} \right)<0,
$$
Coming back to \eqref{ind.sd}, then we get that
\begin{align} \label{ineq.d2}
\frac{d^2 }{d \lambda^2}  S(v_{\lambda}) < 0 \quad \mbox{for any} \,\,\, \lambda \geq 1.
\end{align}
This together with \eqref{ineq.lambda} shows that
\begin{align*}
\left(\left(\frac{p-1}{2}\right)^{\frac{1}{p-3}}-1\right) K(v) &\geq (\lambda_v-1) \frac{d}{d \lambda} S(v_{\lambda})\big\vert_{\lambda=1}  \geq S(v_{\lambda_v})-S(v) \geq d -S(u_0),
\end{align*}
where the last inequality benefits from the fact that $K(v_{\lambda_v})=0$ and the conservation laws. As a result, we have that
\begin{align} \label{k2}
K(u(t))=K(v) \gtrsim d -S(u_0) \quad \mbox{for any} \,\, t \in T_2.
\end{align}
Consequently, combining \eqref{k1} and \eqref{k2}, we arrive at
\begin{align} \label{ineq.b1}
K(u(t)) \gtrsim \min \left\{\| D_1 u(t)\|  _{L^2(\mathbb{R}^2)}^2 + \| D_2 u(t)\| _{L^2(\mathbb{R}^2)}^2, \, d-S(u_0)\right\} \quad \mbox{for any} \,\,\, t \in [0, \infty).
\end{align}
To complete the proof, we only need to show that
\begin{align}\label{eq.fp3}
d-S(u_0) \gtrsim \| D_1 u(t)\|  _{L^2(\mathbb{R}^2)}^2 + \| D_2 u(t)\| _{L^2(\mathbb{R}^2)}^2 \quad \mbox{for any} \,\,t \in [0, \infty).
\end{align}
Since $K(u(t))>0$, then
\begin{align} \label{ineq.b2}
\| D_1 u(t)\|  _{L^2(\mathbb{R}^2)}^2 + \| D_2 u(t)\| _{L^2(\mathbb{R}^2)}^2 < \alpha S(u(t)) \quad \mbox{for any} \,\, t \in [0, + \infty) , \, \, \alpha \geq \frac{2(p-1)}{p-3}.
\end{align}
Indeed, if $\alpha \geq \frac{2(p-1)}{p-3}$, then
$$
\frac{2 \alpha}{(\alpha-2)(p+1)} \leq \frac{p-1}{p+1}.
$$
This implies that
\begin{align*}
&\alpha S(u(t)) - \left(\| D_1 u(t)\|  _{L^2(\mathbb{R}^2)}^2 + \| D_2 u(t)\| _{L^2(\mathbb{R}^2)}^2\right) \\
&= \frac{\alpha-2}{2}\left(\| D_1 u(t)\|  _{L^2(\mathbb{R}^2)}^2 + \| D_2 u(t)\| _{L^2(\mathbb{R}^2)}^2\right)-\frac{\alpha}{p+1}\| u(t)\| _{L^{p+1}(\mathbb{R}^2)}^{p+1} \\
&=\frac{\alpha-2}{2} \left(\| D_1 u(t)\|  _{L^2(\mathbb{R}^2)}^2 + \| D_2 u(t)\| _{L^2(\mathbb{R}^2)}^2- \frac{2 \alpha}{(\alpha-2)(p+1)}\| u(t)\| _{L^{p+1}(\mathbb{R}^2)}^{p+1}\right)>0,
\end{align*}
from which \eqref{ineq.b2} necessarily follows. Applying again the fact that $K(u(t)) > 0$ for ant $t \geq 0$, then it yields from \eqref{ineq.b2} that
\begin{align*}
\| D_1 u(t)\|  _{L^2(\mathbb{R}^2)}^2 + \| D_2 u(t)\| _{L^2(\mathbb{R}^2)}^2 < \alpha S(u(t)) =\alpha S(u_0) =C_p (d-S(u_0)) \quad \mbox{for any} \,\,t \in [0, \infty),
\end{align*}
where $C_p>0$ is defined by
$$
C_p:=\frac{\alpha S(u_0)} {d-S(u_0)}.
$$
Accordingly, we get that
$$
\| D_1 u(t)\|  _{L^2(\mathbb{R}^2)}^2 + \| D_2 u(t)\| _{L^2(\mathbb{R}^2)}^2  \lesssim d-S(u_0) \quad \mbox{for any} \,\,t \in [0, \infty).
$$
This gives \eqref{eq.fp3} and completes the proof.
\end{proof}

\begin{lem} \label{lem.stest}
Let $p>3$ and let $u\in C([0, + \infty), H^1_{rad}(\mathbb{R}^2)) $ be the solution to the Cauchy problem for \eqref{CSS} with initial datum $u_0 \in \mathcal{K}^+$, then, for any $T>0$,
\begin{align*}
& \int_{0}^T \int_{\mathbb{R}^2} \mid u\mid ^{p+1} \, dx dt +\int_{0}^T \int_{\mathbb{R}^2} \left(\frac{A_{\theta}(\mid u\mid ^2)}{r}\right)^2 \mid u\mid ^2  \, dxdt  \\ \nonumber & + \int_{0}^T \int_{\mathbb{R}^2} A_0(\mid u\mid ^2)\mid u\mid ^2 \, dx dt + \int_{0}^T \int_{\mid x\mid  \leq T^{\alpha}} \mid \nabla u\mid ^2 \, dx dt  \lesssim T^{\alpha},
\end{align*}
where $\alpha= 1/(1+\sigma)$ and $\sigma=\min \left\{2, \frac{p-1}{2}\right\}$.
\end{lem}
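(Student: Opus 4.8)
The plan is to run a localized virial (Morawetz) argument at the single spatial scale $R=T^{\alpha}$, using the uniform coercivity of Lemma~\ref{lem.below} to make the derivative of the virial quantity control, from below, precisely the four terms on the left-hand side. The mechanism is the one of \cite{ADM} for the classical focusing NLS, but the gauge-covariant structure of \eqref{sys1} forces us to carry the nonlocal self-interactions $A_0(|u|^2)$ and $A_\theta(|u|^2)$ through the computation and to check their signs. Throughout I use that, by part~i) of Theorem~\ref{thm.scattering1} (global existence) together with mass conservation and Lemma~\ref{lem.below}, the solution obeys $\|u(t)\|_{H^1(\mathbb{R}^2)}\lesssim 1$ and $\|D_1u(t)\|_{L^2}^2+\|D_2u(t)\|_{L^2}^2\lesssim 1$ uniformly in $t$.

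First I would fix a radial weight $a_R$ with $a_R(x)=|x|^2$ for $|x|\le R$, chosen convex (so that its Hessian is positive semidefinite everywhere), with $|\nabla a_R|\lesssim R$, and with $\Delta^2 a_R$ supported in the annulus $R\le|x|\le 2R$ and satisfying $|\Delta^2 a_R|\lesssim R^{-2}$; such an $a_R$ is obtained by scaling a fixed smooth convex profile. Define the Morawetz action, using the covariant gradient of \eqref{sys1},
\[
M_R(t)=2\,\mathrm{Im}\int_{\mathbb{R}^2}\sum_{j=1,2}\partial_j a_R\,\bar u\,D_j u\,dx .
\]
Since $|\nabla a_R|\lesssim R$, Cauchy--Schwarz and the a priori bounds give the elementary $\sup_t|M_R(t)|\lesssim \|\nabla a_R\|_{L^\infty}\,\|u\|_{L^2}\,\|Du\|_{L^2}\lesssim R$, which will be the only input on the boundary after integrating in time.

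Next I would compute $\frac{d}{dt}M_R$ from the covariant equation $\mathrm{i}\partial_t u+D_jD_j u=A_0 u-|u|^{p-1}u$. On the bulk $|x|\le R$, where $a_R=|x|^2$, this reproduces the exact virial: the Hessian term yields $\,8\int_{|x|\le R}|Du|^2=8\int_{|x|\le R}|\nabla u|^2+8\int_{|x|\le R}(A_\theta/r)^2|u|^2\,$ (recall $|Du|^2=|\nabla u|^2+(A_\theta/r)^2|u|^2$), the nonlinearity contributes a multiple of $\int\Delta a_R\,|u|^{p+1}$, and the potential $A_0 u$ contributes a term comparable to $\int A_0(|u|^2)|u|^2$; convexity of $a_R$ discards the annulus kinetic contribution with a favorable sign, and the remaining pieces are the error terms $\int\Delta^2 a_R\,|u|^2$ and the tails over $|x|>R$. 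Using the explicit signs $A_0(|u|^2)\ge 0$ and $A_\theta(|u|^2)\le 0$ from \eqref{eq.ss2a}, together with the coercivity $K(u)\gtrsim\|D_1u\|_{L^2}^2+\|D_2u\|_{L^2}^2$ of Lemma~\ref{lem.below} to dominate the focusing nonlinearity (which also gives $\int|u|^{p+1}\lesssim K(u)$), I expect to arrive at
\begin{align*}
\frac{d}{dt}M_R(t)&\gtrsim \int_{|x|\le R}|\nabla u|^2\,dx+\int_{\mathbb{R}^2}\left(\frac{A_\theta(|u|^2)}{r}\right)^2|u|^2\,dx\\
&\quad+\int_{\mathbb{R}^2}A_0(|u|^2)|u|^2\,dx+\int_{\mathbb{R}^2}|u|^{p+1}\,dx-C\,\mathrm{Err}(t),
\end{align*}
where the three non-kinetic terms are recovered globally by writing them as bulk (from the virial) plus tail.

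Finally I would bound the error at the sharp rate $\mathrm{Err}(t)\lesssim R^{-\sigma}$. The biharmonic piece is controlled by $|\Delta^2 a_R|\lesssim R^{-2}$ and mass conservation, giving $R^{-2}$. For the two nonlocal tails I use the pointwise bounds $|A_\theta(|u|^2)(r)|\lesssim \|u\|_{L^2}^2\lesssim 1$ and, consequently, $|A_0(|u|^2)(r)|\lesssim r^{-2}$, so that $\int_{|x|>R}(A_\theta/r)^2|u|^2+\int_{|x|>R}A_0|u|^2\lesssim R^{-2}\|u\|_{L^2}^2\lesssim R^{-2}$. For the nonlinear tail the Strauss inequality \eqref{Strauss} gives $|u(x)|^{p-1}\lesssim |x|^{-(p-1)/2}$, whence $\int_{|x|>R}|u|^{p+1}\le R^{-(p-1)/2}\int_{|x|>R}|u|^2\lesssim R^{-(p-1)/2}$. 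Collecting these yields $\mathrm{Err}(t)\lesssim R^{-2}+R^{-(p-1)/2}=R^{-\sigma}$ with $\sigma=\min\{2,(p-1)/2\}$. Integrating in time then gives $\int_0^T(\text{good terms})\lesssim \sup_t|M_R|+T\,R^{-\sigma}\lesssim R+T\,R^{-\sigma}$, and the choice $R=T^{1/(1+\sigma)}=T^{\alpha}$ balances the two contributions to produce the claimed bound $\lesssim T^{\alpha}$. I expect the main obstacle to be the virial identity itself: deriving the localized expression for $\frac{d}{dt}M_R$ in the gauge-covariant Chern--Simons setting, tracking the self-interaction terms generated by $A_0$ and $A_\theta$ (absent in the NLS of \cite{ADM}), and verifying that they enter with signs compatible with the coercivity of Lemma~\ref{lem.below}; the transition-region error analysis is comparatively routine once the decay of $A_\theta$, $A_0$ and the Strauss bound are in hand.
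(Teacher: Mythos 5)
Your scaffolding coincides with the paper's strategy (localized virial at the single scale $R=T^{\alpha}$, boundary term $\sup_t|M_R|\lesssim R$, error terms $O(R^{-2})+O(R^{-(p-1)/2})=O(R^{-\sigma})$ via mass conservation, $\|A_\theta(|u|^2)\|_{L^\infty}\lesssim 1$, $|A_0(|u|^2)|\lesssim r^{-2}$ and the Strauss inequality, then optimize $R=T^{\alpha}$), but the central step of your plan does not go through as stated. The virial derivative only produces the \emph{localized} quantity $\int\varphi_R\bigl(|D_1u|^2+|D_2u|^2-\tfrac{p-1}{p+1}|u|^{p+1}\bigr)$ plus errors, and you propose to make it coercive by invoking Lemma~\ref{lem.below} for $u$ itself. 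Global coercivity $K(u)\gtrsim \|D_1u\|_{L^2}^2+\|D_2u\|_{L^2}^2$ does not transfer to the localized expression: writing $\int\varphi_R|Du|^2=\int|Du|^2-\int(1-\varphi_R)|Du|^2$, the kinetic tail $\int_{|x|>R}|Du|^2$ is only $O(1)$ (there is no Strauss-type decay for $|\nabla u|^2$, unlike for $|u|^{p+1}$), so the bulk term cannot be compared to $K(u)$ up to $O(R^{-\sigma})$, and your claimed lower bound for $\frac{d}{dt}M_R$ is unjustified. The paper's resolution is precisely the missing idea: set $\phi_R^2=\varphi_R$, prove that the truncated function satisfies $\phi_R u(t)\in\mathcal{K}^+$ \emph{uniformly in $t$} for $R$ large (this uses the strict gap $S(u(t))\le(1-\delta)d$ from the conservation laws together with the truncation estimates \eqref{est.grad}, \eqref{est.a}, \eqref{est.nt} and the variational structure), and then run the argument of Lemma~\ref{lem.below} on $\phi_R u$ to obtain the localized coercivity \eqref{est.phiu}, namely $K(\phi_R u)\gtrsim \int |D_1(\phi_Ru)|^2+|D_2(\phi_Ru)|^2\ \ge\ \tfrac{p-1}{p+1}\int|\phi_Ru|^{p+1}$. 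Only after this does the virial derivative dominate $\int_{B_R}|u|^{p+1}$ and $\int_{B_R}|\nabla u|^2+(A_\theta/r)^2|u|^2$ up to $O(R^{-\sigma})$.

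A second, structural, error: the potential $A_0u$ does \emph{not} contribute a term comparable to $\int A_0(|u|^2)|u|^2$ to the virial derivative. Using the constraint $\partial_r A_0=\tfrac{A_\theta}{r}|u|^2$, its contribution is $-\int A_\theta |u|^4$, which cancels against part of the contribution of the potential $(A_\theta/r)^2$ (whose radial derivative, via $\partial_rA_\theta=-\tfrac r2|u|^2$, produces $+\int A_\theta|u|^4+2\int (A_\theta/r)^2|u|^2$); the net gauge contribution assembles into $|Du|^2=|\nabla u|^2+(A_\theta/r)^2|u|^2$, with no $A_0$ term left over --- consistent with \cite[Lemma 6.2]{Gou} and with the fact that $K$ contains no $A_0$ term. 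The bound on $\int_0^T\int A_0(|u|^2)|u|^2$ in the lemma is obtained in the paper by a separate integration-by-parts identity,
\begin{align*}
\int_{\mathbb{R}^2}A_0(|u|^2)|u|^2\,dx \;=\; 2\int_{\mathbb{R}^2}\left(\frac{A_\theta(|u|^2)}{r}\right)^2|u|^2\,dx,
\end{align*}
which reduces it to the $A_\theta$ term already controlled; your proposal contains no substitute for this identity, so even granting your virial computation the $A_0$ part of the conclusion would remain unproved.
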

\begin{proof}
Let $\varphi \in C^{\infty}(\mathbb{R}^+, [0, 1])$ be such that $\varphi(r)=1$ for any $ 0 \leq r \leq 1$, $\varphi(r)=0$ for any $r \geq 2$ and $\varphi'(r) \leq 0$ for any $r \geq 0$.
Define
$$
\psi(r)= \frac 1 r \int_{0}^r \varphi (\rho) \, d \rho.
$$
Notice first that
\begin{align} \label{ide.dpsi}
\psi'(r)= \frac 1 r \varphi(r) -  \frac {1} {r^2} \int_{0}^r \varphi (\rho) \, d \rho,
\end{align}
then
\begin{align} \label{ide.phi}
r\psi'(r) + \psi(r) = \varphi(r) \quad \mbox{for any} \,\,\, r \geq 0.
\end{align}
In addition, there holds that $\psi'(r)=0$ for any $0 \leq r <1$ and
\begin{align} \label{psidecr}
\psi'(r) \leq 0 \quad \mbox{for any} \,\,\, r \geq 1.
\end{align}
In view of \eqref{ide.dpsi}, one can see that
\begin{align}\label{psiest}
\mid \psi'(r)\mid   \lesssim \frac {1}{r^2} \quad \mbox{for any} \,\,\, r \geq 1.
\end{align}
For $R>0$, we now define
$$
\psi_R(x)=\psi\left(\frac{\mid x\mid }{R}\right) \quad \mbox{for any} \, \, x \in \mathbb{R}^2
$$
and
$$
M_{\psi_R}(u)= \textnormal{Im} \int_{\mathbb{R}^2} \psi_R \left(x_1 D_1 u + x_2 D_2 u\right) \overline{u}\, dx.
$$
Applying \cite[Lemma 6.2]{Gou}, then we derive that
\begin{align} \label{ide.dem}
\begin{split}
\frac{d}{dt}M_{\psi_R}[u]=&2 \int_{\mathbb{R}^2} \mid D_1 u\mid ^2 \left( \psi\left(\frac{\mid x\mid }{R}\right) + \psi'\left(\frac{\mid x\mid }{R}\right) \frac{x_1^2}{R\mid x\mid } \right) \, dx \\
& \quad  +  4 \, \textnormal{Re} \int_{\mathbb{R}^2} \overline{D_1 u} \, D_2 u \, \psi'\left(\frac{\mid x\mid }{R}\right)\frac{x_1 x_2}{\mid x\mid ^2} \, dx \\
&\quad + 2 \int_{\mathbb{R}^2} \mid D_2 u\mid ^2 \left( \psi\left(\frac{\mid x\mid }{R}\right) + \psi'\left(\frac{\mid x\mid }{R}\right)\frac{x_2^2}{R\mid x\mid } \right) \, dx \\
& \quad - \frac{p-1}{p+1} \int_{\mathbb{R}^2} \mid u\mid ^{p+1} \left(\varphi\left(\frac{\mid x\mid }{R}\right)+ \psi\left(\frac{\mid x\mid }{R}\right)\right) \, dx + \mathcal{O}\left(\frac {1}{R^{2}}\right).
\end{split}
\end{align}
In light of \eqref{ide.phi}, we know that
\begin{align*}
\psi\left(\frac{\mid x\mid }{R}\right) + \psi'\left(\frac{\mid x\mid }{R}\right) \frac{x_1^2}{R\mid x\mid } &= \varphi \left(\frac{\mid x\mid }{R}\right) - \psi'\left(\frac{\mid x\mid }{R}\right) \frac{x_2^2}{R\mid x\mid }
\end{align*}
and
\begin{align*}
\psi\left(\frac{\mid x\mid }{R}\right) + \psi'\left(\frac{\mid x\mid }{R}\right) \frac{x_2^2}{R\mid x\mid } &= \varphi \left(\frac{\mid x\mid }{R}\right) - \psi'\left(\frac{\mid x\mid }{R}\right) \frac{x_1^2}{R\mid x\mid }.
\end{align*}
Therefore, from \eqref{ide.dem}, there holds that
\begin{align*}
\frac{d}{dt}M_{\psi_R}[u] &=  2 \int_{\mathbb{R}^2} \varphi \left(\frac{\mid x\mid }{R}\right) \left(\mid D_1 u\mid ^2 + \mid D_2 u\mid  -\frac{p-1}{p
+1} \mid u\mid ^{p+1}\right) \, dx \\
& \quad - 2\int_{\mathbb{R}^2} \frac{1}{R\mid x\mid } \psi'\left(\frac{\mid x\mid }{R}\right) \mid x_2 D_1 u + x_1 D_2 u\mid ^2 \, dx \\
& \quad - \frac{p-1}{p+1} \int_{\mathbb{R}^2} \mid u\mid ^{p+1} \left(\psi\left(\frac{\mid x\mid }{R}\right)-\varphi\left(\frac{\mid x\mid }{R}\right)\right) \, dx + \mathcal{O}\left(\frac{1}{R^2}\right).
\end{align*}
Note that $\psi(r)=\varphi(r)$ for any $0 \leq r \leq 1$, then it yields from the Strauss inequality \eqref{Strauss}, \eqref{ide.phi} and \eqref{psiest} that
\begin{align*}
\int_{\mathbb{R}^2} \mid u\mid ^{p+1} \left(\varphi\left(\frac{\mid x\mid }{R}\right)-\psi\left(\frac{\mid x\mid }{R}\right)\right) \, dx &=\int_{\mathbb{R}^2} \frac{\mid x\mid }{R}\psi'\left(\frac{\mid x\mid }{R}\right)\mid u\mid ^{p+1} \, dx \\
&\lesssim R \int_{\mathbb{R}^2 \backslash B_R(0)} \frac {\mid u\mid ^{p-1}} {\mid x\mid }\mid u\mid ^2\, dx \\
& = \mathcal{O}\left(\frac{1}{R^{\frac{p-1}{2}}}\right).
\end{align*}
On the other hand, from \eqref{psidecr}, we have that $\psi'(r) \leq 0$ for any $r \geq 0$. As a consequence, we obtain that
\begin{align} \label{ineq.m}
\hspace{-1cm}\frac{d}{dt}M_{\psi_R}[u] & \gtrsim  2 \int_{\mathbb{R}^2} \varphi \left(\frac{\mid x\mid }{R}\right) \left(\mid D_1 u\mid ^2 + \mid D_2 u\mid  -\frac{p-1}{p
+1} \mid u\mid ^{p+1}\right) \, dx - \mathcal{O}\left(\frac{1}{R^{\sigma}}\right).
\end{align}
For further discussions, let us now define
$$
\varphi_R(x)=\varphi\left(\frac{\mid x\mid }{R}\right), \quad \phi_R^2(x)=\varphi_R(x)  \quad \mbox{for any} \,\, x \in \mathbb{R}^2.
$$
Next we aim to prove that $\phi_R u \in \mathcal{K}^+$ for $R>0$ sufficiently large. To begin with, making use of the third identity of \eqref{eq.gg3},
we can write
$$
S(\phi_R u)= \frac 12 \int_{\mathbb{R}^2} \mid \nabla (\phi_R u)\mid ^2 + \left(\frac{A_\theta(\mid \phi_R u\mid ^2)}{r}\right)^2\mid \phi_R u\mid ^2  \, dx + \frac 12 \int_{\mathbb{R}^2} \mid \phi_R u\mid ^2 \, dx - \frac{1}{p+1} \int_{\mathbb{R}^2} \mid \phi_R u\mid ^{p+1} \, dx
$$
and
$$
K(\phi_R u)= \int_{\mathbb{R}^2} \mid \nabla (\phi_R u)\mid ^2 + \left(\frac{A_\theta(\mid \phi_R u\mid ^2)}{r} \right)^2\mid \phi_R u\mid ^2  \, dx - \frac{p-1}{p+1} \int_{\mathbb{R}^2} \mid \phi_R u\mid ^{p+1} \, dx.
$$
From the definition of $\phi_R$, we are able to derive that $\| \phi_R u\| _{L^2(\mathbb{R}^2)}^2 \leq \| u\| _{L^2(\mathbb{R}^2)}^2$ and
\begin{align} \label{est.grad}
\begin{split}
\int_{\mathbb{R}^2}\mid \nabla(\phi_R u)\mid ^2 \, dx &= \int_{\mathbb{R}^2} \varphi_R \mid \nabla u\mid ^2 \,dx - \int_{\mathbb{R}^2} \phi_R \Delta \phi_R \mid u\mid ^2 \, dx \\
&= \int_{\mathbb{R}^2} \varphi_R \mid \nabla u\mid ^2 \,dx + \mathcal{O}\left(\frac{1}{R^2}\right) \\
& \leq \int_{\mathbb{R}^2} \mid \nabla u\mid ^2 \,dx + \mathcal{O}\left(\frac{1}{R^2}\right).
\end{split}
\end{align}
Observe that
\begin{align*}
&\left(\frac{A_\theta(\mid \phi_R u\mid ^2)}{r}\right)^2\mid \phi_R u\mid ^2 - \left(\frac{A_\theta(\mid  u\mid ^2)}{r}\right)^2\mid  u\mid ^2 \\
&= \left(\left(\frac{A_\theta(\mid \phi_R u\mid ^2)}{r}\right)^2-\left(\frac{A_\theta(\mid u\mid ^2)}{r} \right)^2\right)\mid \phi_R u\mid ^2
+ \left(\frac{A_\theta(\mid  u\mid ^2)}{r}\right)^2\left(\mid \phi_R u\mid ^2 - \mid u\mid ^2 \right) \\
&= \left(\frac{\left(A_\theta((1+\varphi_R)\mid  u\mid ^2\right) \left(A_\theta((1-\varphi_R)\mid  u\mid ^2)\right)}{r^2}\right)\mid \phi_R u\mid ^2
+ \left(\frac{A_\theta(\mid  u\mid ^2)}{r}\right)^2\left(\varphi_R-1 \right)\mid u\mid ^2.
\end{align*}
Taking into account Lemma \ref{lem.ineq} with $b=0$, $q=\infty$ and $s=2$, we find that
\begin{align*}
 \mid \int_{\mathbb{R}^2 \backslash B_R(0)} \frac{\left(A_\theta((1-\varphi_R)\mid u\mid ^2)\right)^2}{r^2}\mid \phi_R u\mid ^2+ \frac{\left(A_\theta((1+\varphi_R)\mid  u\mid ^2)\right)^2}{r^2}\mid \phi_R u\mid ^2\, dx \mid \leq \mathcal{O}\left(\frac{1}{R^2}\right)
\end{align*}
and
\begin{align*}
 \mid \int_{\mathbb{R}^2 \backslash B_R(0)} \left(\frac{A_\theta(\mid u\mid ^2)}{r}\right)^2\left(\varphi_R-1 \right)\mid u\mid ^2 \, dx  \mid \leq \mathcal{O}\left(\frac{1}{R^2}\right).
\end{align*}
As a result, we get that
\begin{align} \label{est.a}
\mid \int_{\mathbb{R}^2}\left(\frac{A_\theta(\mid \phi_R u\mid ^2)}{r}\right)^2\mid \phi_R u\mid ^2 - \left(\frac{A_\theta(\mid  u\mid ^2)}{r}\right)^2\mid  u\mid ^2 \, dx \mid  \leq \mathcal{O}\left(\frac{1}{R^2}\right).
\end{align}
Moreover, we can show that
\begin{align} \label{est.nt}
\begin{split}
\mid \int_{\mathbb{R}^2} \mid \phi_{R} u\mid ^{p+1} \, dx -\int_{\mathbb{R}^2} \mid u\mid ^{p+1} \, dx \mid  & = \int_{\mathbb{R}^2} \left(1- \mid \phi_{R}\mid ^{p+1}\right) \mid u\mid ^{p+1} \, dx \\
& \leq \int_{\mathbb{R}^2 \backslash B_R(0)}  \mid u\mid ^{p-1}\mid u\mid ^2 \, dx \\
& = \mathcal{O}\left(\frac{1}{R^{\frac{p-1}{2}}}\right).
\end{split}
\end{align}
Note that $u \in \mathcal{K}^+$, by the conservation laws, then there exists a constant $\delta>0$ such that $S(u)<(1-\delta) d$ for any $t \in [0, + \infty)$.  In view of \eqref{est.grad}, \eqref{est.a} and \eqref{est.nt}, then we know that $S(\phi_R u) <d $ for $R>0$ sufficiently large. Relying on Lemmas \ref{lem.gl} and \ref{lem.below} and using a similar way as before, we are able to prove that $K(\phi_R u) >0$ for $ R>0 $ sufficiently large. This readily indicates that $\phi_R u \in \mathcal{K}^+$ for $R>0$ sufficiently large. Arguing as the proof of Lemma \ref{lem.below}, then we have that, for $R>0$ sufficiently large,
\begin{align} \label{est.phiu}
K(\phi_R u) \gtrsim \int_{\mathbb{R}^2} \mid D_1(\phi_R u)\mid ^2 + \mid D_2 (\phi_R u)\mid ^2 \, dx \geq \frac{p-1}{p+1} \int_{\mathbb{R}^2}\mid \phi_R u\mid ^{p+1} \, dx.
\end{align}
Further we can similarly infer that
$$
\int_{\mathbb{R}^2}  \varphi_R \left(\mid D_1 u\mid ^2 + \mid D_2 u\mid ^2 -\frac{p-1}{p
+1} \mid u\mid ^{p+1}\right) \, dx + \mathcal{O}\left(\frac{1}{R^{\sigma}}\right) \geq K(\phi_R u).
$$
This together with \eqref{est.phiu} yields that
\begin{align*}
\int_{\mathbb{R}^2}  \varphi_R \left(\mid D_1 u\mid ^2 + \mid D_2 u\mid  -\frac{p-1}{p
+1} \mid u\mid ^{p+1}\right) \, dx &\geq \int_{\mathbb{R}^2}\mid \phi_R u\mid ^{p+1} \, dx  - \mathcal{O}\left(\frac{1}{R^{\sigma}}\right) \\
& \geq \int_{B_R(0)}\mid u\mid ^{p+1} \, dx  - \mathcal{O}\left(\frac{1}{R^{\sigma}}\right) .
\end{align*}
By means of \eqref{ineq.m}, we now get that
\begin{align} \label{ineq.dmb}
\frac{d}{dt}M_{\psi_R}[u] \gtrsim \int_{B_R(0)}\mid u\mid ^{p+1} \, dx  - \mathcal{O}\left(\frac{1}{R^{\sigma}}\right).
\end{align}
Notice that $\mid M_{\psi_R}(u)\mid  \lesssim R$ for any $t \in [0, +\infty)$, then it follows from \eqref{ineq.dmb} that
\begin{align} \label{ineq.br}
\int_{0}^T \int_{B_R(0)} \mid u\mid ^{p+1} \, dx dt \lesssim R + \frac{T}{R^{\sigma}}.
\end{align}
On the other hand, by the Strauss inequality \eqref{Strauss}, we know that
\begin{align}\label{ineq.br1}
\int_{0}^T \int_{\mathbb{R}^2 \backslash B_R(0)} \mid u\mid ^{p+1} \, dx dt=\int_{0}^T \int_{\mathbb{R}^2 \backslash B_R(0)} \mid u\mid ^{p-1}\mid u\mid ^2 \, dx dt \lesssim \frac{T}{R^{\frac{p-1}{2}}} \leq \frac{T}{R^{\sigma}}.
\end{align}
Consequently, by invoking \eqref{ineq.br}, \eqref{ineq.br1} and
taking $R=T^{{1}/{(1+\sigma)}}$, we derive that
$$
\int_{0}^T \int_{\mathbb{R}^2} \mid u\mid ^{p+1} \, dx \lesssim T^{\frac{1}{1+\sigma}}.
$$
Make using of \eqref{est.phiu}, we can also derive that
\begin{align} \label{ineq.at1}
\int_{0}^T \int_{B_R(0)}\left(\frac{A_{\theta}(\mid u\mid ^2)}{r}\right)^2 \mid u\mid ^2 + \mid \nabla u\mid ^2 \, dx dt  \lesssim R + \frac{T}{R^{\sigma}}.
\end{align}
By applying Lemma \ref{lem.ineq} with $b=0$, $q=\infty$ and $s=2$, we see that
\begin{align} \label{ineq.at2}
\int_{0}^T \int_{\mathbb{R}^2 \backslash B_R(0)} \left(\frac{A_{\theta}(\mid u\mid ^2)}{r}\right)^2 \mid u\mid ^2 \, dx dt \lesssim \frac{T}{R^2}.
\end{align}
Combining \eqref{ineq.at1} and \eqref{ineq.at2}, then we get that
\begin{align}\label{ineq.at}
\int_{0}^T \int_{\mathbb{R}^2} \left(\frac{A_{\theta}(\mid u\mid ^2)}{r}\right)^2 \mid u\mid ^2 \, dx dt \lesssim T^{\frac{1}{1+\sigma}}.
\end{align}
Notice that
$$
\int_{\mathbb{R}^2}A_0(\mid u\mid ^2)\mid u\mid ^2 \, dx =2\pi \int_{0}^{\infty}A_0(\mid u\mid ^2)\mid u\mid ^2 r \,dr=-2\pi \int_{0}^{\infty}\left( \int_{r}^{\infty}\frac{A_{\theta}(\mid u\mid ^2)}{\rho}\mid u\mid ^2(\rho) \, d \rho\right) \mid u\mid ^2(r) r\,dr
$$
and
$$
\mid u\mid ^2 r=\partial_r \left(\int_{0}^{r}\mid u\mid ^2 \rho \,d \rho\right)=-2 \partial_r \left(A_{\theta}(\mid u\mid ^2)\right).
$$
Therefore, we find that
$$
\int_{\mathbb{R}^2}A_0(\mid u\mid ^2)\mid u\mid ^2 \, dx=4 \pi \int_{0}^{\infty} \left(\frac{A_{\theta}(\mid u\mid ^2)}{r}\right)^2\mid u\mid ^2 r \, dr=2\int_{\mathbb{R}^2}\left(\frac{A_{\theta}(\mid u\mid ^2)}{r}\right)^2\mid u\mid ^2 \, dx.
$$
Taking advantage of \eqref{ineq.at}, then we obtain that
$$
\int_{0}^T \int_{\mathbb{R}^2} A_0(\mid u\mid ^2)\mid u\mid ^2 \, dx dt \lesssim T^{\frac{1}{1+\sigma}}.
$$
Finally, by applying \eqref{ineq.at1} and taking $R = T^{1/(1+\sigma)}$, we can derive that
$$
\int_{0}^T \int_{\mid x\mid  \leq T^{1/(1+\sigma)}} \mid \nabla u\mid ^2 \, dx dt  \lesssim T^{\frac {1}{1+\sigma}}.
$$
This completes the proof.
\end{proof}

\begin{cor} \label{cor.sm}
Let $p>3$ and let $u\in C([0, + \infty), H^1_{rad}(\mathbb{R}^2)) $ be the solution to the Cauchy problem for \eqref{CSS} with initial datum $u_0 \in \mathcal{K}^+$, then, for any $\varepsilon >0$,
there exist $\delta=\delta\left(\varepsilon \right) \in (0,\varepsilon)$, $T=T(\varepsilon)>1/\varepsilon$ and $t_{0} \in\left[{T}/{4}, {T}/{2}\right]$ such that $t_1 = t_0+ \delta T^{1-\alpha} < T/2$ and
\begin{align} \label{ineq.ct0}
\begin{split}
& \int_{t_0}^{t_1} \int_{\mathbb{R}^2} \mid u\mid ^{p+1} \, dx dt +\int_{t_0}^{t_1} \int_{\mathbb{R}^2} \left(\frac{A_{\theta}(\mid u\mid ^2)}{r}\right)^2 \mid u\mid ^2  \, dxdt  \\
& + \int_{t_0}^{t_1} \int_{\mathbb{R}^2} A_0(\mid u\mid ^2)\mid u\mid ^2 \, dx dt + \int_{t_0}^{t_1} \int_{\mid x\mid  \leq T^{\alpha}} \mid \nabla u\mid ^2 \, dx dt  \lesssim \varepsilon.
\end{split}
\end{align}
\end{cor}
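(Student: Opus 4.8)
The plan is to deduce the localized smallness \eqref{ineq.ct0} from the global space-time bound of Lemma \ref{lem.stest} by a pigeonhole (mean value) argument in the time variable. For a fixed $T>0$, introduce the nonnegative quantity
\begin{align*}
G_T(t) := \int_{\R^2} |u|^{p+1} \, dx + \int_{\R^2} \left(\frac{A_\theta(|u|^2)}{r}\right)^2 |u|^2 \, dx + \int_{\R^2} A_0(|u|^2) |u|^2 \, dx + \int_{|x| \leq T^{\alpha}} |\nabla u|^2 \, dx,
\end{align*}
so that \eqref{ineq.ct0} is precisely the assertion that $\int_{t_0}^{t_1} G_T(t) \, dt \lesssim \eps$. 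Observe that $G_T \geq 0$: the first, second and fourth terms are manifestly nonnegative, and the identity $\int_{\R^2} A_0(|u|^2)|u|^2 \, dx = 2 \int_{\R^2}(A_\theta(|u|^2)/r)^2 |u|^2 \, dx$ established inside the proof of Lemma \ref{lem.stest} shows the third is too. With this notation Lemma \ref{lem.stest} reads $\int_0^T G_T(t) \, dt \lesssim T^{\alpha}$.

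First I would fix $\eps>0$ together with a parameter $\delta \in (0,\eps)$ to be specified, and partition the interval $[T/4, 3T/8]$ into $N := \lfloor (T/8)/(\delta T^{1-\alpha}) \rfloor$ consecutive subintervals $I_k := [\,T/4 + k \delta T^{1-\alpha}, \, T/4 + (k+1)\delta T^{1-\alpha}\,]$ of common length $\delta T^{1-\alpha}$, for $k=0,\dots,N-1$. By construction $N \sim T^{\alpha}/(8\delta)$, every $I_k \subset [T/4, 3T/8] \subset [T/4, T/2]$, and the right endpoint of each $I_k$ is at most $3T/8 < T/2$. Since $G_T \geq 0$, restricting the integral in Lemma \ref{lem.stest} to $[T/4, 3T/8]$ yields $\sum_{k=0}^{N-1} \int_{I_k} G_T(t) \, dt \leq \int_0^T G_T(t) \, dt \lesssim T^{\alpha}$.

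The averaging step then produces an index $k$ for which the integral of $G_T$ over $I_k$ is no larger than the mean, i.e.
\begin{align*}
\min_{0 \leq k < N} \int_{I_k} G_T(t) \, dt \leq \frac{1}{N} \sum_{k=0}^{N-1}\int_{I_k} G_T(t)\,dt \lesssim \frac{T^{\alpha}}{N} \sim \delta.
\end{align*}
Setting $[t_0, t_1] := I_k$ for this optimal $k$, we have $t_0 \in [T/4, T/2]$, $t_1 = t_0 + \delta T^{1-\alpha} < T/2$, and $\int_{t_0}^{t_1} G_T(t)\,dt \lesssim \delta$. It remains to make the final choices: pick $\delta = \delta(\eps) \in (0,\eps)$ small enough that the implicit constant times $\delta$ is at most $\eps$, and then choose $T = T(\eps) > 1/\eps$ large enough that $N \geq 1$, which amounts to $T^{\alpha} \geq 8\delta$ and is achievable since $\delta$ is already fixed. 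This produces exactly the data $\delta, T, t_0, t_1$ claimed in the corollary and establishes \eqref{ineq.ct0}.

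The argument is a clean pigeonhole, so there is no deep analytic obstacle; the only point requiring genuine care is the bookkeeping of exponents that makes the gain $T^{\alpha}/N \sim \delta$ independent of $T$. This is precisely why the subintervals are taken of length $\delta T^{1-\alpha}$ rather than a fixed length: their number scales like $T^{\alpha}$ and thereby cancels the factor $T^{\alpha}$ coming from Lemma \ref{lem.stest}. A secondary point is that the spatial cutoff radius $T^{\alpha}$ in the gradient term is tied to the same $T$ as the time horizon, so it must be frozen before the partition is carried out; this is fully consistent with the form of \eqref{ineq.ct0}, where the cutoff sits at $|x| \leq T^{\alpha}$ with $T$ the very quantity produced by the construction.
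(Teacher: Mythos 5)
Your proposal is correct and follows essentially the same route as the paper: Lemma \ref{lem.stest} gives the bound $\lesssim T^{\alpha}$ on $[0,T]$, and a pigeonhole/averaging argument over $\sim \delta^{-1}T^{\alpha}$ disjoint subintervals of length $\delta T^{1-\alpha}$ inside $[T/4,T/2]$ produces the desired interval $[t_0,t_1]$. Your write-up is in fact slightly more careful than the paper's (checking nonnegativity of the integrand via the identity $\int_{\mathbb{R}^2} A_0(\mid u\mid^2)\mid u\mid^2\,dx = 2\int_{\mathbb{R}^2}\bigl(A_\theta(\mid u\mid^2)/r\bigr)^2\mid u\mid^2\,dx$, and fixing $\delta$ before $T$ so that $N\geq 1$), but the underlying idea is identical.
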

\begin{proof}
 From Lemma \ref{lem.stest}, we know that, for any $T>0$ large enough,
\begin{align}  \label{ineq.ct1}
& \int_{0}^T \int_{\mathbb{R}^2} \mid u\mid ^{p+1} \, dx dt +\int_{0}^T \int_{\mathbb{R}^2} \left(\frac{A_{\theta}(\mid u\mid ^2)}{r}\right)^2 \mid u\mid ^2  \, dxdt  \\ \nonumber & + \int_{0}^T \int_{\mathbb{R}^2} A_0(\mid u\mid ^2)\mid u\mid ^2 \, dx dt + \int_{0}^T \int_{\mid x\mid  \leq T^{\alpha}} \mid \nabla u\mid ^2 \, dx dt  \lesssim T^{\alpha}.
\end{align}
For any $\delta \in (0,\varepsilon)$, the interval $\left[{T}/{4}, {T}/{2}\right]$ can be covered by $\sim \delta^{-1} T^{\alpha}$ disjoint intervals of length $\delta T^{1-\alpha},$ then \eqref{ineq.ct1} indicates that there exists some interval $[t_{0}, t_1]\subset \left[{T}/{4}, {T}/{2}\right]$, i.e.
$$
\frac{T}{4} < t_0 < t_1 =  t_{0}+\delta T^{1-\alpha} < \frac{T}{2},
$$
obeying \eqref{ineq.ct0}. Thus the proof is completed.
\end{proof}

\begin{rem} \label{rem.des}
It is important to mention that $T=T(\varepsilon)> 1/\varepsilon$ can be chosen sufficiently large after the choice of $\delta=\delta(\varepsilon)\in (0,\varepsilon)$ such that, for any integer $N \geq 1$, there holds that
\begin{align*}
\max(\varepsilon, \delta)= \varepsilon \lesssim \frac{1}{\ln^N T}.
\end{align*}
Indeed, we can take $1/\varepsilon <T \lesssim e^{1/(\varepsilon^{1/N})}$ for $\varepsilon>0$ small enough.
\end{rem}

We are now ready to prove the scattering of solutions.

\begin{proof}[Proof of Theorem \ref{thm.scattering1} $\textnormal{i)}$]
Let $\varepsilon>0$ be a small parameter (depending on $\| u_{0}\| _{H^{1}(\mathbb{R}^2)}$) to be chosen sufficiently small below.
To achieve scattering, we only need to prove that there exists a constant $t_1= t_1(\varepsilon)>0$ sufficiently large such that
for any $s \in (0,1)$ close to $1$ and for any couple $(1/q,1/r)$ in the triangle
$$
\mathfrak{T}_s:= \left\{\left(\frac{1}{q}, \frac{1}{r}\right) : \frac{1-s}{2}\leq \frac{1}{q}+\frac{1}{r} \leq \frac{1}{2}, \ 2 < q,r < \infty \right\},
$$
there holds that
\begin{align}\label{eq.key1m}
\left\| u\right\| _{L^{q}\left([t_1,\infty); L^r(\mathbb{R}^{2})\right)}    \lesssim o(1),
\end{align}
where $o(1) \to 0$ as $\varepsilon \to 0$.

Let us now explain the plan of the proof of \eqref{eq.key1m}. By using Sobolev embedding inequality and Strichartz estimate, we first have that
\[
\left\| e^{\textnormal{i} t \Delta} u_{0}\right\| _{L^{q}\left([0,\infty); L^r(\mathbb{R}^{2})\right)}
\lesssim\left\| u_{0}\right\| _{H^{1}\left(\mathbb{R}^{2}\right)}.
\]
It then follows that there exists a constant $t_1=t_1(\varepsilon)>0$ sufficiently large such that
\begin{align}\label{eq.sc1}
 \left\| e^{\textnormal{i} t \Delta} u_{0}\right\| _{L^{q}\left([t_1,\infty); L^r(\mathbb{R}^{2})\right)} \lesssim o(1).
\end{align}
At this point, to deduce \eqref{eq.key1m}, it suffices to show that
\begin{align}
\begin{aligned}
\left\| \int_{0}^{t_{1}} e^{\textnormal{i} (t-\tau) \Delta} \Lambda (u)(\tau) \, d \tau\right\| _{L^{q}\left([t_1,\infty); L^r(\mathbb{R}^{2})\right)}   \lesssim  o(1).
\end{aligned}
\label{eq.4.4}
\end{align}
Let us now assume that \eqref{eq.4.4} is valid for the moment and use the Duhamel formula to write
\begin{align*}
e^{\textnormal{i} \left(t-t_{1}\right) \Delta} u\left(t_{1}\right)=e^{\textnormal{i} t \Delta} u_{0}+\textnormal{i} \int_{0}^{t_{1}} e^{\textnormal{i}(t-\tau) \Delta} \Lambda (u)(\tau) \,d \tau.
\end{align*}
Choosing $t_1>0$ sufficiently large depending on $\varepsilon$ and utilizing \eqref{eq.sc1} and \eqref{eq.4.4}, then we deduce that
\begin{align}\label{e.sid1}
\left\| e^{\textnormal{i} \left(t-t_{1}\right) \Delta} u\left(t_{1}\right)\right\| _{L^{q}\left([t_1,\infty); L^r(\mathbb{R}^{2})\right)}
\lesssim o(1).
\end{align}
Observe now that $u$ satisfies the following integral equation,
$$
u(t) = e^{\mathrm{i} (t-t_1) \Delta} u(t_1) +\mathrm{i} \int_{t_1}^t e^{\mathrm{i} (t-\tau)\Delta} \Lambda(u(\tau)) \,ds.
$$
From Strichartz estimates, uniform boundedness of $H^1$-norm of the solution and \eqref{e.sid1},
then we are able to obtain the estimate \eqref{eq.key1m}, see the Appendix for the proof.

In what follows, we are going to prove that \eqref{eq.4.4} holds true.
In view of Corollary \ref{cor.sm}, we first know that
\begin{align} \label{est.small}
\int_{t_{0}}^{t_1} \int_{\mathbb{R}^2} \mid u(t, x)\mid ^{p+1} \,d x d t \lesssim \varepsilon,
\end{align}
where $t_{1}=t_{0}+\delta T^{1-\alpha}$ and $t_0\in[T/4, T/2]$. Notice that $t_{0} \leq \frac{T}{2}$ and $\alpha>0$, then $t_{1}<T$ for $T>0$ sufficiently large. To establish \eqref{eq.4.4}, we shall consider separately the integral on $\left[0, t_{0}\right]$ and $\left[t_{0}, t_{1}\right]$. Let us first treat the integral on $\left[0, t_{0}\right]$. For any $t \geq t_1$, by the definition of $\Lambda_1$ and Lemma \ref{lem.bf1}, we get that
$$
\mid \Lambda_1(t,r)\mid   \lesssim \frac{\mid u(t,r)\mid }{1+r^2}.
$$
Applying standard dispersive estimate, then we have that
\begin{align} \label{est.lambda1}
\begin{split}
\left\| \int_{0}^{t_{0}} e^{\textnormal{i}(t-\tau)\Delta} \Lambda_1(u)(\tau) \,d \tau\right\| _{L^{\infty}(\mathbb{R}^2)} & \lesssim \int_{0}^{t_{0}} \mid t-\tau\mid ^{-1} \|  \Lambda_1(u)(\tau)\| _{L^1(\mathbb{R}^2)} \,d \tau \\
& \lesssim \int_{0}^{t_{0}} \mid t-\tau\mid ^{-1} \int_0^\infty \frac{r}{1+r^2}  \mid u(\tau,r)\mid  \, dr d \tau.
\end{split}
\end{align}
Note that
$$
\int_0^\infty \frac{r}{1+r^2}  \mid u(\tau,r)\mid  \, dr \lesssim \| u(\tau)\| _{L^{p+1}(\mathbb{R}^2)}\left\|  (1+\mid x\mid ^2)^{-1}\right\| _{L^{(p+1)/p}(\mathbb{R}^2)} \lesssim \| u(\tau)\| _{L^{p+1}(\mathbb{R}^2)},
$$
where we used H\"older's inequality. Therefore, from \eqref{est.lambda1}, H\"older's inequality and Lemma \ref{lem.stest}, we deduce that
\begin{align}\label{est.1}
\begin{split}
\left\| \int_{0}^{t_{0}} e^{\textnormal{i}(t-\tau)\Delta} \Lambda_1(u)(\tau) \,d \tau\right\| _{L^{\infty}(\mathbb{R}^2)}
& \lesssim \left(\int_0^{t_0}\mid t-\tau\mid ^{-\frac{p+1}{p}} \, ds\right)^{\frac{p}{p+1}}\left( \int_{0}^{t_{0}} \| u(\tau)\| ^{p+1}_{L^{p+1}(\mathbb{R}^2)} \, d \tau\right)^{\frac{1}{p+1}}  \\
&\lesssim \mid t-t_0\mid ^{-\frac{1}{p+1}} \left( \int_{0}^{t_{0}}  \| u(\tau)\| ^{p+1}_{L^{p+1}(\mathbb{R}^2)} \, d \tau\right)^{\frac{1}{p+1}} \\
& \lesssim \delta^{-\frac{1}{p+1}} T^{-\frac{1-2\alpha}{p+1}}.
\end{split}
\end{align}
We next turn to treat the term with $\Lambda_2$. By using the definition of $\Lambda_2$ and Lemma \ref{lem.ineq}, we see that
$$
\mid \Lambda_2(t,r)\mid  \lesssim \frac{\mid u(t,r)\mid }{1+r^2}.
$$
Reasoning as the proof of \eqref{est.1}, we can similarly derive that
\begin{align}\label{est.2}
& \left\| \int_{0}^{t_{0}} e^{\textnormal{i}(t-\tau)\Delta} \Lambda_2(u)(\tau) \, d \tau\right\| _{L^{\infty}(\mathbb{R}^2)} \lesssim \delta^{-\frac{1}{p+1}} T^{-\frac{1-2\alpha}{p+1}}.
\end{align}
We now estimate the term with $\Lambda_3$. Utilizing standard dispersive estimate, H\"older's inequality and Lemma \ref{lem.stest}, we obtain that
\begin{align*}
\left\| \int_{0}^{t_{0}} e^{\textnormal{i} (t-\tau) \Delta} \Lambda_3(u)(\tau) \, d \tau \right\| _{L^{\infty}(\mathbb{R}^2)} =&\left\| \int_{0}^{t_{0}} e^{\textnormal{i} (t-\tau) \Delta}\mid u\mid ^{p-1} u \, d \tau\right\| _{L^{\infty}(\mathbb{R}^2)} \\
& \lesssim \int_{0}^{t_{0}}\mid t-\tau\mid ^{-1}\| u(\tau)\| _{L^{p+1}(\mathbb{R}^2)}^{\frac{(p+1)(p-2)}{p-1}}\| u(\tau)\| _{L^{2}(\mathbb{R}^2)}^{\frac{2}{p-1}} \, d \tau \\
& \lesssim \left(\int_0^{t_0} \mid t-\tau\mid ^{-(p-1)} \, d\tau \right)^{\frac{1}{p-1}} \left(\int_{0}^{t_{0}} \int_{\mathbb{R}^2}\mid u(\tau, x)\mid ^{p+1} \,d x d \tau\right)^{\frac{p-2}{p-1}}\\
& \lesssim \mid t-t_{0}\mid ^{-\frac{p-2}{p-1}} T^{\frac{\alpha(p-2)}{p-1}} \\
& \lesssim \delta^{-\frac{p-2}{p-1}}T^{-(1-2 \alpha) \frac{p-2}{p-1}}.
\end{align*}
This estimate together with \eqref{est.1}, \eqref{est.2} readily indicates that
\begin{align} \label{est.inter1}
\left\| \int_{0}^{t_{0}} e^{\textnormal{i}(t-\tau) \Delta} \Lambda(u)(\tau) \, d \tau\right\| _{L_{t, x}^{\infty}\left(\left[t_{1}, \infty\right) \times \mathbb{R}^{2}\right)} \lesssim \delta^{-\frac{1}{p+1}} T^{-\frac{1-2\alpha}{p+1}} + \delta^{-\frac{p-2}{p-1}}T^{-(1-2 \alpha) \frac{p-2}{p-1}}.
\end{align}
In addition, observe that
\[
\textnormal{i} \int_{0}^{t_{0}} e^{\textnormal{i}(t-\tau) \Delta} \Lambda(u)(\tau)\, d \tau=e^{\textnormal{i}\left(t-t_{0}\right) \Delta} u(t_{0})-e^{\textnormal{i} t \Delta} u_{0}.
\]
As a result of Strichartz estimates, we find that
\begin{align} \label{est.inter2}
\left\| \int_{0}^{t_{0}} e^{\textnormal{i}(t-\tau) \Delta}\Lambda(u)(\tau) \, d \tau\right\| _{L^q([0,\infty);L^r(\mathbb{R}^2))}
\lesssim  1.
\end{align}
Since $p>3$, by using \eqref{est.inter1}, \eqref{est.inter2} and interpolation inequality, then we get that there exists a constant $\beta>0$ such that for any $(1/q, 1/r) \in \mathfrak{T}_s$,
\begin{align} \label{est.s1}
\begin{aligned}
\left\| \int_{0}^{t_{0}} e^{\textnormal{i}(t-\tau) \Delta} \Lambda(u) (\tau) \,d \tau\right\| _{L^q([t_1,\infty);L^r(\mathbb{R}^2))}
\lesssim T^{-\beta}.
\end{aligned}
\end{align}

We next deal with the integral on $\left[t_{0}, t_{1}\right]$.
In this situation, we start with estimating the term with $\Lambda_3$.
In light of Sobolev embedding inequality with $0<s<1$ and Strichartz estimates for the couple $(1/q, 1/r) \in \mathfrak{T}_s$, we first derive that
\begin{align}\label{est.l333aa}
\begin{split}
&\left\| \int_{t_{0}}^{t_{1}} e^{\textnormal{i}(t-\tau) \Delta}\mid u\mid ^{p-1} u(\tau) \,d \tau\right\| _{L^q([t_1,\infty);L^r(\mathbb{R}^2))}   \lesssim\left\| \mid u\mid ^{p-1} u\right\| _{L_{(t_0,t_1)}^{q_1} W^{s,r_1}(\mathbb{R}^2)}
\end{split}
\end{align}
where
$$
\frac{1}{q_1} = \frac{1}{2}+\kappa,  \quad \frac{1}{r_1} = 1-\kappa
$$
and $\kappa >0$ is a small constant.
For any $0<\alpha <p-1$, by Lemma \ref{l.Nem2}, we find that
$$
\| \mid u\mid ^{p-1}u \| _{W^{s,r_1}(\mathbb{R}^2)} \lesssim \| u\| _{ W^{\sigma,\rho_1}(\mathbb{R}^2)}\| u\| ^{p-1}_{ L^{\rho_2(p-1)}(\mathbb{R}^2)},
$$
where $ s < \sigma < 1,$ $\rho_1 \geq 2$ and
$$
\frac{1}{r_1}=\frac{1}{\rho_1}+\frac{1}{\rho_2}.
$$
Using H\"older's inequality, then we obtain that
$$
\| \mid u\mid ^{p-1}u \| _{W^{s,r_1}(\mathbb{R}^2)}  \lesssim \| u\| _{ W^{\sigma,\rho_1}(\mathbb{R}^2)}\| u\| ^\alpha_{ L^{\alpha r_2}(\mathbb{R}^2)}\| u\| ^{p-1-\alpha}_{ L^{(p-1-\alpha) r_3}(\mathbb{R}^2)},
$$
where
$$
\frac{1}{r_2}+\frac{1}{r_3} =\frac{1}{\rho_2}.
$$
As a consequence, we have that
\begin{align} \label{l3}
\left\| \mid u\mid ^{p-1} u\right\| _{L_{(t_0,t_1)}^{q_1} W^{s,r_1}(\mathbb{R}^2)} \lesssim
\left\| u\right\| ^\alpha_{L_{(t_0,t_1)}^{\alpha q_1} L^{\alpha r_2}(\mathbb{R}^2)}
\left\| u\right\| ^{p-1-\alpha}_{L_{(t_0,t_1)}^{\infty} L^{\alpha r_3}(\mathbb{R}^2)} \left\| u\right\| _{L_{(t_0,t_1)}^{\infty} W^{\sigma,\rho_1}(\mathbb{R}^2)}.
\end{align}
Now we choose
$$
\alpha = (p+1) \left( \frac{1}{2} + \kappa\right), \quad \frac{1}{r_2} = \frac{1}{2} + \kappa, \quad \frac{1}{r_3} = \kappa, \quad \frac{1}{\rho_1}=\frac 12 -3\kappa.
$$
Then, by Sobolev embedding inequality, we conclude that
$$
\left\| u\right\| _{L_{(t_0,t_1)}^{\infty} L^{\alpha r_3}(\mathbb{R}^2)} \lesssim  \left\| u\right\| _{L_{(t_0,t_1)}^{\infty} H^1(\mathbb{R}^2)}, \quad  \left\| u\right\| _{L_{(t_0,t_1)}^{\infty} W^{\sigma,\rho_1}(\mathbb{R}^2)}\lesssim \left\| u\right\| _{L_{(t_0,t_1)}^{\infty} H^1(\mathbb{R}^2)}.
$$
Making use of \eqref{l3}, then we arrive at
$$
\left\| \mid u\mid ^{p-1} u\right\| _{L_{(t_0,t_1)}^{q_1} W^{s,r_1}(\mathbb{R}^2)} \lesssim
\left\| u\right\| ^\alpha_{L_{(t_0,t_1)}^{p+1} L^{p+1}(\mathbb{R}^2)} \left\| u\right\| ^{p-\alpha}_{L_{(t_0,t_1)}^{\infty} H^1(\mathbb{R}^2)}.
$$
From \eqref{est.small} and \eqref{est.l333aa}, then we deduce that
\begin{align}\label{est.l333}
\begin{split}
\hspace{-1cm}\left\| \int_{t_{0}}^{t_{1}} e^{\textnormal{i}(t-\tau) \Delta}\mid u\mid ^{p-1} u(\tau) \,d \tau\right\| _{L^2([t_1,\infty)); L^\infty(\mathbb{R}^2)}
\lesssim\| u\| ^{\alpha}_{L_{t, x}^{p+1}\left(\left[t_{0}, t_{1}\right] \times \mathbb{R}^{2}\right)}\left\|  u\right\| ^{p-\alpha}_{L_{t}^{\infty} H^1(\mathbb{R}^2)} =o(1).
\end{split}
\end{align}

We now turn to estimate the term with $\Lambda_2$.
From Sobolev embedding, Strichartz estimates and the definition of $\Lambda_2$, we deduce that
\begin{align} \label{2te1}
\begin{split}
&\left\| \int_{t_{0}}^{t_{1}} e^{\textnormal{i}(t-\tau) \Delta}\Lambda_2(u) \, d \tau\right\| _{L^{q}([t_1,\infty); L^r( \mathbb{R}^2))}\\
& \left\| \nabla \int_{t_{0}}^{t_{1}} e^{\textnormal{i}(t-s) \Delta} \Lambda_2(u)(s) \,d s\right\| _{L_{t}^{2(p-1)} L_{x}^{\frac{2(p-1)}{p-2}}}  +\left\|  \int_{t_{0}}^{t_{1}} e^{\textnormal{i}(t-s) \Delta} \Lambda_2(u)(s) \,d s\right\| _{L_{t}^{2(p-1)} L_{x}^{\frac{2(p-1)}{p-2}}}\\
& \lesssim \left\|  \int_{t_{0}}^{t_{1}} e^{\textnormal{i}(t-s) \Delta} \left(\nabla\left(\frac{A_\theta(\mid u\mid ^2)}{r} \right) \right)\left(\frac{A_\theta(\mid u\mid ^2)}{r}\right) u \,d s\right\| _{L_{t}^{2(p-1)} L_{x}^{\frac{2(p-1)}{p-2}}} \\
& \quad +\left\|  \int_{t_{0}}^{t_{1}} e^{\textnormal{i}(t-s) \Delta} \left(\frac{A_\theta(\mid u\mid ^2)}{r}\right)^2 \nabla u \,d s\right\| _{L_{t}^{2(p-1)} L_{x}^{\frac{2(p-1)}{p-2}}} \\
& \quad  + \left\|  \int_{t_{0}}^{t_{1}} e^{\textnormal{i}(t-s) \Delta}  \left(\frac{A_\theta(\mid u\mid ^2)}{r}\right)^2 u \,d s\right\| _{L_{t}^{2(p-1)} L_{x}^{\frac{2(p-1)}{p-2}}} \\
& \lesssim \left\| \nabla \left(\frac{A_\theta(\mid u\mid ^2)}{r} \right)\left( \frac{A_\theta(\mid u\mid ^2)}{r}\right) u \right\| _{L_{(t_0, t_1)}^{2 } L^1_x} + \left\|  \left(\frac{A_\theta(\mid u\mid ^2)}{r}\right)^2 \nabla u \right\| _{L_{(t_0, t_1)}^{2} L^1_x} \\
& \quad + \left\|   \left(\frac{A_\theta(\mid u\mid ^2)}{r}\right)^2 u \right\| _{L_{(t_0, t_1)}^{2} L^1_x}.
\end{split}
\end{align}
We are now going to estimate every term in the right side hand of \eqref{2te1}. Let us begin with handling the first term. Since
$$
\nabla\left(\frac{A_\theta(\mid u\mid ^2)}{r} \right)= - \frac{A_\theta(\mid u\mid ^2)}{r^2}- \frac{\mid u\mid ^2}{2},
$$
by applying Lemma \ref{lem.ineq} and the uniform boundedness of $H^1$-norm of the solution, then we deduce that
$$
\left\| \nabla \left(\frac{A_\theta(\mid u\mid ^2)}{r} \right)\right\| _{L_{(t_0,t_1)}^{\infty } L_{x}^{2}} \lesssim 1.
$$
Using Corollary \ref{cor.sm}, then we have that
\begin{align}\label{eq.sm2}
\begin{split}
\hspace{-2cm}\left\| \nabla\left(\frac{A_\theta(\mid u\mid ^2)}{r}\right)\left( \frac{A_\theta(\mid u\mid ^2)}{r}\right) u \right\| _{L_{(t_0,t_1)}^{2 } L_{x}^{1}}  &\lesssim \left\|  \nabla \left(\frac{A_\theta(\mid u\mid ^2)}{r} \right)\right\| _{L_{(t_0,t_1)}^{\infty } L_{x}^{2}}
\left\|   \left( \frac{A_\theta(\mid u\mid ^2)}{r}\right) u \right\| _{L_{(t_0,t_1)}^{2 } L_{x}^{2}} \\
& \lesssim \varepsilon.
\end{split}
\end{align}
Next we treat the second term in the right side hand of \eqref{2te1}. To do this,
we split the integration domain $\mathbb{R}^2 $ into two subdomains $\{\mid x\mid  \leq T^\alpha\}$ and $\{\mid x\mid  > T^\alpha\}$. It yields from H\"older's inequality that
$$
\left\| \left(\frac{A_\theta(\mid u\mid ^2)}{r}\right)^2 \nabla u \right\| _{L_{(t_0,t_1)}^{2} L_{\{\mid x\mid  \leq T^\alpha\}}^{1}} \leq
\left\| \left(\frac{A_\theta(\mid u\mid ^2)}{r}\right)^2  \right\| _{L_{(t_0,t_1)}^{\infty} L_{x}^{2}}
\left\| \nabla u \right\| _{L_{(t_0,t_1)}^{2} L_{\{\mid x\mid  \leq T^\alpha\}}^{2}}.
$$
In view of Corollary \ref{cor.sm}, we know that
$$
\left\| \nabla u \right\| _{L_{(t_0,t_1)}^{2} L_{\{\mid x\mid  \leq T^\alpha\}}^{2}} \lesssim \varepsilon.
$$
On the other hand, by using Lemma \ref{lem.ineq} and the uniform boundedness of $H^1$-norm of the solution, we obtain that
$$
\left\| \left(\frac{A_\theta(\mid u\mid ^2)}{r}\right)^2  \right\| _{L_{(t_0,t_1)}^{\infty} L_{x}^{2}}
\lesssim 1.
$$
Hence we deduce that
\begin{align}\label{eq.sm4}
\left\| \left(\frac{A_\theta(\mid u\mid ^2)}{r}\right)^2 \nabla u \right\| _{L_{(t_0,t_1)}^{2} L_{\{\mid x\mid \leq T^\alpha\}}^{1}} \lesssim \varepsilon.
\end{align}
We now estimate the integral on the exterior domain $\{\mid x\mid  > T^\alpha\}$.  Note first that $\left\| {A_\theta(\mid u\mid ^2)}\right\| _{L^{\infty}(\mathbb{R}^2)}\lesssim 1$, see Lemma \ref{lem.ineq}. Since $1/3 < \sigma<1/2$, then it is not hard to verify that
\begin{align*}
\left\| \left(\frac{A_\theta(\mid u\mid ^2)}{r}\right)^2 \nabla u \right\| _{L_{(t_0,t_1)}^{2} L_{\{\mid x\mid > T^\alpha\}}^{1}}
&\leq\left\| \left(\frac{A_\theta(\mid u\mid ^2)}{r}\right)^2  \right\| _{L_{(t_0,t_1)}^{2} L_{\{\mid x\mid > T^\alpha\}}^{2}}
\left\| \nabla u \right\| _{L_{(t_0,t_1)}^{\infty} L_{x}^{2}} \\
&\lesssim \left\| \frac{1}{r^2}\right\| _{L_{(t_0,t_1)}^{2} L_{\{\mid x\mid > T^\alpha\}}^{2}}
\lesssim \delta^{1/2} T^{(1-3\alpha)/2}=o(1).
\end{align*}
This along with \eqref{eq.sm4} yields that
\begin{align} \label{est.l212}
\left\| \left(\frac{A_\theta(\mid u\mid ^2)}{r}\right)^2 \nabla u \right\| _{L_{(t_0,t_1)}^{2} L_{x}^{1}}=o(1).
\end{align}
We finally estimate the third term in the right side hand of \eqref{2te1}.
In light of Corollary \ref{cor.sm}, we get that
\begin{align*}
\left\| \left(\frac{A_\theta(\mid u\mid ^2)}{r}\right)^2 u \right\| _{L_{(t_0,t_1)}^{2} L^{1}_{\{\mid x\mid \leq T^\alpha\}}}
&\lesssim \left\| \left(\frac{A_\theta(\mid u\mid ^2)}{r}\right) u \right\| _{L_{(t_0,t_1)}^{2} L_{x}^{2}}\left\|    \frac{A_\theta(\mid u\mid ^2)}{r} \right\| _{L_{(t_0,t_1)}^{\infty} L^{2}_{\{\mid x\mid \leq T^\alpha\}}} \\
& \lesssim \varepsilon \left\|    \frac{1}{r} \right\| _{L_{(t_0,t_1)}^{\infty} L^{2}_{\{\mid x\mid  \leq T^\alpha\}}} = \varepsilon  \ln^{1/2} T.
\end{align*}
Using Remark \ref{rem.des}, then we obtain that
\begin{align}\label{eq.sm0}
 \left\| \left(\frac{A_\theta(\mid u\mid ^2)}{r}\right)^2 u \right\| _{L_{(t_0,t_1)}^{2} L^{1}_{\{ \mid x\mid  \leq T^\alpha\}}} = o(1).
\end{align}
Furthermore, we can similarly deduce that
\begin{align*}
\left\| \left(\frac{A_\theta(\mid u\mid ^2)}{r}\right)^2  u \right\| _{L_{(t_0,t_1)}^{2} L_{\{\mid x\mid  > T^\alpha\}}^{1}}
&\leq \left\| \left(\frac{A_\theta(\mid u\mid ^2)}{r}\right)^2  \right\| _{L_{(t_0,t_1)}^{2} L_{\{\mid x\mid  > T^\alpha\}}^{2}}
\left\|  u \right\| _{L_{(t_0,t_1)}^{\infty} L_{x}^{2}} \\
&\lesssim \left\| \frac{1}{r^2}\right\| _{L_{(t_0,t_1)}^{2} L_{\{\mid x\mid > T^\alpha\}}^{2}}
\lesssim \delta^{1/2} T ^{(1-3\alpha)/2}=o(1).
\end{align*}
This together with \eqref{eq.sm0} leads to
\begin{align} \label{est.l23}
\left\| \left(\frac{A_\theta(\mid u\mid ^2)}{r}\right)^2 u \right\| _{L_{(t_0,t_1)}^{2} L^{1}_x}=o(1).
\end{align}
Combining \eqref{2te1}, \eqref{eq.sm2}, \eqref{est.l212} and \eqref{est.l23}, then we deduce
\begin{align} \label{est.l222}
\left\| \int_{t_0}^{t_{1}} e^{\textnormal{i}(t-\tau) \Delta} \Lambda_2(u) (\tau) \,d \tau\right\| _{L^q([t_1,\infty);L^r(\mathbb{R}^2))}
\lesssim o(1).
\end{align}

Let us now estimate the term with $\Lambda_1$. From Sobolev embedding, Strichartz estimates and the definition of $\Lambda_1$, we deduce that
 \begin{align} \label{est.l11} \nonumber
&\left\| \int_{t_0}^{t_{1}} e^{\textnormal{i}(t-\tau) \Delta} \Lambda_1(u) (\tau) \,d \tau\right\| _{L^q([t_1,\infty); L^r(\mathbb{R}^2))} \\ \nonumber
&\lesssim  \left\| \nabla \int_{t_{0}}^{t_{1}} e^{\textnormal{i}(t-\tau) \Delta} \Lambda_1(u)(\tau) \,d \tau\right\| _{L_{t}^{2(p-1)} L_{x}^{\frac{2(p-1)}{p-2}}} +\left\| \int_{t_{0}}^{t_{1}} e^{\textnormal{i}(t-\tau) \Delta} \Lambda_1(u)(\tau) \,d \tau\right\| _{L_{t}^{2(p-1)} L_{x}^{\frac{2(p-1)}{p-2}}} \\\nonumber
&\lesssim  \left\| \int_{t_{0}}^{t_{1}} e^{\textnormal{i}(t-\tau) \Delta} \left(\nabla A_0(\mid u\mid ^2) \right) u \,d \tau\right\| _{L_{t}^{2(p-1)} L_{x}^{\frac{2(p-1)}{p-2}}} +\left\|  \int_{t_{0}}^{t_{1}} e^{\textnormal{i}(t-\tau) \Delta} A_0(\mid u\mid ^2) \nabla u \,d \tau \right\| _{L_{t}^{2(p-1)} L_{x}^{\frac{2(p-1)}{p-2}}} \\\nonumber
&\quad +\left\|  \int_{t_{0}}^{t_{1}} e^{\textnormal{i}(t-\tau) \Delta} A_0(\mid u\mid ^2) u \,d \tau \right\| _{L_{t}^{2(p-1)} L_{x}^{\frac{2(p-1)}{p-2}}} \\
&\lesssim \left\|   \left(\nabla A_0(\mid u\mid ^2) \right) u \right\| _{L_{(t_0,t_1)}^{2 } L_{x}^{1}} + \left\|    A_0(\mid u\mid ^2) \nabla u \right\| _{L_{(t_0,t_1)}^{2} L_{x}^{1}}  + \left\|    A_0(\mid u\mid ^2) u \right\| _{L_{(t_0,t_1)}^{2} L_{x}^{1}}.
\end{align}
We are going to estimate every term in the right side hand of  \eqref{est.l11}. Let us start with handling the first term in the right side hand of \eqref{est.l11}.
Notice that
$$
\nabla A_0(\mid u\mid ^2)= \frac{A_\theta(\mid u\mid ^2)}{r} \mid u\mid ^2,
$$
then
$$
\left\|  \left(\nabla A_0(\mid u\mid ^2) \right) u \right\| _{L_{(t_0,t_1)}^{2 } L_{x}^{1}} = \left\|  \frac{A_\theta(\mid u\mid ^2)}{r} \mid u\mid ^3\right\| _{L_{(t_0,t_1)}^{2 } L_{x}^{1}} \leq \left\|  \frac{A_\theta(\mid u\mid ^2)}{r} u\right\| _{L_{(t_0,t_1)}^{2 } L_{x}^{2}} \left\|  u\right\| ^2_{L_{(t_0,t_1)}^{\infty } L_{x}^{4}}.
$$
From Corollary \ref{cor.sm} and the uniform boundedness of $H^1$-norm of the solution, then we conclude that
\begin{align}\label{eq.a0f1}
\left\| \left(\nabla A_0(\mid u\mid ^2) \right) u \right\| _{L_{(t_0,t_1)}^{2 } L_{x}^{1}} = o(1).
\end{align}
Next we deal with the second term in the right side hand of \eqref{est.l11}.
We split again the integration domain on $\mathbb{R}^2$ into two subdomains $\{\mid x\mid  \leq T^\alpha\}$ and $\{\mid x\mid >T^\alpha\}$.
Observe that
$$
\left\|    A_0(\mid u\mid ^2) \nabla u \right\| _{L_{(t_0,t_1)}^{2} L_{\{\mid x\mid  \leq T^\alpha\}}^{1}} \leq  \left\|   A_0(\mid u\mid ^2) \right\| _{L_{(t_0,t_1)}^{\infty} L_{\{\mid x\mid  \leq T^\alpha\}}^{2}}  \left\|   \nabla u \right\| _{L_{(t_0,t_1)}^{2} L_{x}^{2}}.
$$
According to Corollary \ref{cor.sm}, we know that
$$
\left\|    \nabla  u \right\| _{L_{(t_0,t_1)}^{2} L_{\{\mid x\mid  \leq T^\alpha\}}^{2}} \leq \varepsilon.
$$
Moreover, by using Lemma \ref{lem.bf1}, we have that
$$
\left\| A_0(\mid u\mid ^2) \right\| _{L_{(t_0,t_1)}^{\infty} L_{ \{\mid x\mid  \leq T^\alpha\}}^{2}} \lesssim 1.
$$
Therefore, we find that
$$
\left\|  A_0(\mid u\mid ^2) \nabla u \right\| _{L_{(t_0,t_1)}^{2} L_{\{ \mid x\mid  \leq T^\alpha\}}^{1}} \leq \varepsilon.
$$
On the other hand, taking into account Lemma \ref{lem.bf1} and the fact $1/3 < \sigma<1/2$, we are able to derive that
$$
\left\|  A_0(\mid u\mid ^2) \nabla u \right\| _{L_{(t_0,t_1)}^{2} L_{\{\mid x\mid >T^\alpha\}}^{1}} \lesssim \left\|  \frac{1}{r^2} \right\| _{L_{(t_0,t_1)}^{2} L_{\{\mid x\mid >T^\alpha\}}^{2}} \left\| \nabla u \right\| _{L_{(t_0,t_1)}^{\infty} L_{x}^{2}}=o(1).
$$
As a consequence, we get that
\begin{align}\label{eq.a0s2}
\left\|    A_0(\mid u\mid ^2) \nabla  u \right\| _{L_{(t_0,t_1)}^{2} L_{x}^{1}} = o(1).
\end{align}
We now treat the third term in the right side hand of \eqref{est.l11}. In the interior domain $\{\mid x\mid  \leq T^\alpha\}$, we have that
 $$
 \left\|    A_0(\mid u\mid ^2) u \right\| _{L_{(t_0,t_1)}^{2} L_{\{\mid x\mid <T^\alpha\}}^{1}} \leq  \left\|    \sqrt{A_0(\mid u\mid ^2)} \right\| _{L_{(t_0,t_1)}^{\infty} L_{\{\mid x\mid < T^\alpha\}}^{2}}  \left\| \sqrt{A_0(\mid u\mid ^2)}\ u \right\| _{L_{t}^{2} L_{x}^{2}}.
 $$
From Corollary \ref{cor.sm},  we obtain that
$$
\left\|  \sqrt{A_0(\mid u\mid ^2)}\,u \right\| _{L_{(t_0,t_1)}^{2} L_{x}^{2}} \leq \varepsilon.
$$
Note that
$$
\left\|  \sqrt{A_0(\mid u\mid ^2)} \right\| _{L_{(t_0,t_1)}^{\infty} L_{\{\mid x\mid < T^\alpha\}}^{2}} \leq \left\|  (1+r)^{-2} \right\| _{L_{t}^{\infty} L_{\{\mid x\mid  < T^\alpha\}}^{1}}^{\frac 12} \lesssim \ln^{1/2} T.
$$
Hence we see that
$$
\left\| A_0(\mid u\mid ^2) u \right\| _{L_{(t_0,t_1)}^{2} L_{\{\mid x\mid < T^\alpha\}}^{1}}  \lesssim  \varepsilon \ln^{1/2} T.
$$
Now we make use of Remark \ref{rem.des} to deduce that
\begin{align}\label{eq.smot1}
\left\|  A_0(\mid u\mid ^2) u \right\| _{L_{(t_0,t_1)}^{2} L_{\{\mid x\mid < T^\alpha\}}^{1}} = o(1).
\end{align}
For the exterior domain  $\{\mid x\mid  >T^\alpha\}$, by applying Lemma \ref{lem.bf1}, we conclude that
$$
\left\|    A_0(\mid u\mid ^2) u \right\| _{L_{(t_0,t_1)}^{2} L_{\{\mid x\mid >T^\alpha\}}^{1}} \leq \left\|  \frac{1}{r^2} \right\| _{L_{(t_0,t_1)}^{2} L_{\{\mid x\mid >T^\alpha\}}^{2}} \left\| u \right\| _{L_{(t_0,t_1)}^{\infty} L_{x}^{2}}=o(1).
$$
This jointly with \eqref{eq.smot1} shows that
\begin{align} \label{est.l1111}
\left\|  A_0(\mid u\mid ^2) u \right\| _{L_{(t_0,t_1)}^{2} L_x^1}=o(1).
\end{align}
Combining \eqref{est.l11}, \eqref{eq.a0f1}, \eqref{eq.a0s2} and \eqref{est.l1111}, then we get that
\begin{align} \label{est.l111}
\begin{aligned}
&{
\left\| \int_{t_0}^{t_{1}} e^{\textnormal{i}(t-\tau) \Delta} \Lambda_1(u) (\tau) \,d \tau\right\| _{L^q((t_1,\infty);L^r(\mathbb{R}^2))}
}
=o(1).
\end{aligned}
\end{align}
From \eqref{est.l333}, \eqref{est.l222} and \eqref{est.l111}, then we arrive at
\begin{align*}
{
\left\| \int_{t_0}^{t_{1}} e^{\textnormal{i}(t-\tau) \Delta} \Lambda(u) (\tau) \,d \tau\right\| _{L^q((t_1,\infty);L^r(\mathbb{R}^2))}
}
 \lesssim  o(1).
\end{align*}
This together with \eqref{est.s1} readily suggests that \eqref{eq.4.4} holds true. Thus we have completed the proof of Theorem \ref{thm.scattering1} $\textnormal{i)}$.
\end{proof}

\subsection{Blow-up of solutions} We now discuss finite time blow-up of solutions to the Cauchy problem for \eqref{CSS} with initial data in $\mathcal{K}^-$. For this, we first give equivalent variational characterization of the ground state energy $d$.

\begin{lem} \label{lem.ide}
Let $p>3$, then
\begin{align*}
\begin{split}
d&= \inf \left\{L(u): u \in H^1_{rad}(\mathbb{R}^2) \backslash \{0\}, \, K(u) \leq 0\right\} \\
&=  \inf \left\{L(u): u \in H^1_{rad}(\mathbb{R}^2) \backslash \{0\}, \, K(u)  < 0\right\},
\end{split}
\end{align*}
where
$$
L(u)=S(u)-\frac 12 K(u).
$$
\end{lem}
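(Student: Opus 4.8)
The plan is to exploit the fact that the functional $L$ is, by design, free of the covariant kinetic term. Indeed, subtracting $\tfrac12 K(u)$ from $S(u)$ cancels the term $\tfrac12(\|D_1 u\|_{L^2(\mathbb{R}^2)}^2 + \|D_2 u\|_{L^2(\mathbb{R}^2)}^2)$, leaving
$$
L(u) = \frac12 \|u\|_{L^2(\mathbb{R}^2)}^2 + \frac{p-3}{2(p+1)} \|u\|_{L^{p+1}(\mathbb{R}^2)}^{p+1}.
$$
Since $p>3$, this is nonnegative, and under the scaling $u_\lambda(x)=\lambda u(\lambda x)$ one reads off from $\|u_\lambda\|_{L^2}=\|u\|_{L^2}$ and the $\lambda^{p-1}$ factor in \eqref{ide.s} that
$$
L(u_\lambda) = \frac12 \|u\|_{L^2(\mathbb{R}^2)}^2 + \frac{p-3}{2(p+1)} \lambda^{p-1}\|u\|_{L^{p+1}(\mathbb{R}^2)}^{p+1},
$$
which is strictly increasing in $\lambda$ for every $u\neq 0$. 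Denote by $d_1$ and $d_2$ the two infima on the right-hand side of the claimed identity. Because $\{K<0\}\subset\{K\le 0\}$ one trivially has $d_1\le d_2$, so it remains to prove $d\le d_1$ and $d_2\le d$.

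For $d\le d_1$, I would take any $u\neq 0$ with $K(u)\le 0$. If $K(u)=0$ then $L(u)=S(u)\ge d$ directly. If $K(u)<0$, I rescale via \eqref{ide.k}: since \eqref{MGN} forces $\|D_1 u\|_{L^2(\mathbb{R}^2)}^2+\|D_2 u\|_{L^2(\mathbb{R}^2)}^2>0$ and $\|u\|_{L^{p+1}(\mathbb{R}^2)}>0$ for $u\ne 0$, the map $\lambda\mapsto K(u_\lambda)=\lambda^2(\|D_1 u\|_{L^2}^2+\|D_2 u\|_{L^2}^2)-\tfrac{\lambda^{p-1}(p-1)}{p+1}\|u\|_{L^{p+1}}^{p+1}$ is positive for small $\lambda$ (the $\lambda^2$ term dominates $\lambda^{p-1}$ as $p>3$) and negative at $\lambda=1$; by continuity there is $\lambda_0\in(0,1)$ with $K(u_{\lambda_0})=0$. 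Then $S(u_{\lambda_0})\ge d$ by definition of $d$, while $K(u_{\lambda_0})=0$ gives $S(u_{\lambda_0})=L(u_{\lambda_0})$, and monotonicity of $L(u_\lambda)$ with $\lambda_0<1$ yields $L(u_{\lambda_0})\le L(u)$. Hence $d\le L(u)$, and taking the infimum gives $d\le d_1$.

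For $d_2\le d$, I would start from any $u\neq 0$ with $K(u)=0$ and push it into $\{K<0\}$. By \eqref{ide.k}, $K(u)=0$ means $\|D_1 u\|_{L^2(\mathbb{R}^2)}^2+\|D_2 u\|_{L^2(\mathbb{R}^2)}^2=\tfrac{p-1}{p+1}\|u\|_{L^{p+1}(\mathbb{R}^2)}^{p+1}$, so for $\lambda>1$
$$
K(u_\lambda)=\frac{p-1}{p+1}\|u\|_{L^{p+1}(\mathbb{R}^2)}^{p+1}\,(\lambda^2-\lambda^{p-1})<0,
$$
since $p>3$. Thus each $u_\lambda$ with $\lambda>1$ is admissible for $d_2$, whence $d_2\le L(u_\lambda)$; letting $\lambda\to 1^+$ and using continuity together with $L(u)=S(u)$ (as $K(u)=0$) gives $d_2\le S(u)$. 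Taking the infimum over all such $u$ yields $d_2\le d$. Combining, $d\le d_1\le d_2\le d$, so all three coincide.

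The only genuinely delicate points are the scaling behaviour of the nonlocal magnetic term, which is already packaged into the identities \eqref{ide.s}--\eqref{ide.k}, and the strict positivity of both $\|D_1 u\|_{L^2(\mathbb{R}^2)}^2+\|D_2 u\|_{L^2(\mathbb{R}^2)}^2$ and $\|u\|_{L^{p+1}(\mathbb{R}^2)}$ for nonzero $u$ with $K(u)\le 0$, which is exactly the Gagliardo--Nirenberg consequence \eqref{MGN} already used in Lemma \ref{lem.gl}. Everything else is the standard scaling (fibering) argument underlying these Payne--Sattinger type characterizations.
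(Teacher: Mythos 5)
Your proof is correct and follows essentially the same route as the paper's: both rely on the explicit cancellation $L(u)=\frac12\|u\|_{L^2(\mathbb{R}^2)}^2+\frac{p-3}{2(p+1)}\|u\|_{L^{p+1}(\mathbb{R}^2)}^{p+1}$, scaling down to a $\lambda_0\in(0,1]$ with $K(u_{\lambda_0})=0$ together with monotonicity of $\lambda\mapsto L(u_\lambda)$ to get $d\le\inf\{L:K\le 0\}$, and scaling up past $\lambda=1$ with continuity to get $\inf\{L:K<0\}\le d$. The only difference is organizational (you close the argument via the chain $d\le d_1\le d_2\le d$ rather than proving the two identities separately), and your explicit verification that $\|D_1u\|_{L^2}^2+\|D_2u\|_{L^2}^2>0$ for $u\neq 0$ is a point the paper leaves implicit.
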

\begin{proof}
We start with deducing that the first identity holds true. If $K(u)=0$, then $L(u)=S(u)$. In view of the definition of $d$, then we know that
\begin{align}\label{ide.d1}
d=\inf \left\{L(u): u \in H^1_{rad}(\mathbb{R}^2) \backslash \{0\}, \, K(u)  = 0\right\}.
\end{align}
This implies that
$$
d \geq  \inf \left\{L(u): u \in H^1_{rad}(\mathbb{R}^2) \backslash \{0\}, \, K(u)  \leq 0\right\}.
$$
On the other hand, if $ K(u) \leq 0$, then it is not hard to conclude from \eqref{ide.k} that there exists $\lambda \in (0, 1]$ such that $K(u_{\lambda})=0$. Thanks to $p>3$, then
\begin{align*}
d \leq S(u_{\lambda}) & =S(u_{\lambda}) - \frac 12 K(u_{\lambda}) \\
&= \frac 12\| u\| _{L^{2}(\mathbb{R}^2)}^{2} + \frac{\lambda^{p-1}(p-3)}{2(p+1)}  \| u\| _{L^{p+1}(\mathbb{R}^2)}^{p+1} \\
& \leq \frac 12\| u\| _{L^{2}(\mathbb{R}^2)}^{2} + \frac{p-3}{2(p+1)}  \| u\| _{L^{p+1}(\mathbb{R}^2)}^{p+1} \\
& = S(u) - \frac 12 K(u)  \\
&=L(u).
\end{align*}
This gives that
$$
d \leq  \inf \left\{L(u): u \in H^1_{rad}(\mathbb{R}^2) \backslash \{0\}, \, K(u)  \leq 0\right\}.
$$
Consequently, we have that
\begin{align} \label{ide.d}
d =\inf \left\{L(u): u \in H^1_{rad}(\mathbb{R}^2) \backslash \{0\}, \, K(u)  \leq 0\right\}.
\end{align}
We next prove that the second identity holds true. If $K(u)=0$, then we can derive from \eqref{ide.k} that there exists $\tau >1$ such that $K(u_{\tau}) <0$. In addition, there holds that $u_{\tau} \to u$ in $H^1(\mathbb{R}^2)$ as $\tau \to 1^+$. Accordingly, we have that
\begin{align*}
\inf \left\{L(u): u \in H^1_{rad}(\mathbb{R}^2) \backslash \{0\}, \, K(u)  < 0\right\} \leq L(u_{\tau})  \to L(u) \quad \mbox{as} \,\,\, \tau \to 1^+.
\end{align*}
This jointly with \eqref{ide.d1} infers that
\begin{align} \label{ineq.d11}
\inf \left\{L(u): u \in H^1_{rad}(\mathbb{R}^2) \backslash \{0\}, \, K(u)  <0\right\} \leq d.
\end{align}
Observe that
$$
\inf \left\{L(u): u \in H^1_{rad}(\mathbb{R}^2) \backslash \{0\}, \, K(u)  \leq 0\right\} \leq \inf \left\{L(u): u \in H^1_{rad}(\mathbb{R}^2) \backslash \{0\}, \, K(u)  < 0\right\}.
$$
Taking into account \eqref{ide.d} and \eqref{ineq.d11}, then we obtain that
$$
\inf \left\{L(u): u \in H^1_{rad}(\mathbb{R}^2) \backslash \{0\}, \, K(u)   \leq 0\right\} = \inf \left\{L(u): u \in H^1_{rad}(\mathbb{R}^2) \backslash \{0\}, \, K(u)  <  0\right\},
$$
and the proof is completed.
\end{proof}

In order to investigate finite time blow-up of solutions, we now need to introduce a localized virial quantity, which is inspired by the ideas developed in \cite{BL}. Let $\chi : \mathbb{R}^2 \to \R$ be a smooth function with regularity property $\nabla^k \chi \in L^{\infty}(\mathbb{R}^2)$ for $1 \leq k \leq 4$ such that
\begin{equation*}
\chi(r)=\left\{
\begin{array}{lr}
\frac{r^2}{2}  \quad  \quad \quad \,\, \, \text{for} \, \, r \leq 1,\\
\text{const.}  \quad \, \,\,\, \, \,  \text{for} \, \, r \geq 10,
\end{array}
\quad \mbox{and} \,\,\, \chi''(r) \leq 1 \,\, \mbox{for any} \,\,r \geq 0.
\right.
\end{equation*}
For $R>0$, we define a function $\chi_R: \R \to \R$ by
\begin{align*}
\chi_R(r)=R^2 \chi\left(\frac{r}{R}\right).
\end{align*}
It is simple to check that
\begin{align*}
1- \chi_R''(r) \geq 0, \,\,\, 1 -\frac{\chi_R'(r)}{r} \geq 0, \,\,\, 2- \Delta \chi_R(r) \geq 0 \,\, \, \mbox{for any} \,\, \, r \geq 0.
\end{align*}
We now introduce the associated localized virial quantity as
\begin{align} \label{def.virial}
V_{\chi_R}[u(t)]=\mbox{Im} \int_{\mathbb{R}^2} \overline{u} \left(D_1 u \, \partial_1 \chi_R + D_2 u \, \partial_2 \chi_R \right) \, dx.
\end{align}

\begin{lem} \label{lem.virial}
Let $u \in C([0, T_{\textnormal{max}}), H^1_{rad}(\mathbb{R}^2))$ be a solution to the Cauchy problem for \eqref{CSS}, then, for any $t \in [0, T_{\textnormal{max}})$,
\begin{align*}
\frac{d}{dt}V_{\chi_R}[u(t)] \lesssim 2K(u(t)) +  R^{-\frac{p-1}{2}} \left(\| D_1 u(t)\| ^{\frac{p-1}{2}}_{L^2(\mathbb{R}^2)} + \| D_2 u(t)\| ^{\frac{p-1}{2}}_{L^2(\mathbb{R}^2)}\right) +  R^{-2}.
\end{align*}
\end{lem}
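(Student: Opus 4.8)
The plan is to compute $\frac{d}{dt}V_{\chi_R}[u(t)]$ directly from the equation of motion and then control the error terms coming from the region where $\chi_R$ deviates from the exact quadratic $r^2/2$. I would begin by recalling that $V_{\chi_R}$ is the localized version of the generator of dilations; for the choice $\chi_R(r)=r^2/2$ (no truncation) one has the classical virial identity $\frac{d}{dt}V[u]=2K(u(t))$ exactly. This is the same structure that appears in the Morawetz computation already carried out in Lemma \ref{lem.stest}, where the derivative of $M_{\psi_R}$ produced the combination $|D_1u|^2+|D_2u|^2-\frac{p-1}{p+1}|u|^{p+1}$ weighted by cutoff functions. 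I expect the same covariant integration-by-parts machinery (which handles the nonlocal gauge terms $A_0$ and $A_\theta$) to apply here, and indeed one can invoke \cite[Lemma 6.2]{Gou} as was done for \eqref{ide.dem}.

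\textbf{Extracting the main term and isolating the errors.}
First I would differentiate \eqref{def.virial} in time, substitute the equation \eqref{sys1}, and organize the result into a leading term plus corrections governed by the derivatives of $\chi_R$. Writing $\chi_R''(r)$, $\chi_R'(r)/r$ and $\Delta\chi_R(r)$, I would group terms so that where $\chi_R(r)=r^2/2$ (i.e.\ $r\le R$) the kinetic and nonlinear pieces combine exactly into $2K(u(t))$ via the covariant identities in \eqref{eq.gg3}. The corrections are supported on $\{r\ge R\}$, where $1-\chi_R''$, $1-\chi_R'/r$ and $2-\Delta\chi_R$ are nonzero but bounded, and where the fourth-order term involving $\Delta^2\chi_R$ contributes an $\mathcal{O}(R^{-2})$ error exactly as the $\mathcal{O}(R^{-2})$ remainder in \eqref{ide.dem}. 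The key point is that $\nabla^k\chi_R$ scales like $R^{2-k}$, so the biLaplacian term is $O(R^{-2})$.

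\textbf{The nonlinear error and the main obstacle.}
The term I expect to be the main obstacle is the nonlinear contribution on the exterior region, namely the piece
$$
\int_{r\ge R}\Big(2-\Delta\chi_R\Big)\,|u|^{p+1}\,dx,
$$
which must be shown to be $\mathcal{O}\big(R^{-(p-1)/2}(\|D_1u\|^{(p-1)/2}_{L^2}+\|D_2u\|^{(p-1)/2}_{L^2})\big)$. To handle it I would use the Strauss inequality \eqref{Strauss} to extract a factor $r^{-(p-1)/2}\le R^{-(p-1)/2}$ from $|u|^{p-1}$ on the exterior, leaving an $L^2$-type quantity; since $|2-\Delta\chi_R|$ is bounded, one estimates
$$
\int_{r\ge R}|u|^{p+1}\,dx\lesssim R^{-\frac{p-1}{2}}\,\||x|^{1/2}u\|^{p-1}_{L^\infty}\,\|u\|^2_{L^2}\lesssim R^{-\frac{p-1}{2}}\,\|u\|^{p-1}_{H^1}\,\|u\|^2_{L^2},
$$
and then one bounds the $H^1$ factor by the diamagnetic inequality (Lemma \ref{lem.diaineq}) together with the gauge identity to produce the covariant gradient norms $\|D_1u\|+\|D_2u\|$, which gives the stated power $(p-1)/2$. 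Combining the exact interior term $2K(u(t))$ with the two error bounds $\mathcal{O}(R^{-(p-1)/2})$ and $\mathcal{O}(R^{-2})$ yields the claimed inequality. The delicate bookkeeping is ensuring the nonlocal terms $A_0$ and $A_\theta^2/r^2$ cancel or are absorbed into $K$ on the interior (which they do, by the same computation that produced \eqref{ide.dem}) and do not leak uncontrolled exterior contributions; this is where I would be most careful, invoking Lemma \ref{lem.ineq} to bound any residual $A_\theta$ factors uniformly.
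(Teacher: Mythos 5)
Your overall strategy (differentiate $V_{\chi_R}$, use the covariant integration-by-parts machinery of \cite[Lemma 6.2]{Gou} as in \eqref{ide.dem}, recover $2K(u)$ from the region where $\chi_R=r^2/2$, and control exterior errors plus the $\Delta^2\chi_R$ term by $\mathcal{O}(R^{-2})$) is the right one; the paper itself gives no computation at all and simply quotes \cite[Lemma 6.4]{Gou}, so your reconstruction is the natural argument behind that citation. However, there is a genuine quantitative gap in your treatment of the exterior nonlinear term, and it is precisely the part of the statement that matters later. Using the inhomogeneous Strauss inequality \eqref{Strauss} as you do,
\begin{align*}
\int_{r\ge R}|u|^{p+1}\,dx \;\lesssim\; R^{-\frac{p-1}{2}}\,\|u\|_{H^1(\mathbb{R}^2)}^{p-1}\,\|u\|_{L^2(\mathbb{R}^2)}^2,
\end{align*}
you obtain the gradient norm to the power $p-1$, not $(p-1)/2$; passing to covariant derivatives via the diamagnetic inequality does not change this exponent, so your argument proves a strictly weaker inequality than the one stated. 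To get the power $(p-1)/2$ one must use the homogeneous (bilinear) form of the radial Sobolev inequality, $|u(r)|^2\lesssim r^{-1}\,\|u\|_{L^2(\mathbb{R}^2)}\,\|\nabla |u|\,\|_{L^2(\mathbb{R}^2)}$, so that
\begin{align*}
\sup_{r\ge R}|u|^{p-1}\;\lesssim\; R^{-\frac{p-1}{2}}\,\|u\|_{L^2(\mathbb{R}^2)}^{\frac{p-1}{2}}\,\bigl(\|D_1u\|_{L^2(\mathbb{R}^2)}^2+\|D_2u\|_{L^2(\mathbb{R}^2)}^2\bigr)^{\frac{p-1}{4}},
\end{align*}
where the $\|u\|_{L^2}$ factors are constants by mass conservation and the diamagnetic inequality (Lemma \ref{lem.diaineq}) replaces $\nabla|u|$ by the covariant derivatives.

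This exponent is not a cosmetic issue: the lemma is applied in the blow-up proof of Theorem \ref{thm.scattering1} ii), where $\|D_ju\|_{L^2}$ may be unbounded, and the absorption by Young's inequality ($\eps\|D_ju\|_{L^2}^2+C_\eps$) works exactly because $(p-1)/2\le 2$ when $p\le 5$. With your exponent $p-1$ the absorption would require $p\le 3$, which contradicts the standing assumption $p>3$, so the weaker bound cannot be repaired downstream. Aside from this point, your bookkeeping of the interior cancellation and of the $\mathcal{O}(R^{-2})$ remainder is consistent with the identity \eqref{ide.dem} used for the Morawetz estimate.
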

\begin{proof}
This result is a direct consequence of \cite[Lemma 6.4]{Gou}, then we omit its proof here.
\end{proof}

With Lemmas \ref{lem.ide} and \ref{lem.virial} in hand, we are now ready to prove finite time blow-up of solutions to the Cauchy problem for \eqref{CSS} with initial data in $\mathcal{K}^+$.

\begin{proof}[Proof of Theorem \ref{thm.scattering1} $\textnormal{ii)}$] For simplicity, we shall write $u=u(t)$ in the following. By a similar way as the proof of Theorem \ref{thm.scattering1} $\textnormal{i)}$, we can easily show that the set $\mathcal{K}^-$ is invariant under the flow of the Cauchy problem for \eqref{CSS}. Since $u_0 \in \mathcal{K}^-$, then $K(u)<0$ for any $t \in [0, T_{\textnormal{max}})$. Thus, from \eqref{ide.k}, we know that there is a constant $0<\lambda^* <1$ such that $K(u_{\lambda^*})=0$. Moreover, it is obvious that the function $\lambda \mapsto S(u_{\lambda})$ is concave on $[\lambda^*, 1]$. Hence
\begin{align*}
S(u_{\lambda^*}) -S(u) \leq \left( \lambda^*-1\right) \frac{d}{d \lambda} S(u_{\lambda}){\mid_{\lambda=1}} = \left( \lambda^*-1\right) K(u).
\end{align*}
Noting that $K(u) <0$ and $d \leq S(u_{\lambda^*})$, then we obtain that
\begin{align*}
K(u) \leq \left(1-\lambda^*\right) K(u) \leq S(u)-S(u_{\lambda^*})  \leq S(u)- d.
\end{align*}
This indicates that
\begin{align} \label{lbdd}
\| D_1 u\| ^{2}_{L^2{(\mathbb{R}^2)}} + \| D_2 u \| ^{2}_{L^2{(\mathbb{R}^2)}} \leq \frac{p-1}{p+1} \| u\| ^{p+1}_{L^{p+1}{(\mathbb{R}^2)}} + S(u) -d.
\end{align}
Since $p \leq 5$, then, by using Young's inequality, we can deduce from Lemma \ref{lem.virial} that, for any $\eps>0$, there exists $R>0$ sufficiently large such that
\begin{align*}
\frac{d}{dt}V_{\chi_R}[u] & \lesssim 2K(u) + \eps \left(\| D_1 u\| ^{2}_{L^2(\mathbb{R}^2)} + \| D_2 u\| ^{2}_{L^2(\mathbb{R}^2)}\right) + \eps,
\end{align*}
which suggests that
\begin{align*}
\frac{d}{dt}V_{\chi_R}[u]  & \lesssim 4 S(u)-2 \| u\| _{L^2(\mathbb{R}^2)}^2 + \frac{6-2p}{p+1} \| u\| _{{L^{p+1}(\mathbb{R}^2)}}^{p+1}  \\
& \quad + \eps \left(\| D_1 u\| ^{2}_{L^2(\mathbb{R}^2)} + \| D_2 u\| ^{2}_{L^2(\mathbb{R}^2)}\right) + \eps.
\end{align*}
Applying \eqref{lbdd}, we now get that
\begin{align} \label{vinequ}
\begin{split}
\frac{d}{dt}V_{\chi_R}[u] & \lesssim  4 S(u)-2 \| u\| _{L^2(\mathbb{R}^2)}^2 + \frac{6-2p+ \eps(p-1)}{p+1} \| u\| _{{L^{p+1}(\mathbb{R}^2)}}^{p+1} \\
& \quad + \eps \left(S(u)-d\right) + \eps.
\end{split}
\end{align}
Due to $K(u)<0$, by Lemma \ref{lem.ide}, we know that
$$
d \leq L(u)=\frac{p-3}{2(p+1)} \| u\| _{{L^{p+1}(\mathbb{R}^2)}}^{p+1} + \frac 12 \| u\| _{L^2(\mathbb{R}^2)}^2,
$$
from which we infer that
\begin{align}\label{ineq.p}
2d-\| u\| _{L^2(\mathbb{R}^2)}^2 \leq  \frac{p-3}{p+1} \| u\| _{L^{p+1}(\mathbb{R}^2)}^{p+1}.
\end{align}
Since $p>3$, then, for any $\eps>0$ sufficiently small,
$$
\frac{6-2p+ \eps(p-1)}{p-3} <0.
$$
Making use of \eqref{ineq.p}, then we obtain from \eqref{vinequ} that
\begin{align*}
\frac{d}{dt}V_{\chi_R}[u]  \leq  4 S(u)-2 \| u\| _{L^2(\mathbb{R}^2)}^2 + \frac{6-2p+ \eps(p-1)}{p-3}  \left(2d-  \| u\| _{L^2(\mathbb{R}^2)}^2\right) +  \eps \left(S(u)-d\right) + \eps.
\end{align*}
Noting that $S(u) <d$, by the conservation laws, we derive that there exists a constant $\delta>0$ small such that $S(u) \leq (1 - \delta) d$. Therefore, we get that
\begin{align*}
\frac{d}{dt}V_{\chi_R}[u] &\lesssim - \left(4 \delta + \eps \delta- \frac{2\eps(p-1)}{p-3} \right)d - \frac{\eps(p+1)}{p-3}  \| u\| _{L^2(\mathbb{R}^2)}^2 + \eps \\
&\lesssim -\left(4 \delta + \eps \delta-\frac{2\eps(p-1)}{p-3}  \right)d + \eps.
\end{align*}
For any $\eps>0$ sufficiently small, then we have that
$$
\frac{d}{dt}V_{\chi_R}[u] \lesssim -2\delta  d.
$$
This readily indicates that $u$ has to blow up in finite time, and the proof is completed.
\end{proof}

\section{Appendix}

\subsection{Smallness of solutions} We shall prove that \eqref{eq.key1m} holds by assuming that
\begin{align}\label{eq.sm21m}
\left\| e^{\textnormal{i} \left(t-t_{1}\right) \Delta} u\left(t_{1}\right)\right\| _{L^{q}\left([t_1,\infty); L^r(\mathbb{R}^{2})\right)}   \lesssim o(1),
\end{align}
where $t_1 = t_1(\varepsilon) \to \infty$ as $\varepsilon \to 0$ and $(1/q, 1/r) \in \mathfrak{T}_s$ with $s \in (0,1)$ close to $1$. For this, we need to establish the following result.







\begin{prop}\label{p.mca1}
Let $(1/q, 1/r) \in \mathfrak{T}_s$ with $s \in (0,1)$ close to $1$, then
\begin{align*}
\begin{aligned}
\left\|  \int_{t_1}^t e^{-\mathrm{i} (t-\tau)\Delta} \Lambda(u(\tau)) \,ds\right\| _{L^{q}\left([t_1,\infty); L^r(\mathbb{R}^{2})\right)}  &\lesssim  \| u\| _{L_{[t_1, \infty)}^4 L_x^4}^2 + \| u\| _{L_{[t_1, \infty)}^6 L_x^6}^3 + \| u\| _{L_{[t_1, \infty)}^8 L_x^{8/3}}^4 \\
 & \quad +\left\|  u \right\| _{L^{2(p-1) }([t_1, \infty)\times \mathbb{R}^2)}^{p-1}.
\end{aligned}
\end{align*}
\end{prop}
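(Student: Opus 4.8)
The plan is to estimate separately the three contributions coming from the decomposition $\Lambda = \Lambda_1 + \Lambda_2 + \Lambda_3$ recorded in \eqref{defLa}, and to control the Duhamel term generated by each $\Lambda_j$ in $L^{q}([t_1,\infty);L^{r}(\mathbb{R}^2))$ for $(1/q,1/r)\in\mathfrak{T}_s$. The engine is the inhomogeneous Strichartz estimate of Lemma \ref{lem2.2}: combining its radial double endpoint \eqref{eq.r1} (input $L^2_tL^1_x$, output $L^2_tL^\infty_x\cap L^\infty_tL^2_x$) with the standard admissible estimates and with Christ--Kiselev's lemma \cite{CK01} to restrict the time integration to $\tau<t$, one reduces matters to bounding each $\Lambda_j(u)$ in a suitable dual Strichartz norm. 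The full triangle $\mathfrak{T}_s$, in particular the pairs lying strictly below the admissible line $1/q+1/r=1/2$, is then reached by interpolating these endpoint bounds against the uniform energy bound $\|u\|_{L^\infty_tH^1}\lesssim 1$, which supplies control at the corners $(0,1/m)$ for every $m\in(2,\infty)$; the restriction $s<1$ guarantees that only Sobolev exponents below $1$ are needed, so that the $H^1$-bound suffices.

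For the local nonlinearity $\Lambda_3=-|u|^{p-1}u$ I would use the radial endpoint. Splitting off a single factor of $u$ by H\"older in space ($1=\tfrac{p-1}{2(p-1)}+\tfrac12$) and then in time, and controlling that factor by conservation of mass, gives
\begin{align*}
\big\||u|^{p-1}u\big\|_{L^2([t_1,\infty);L^1(\mathbb{R}^2))}\le \|u\|_{L^{2(p-1)}([t_1,\infty)\times\mathbb{R}^2)}^{p-1}\,\|u\|_{L^\infty([t_1,\infty);L^2(\mathbb{R}^2))}\lesssim \|u\|_{L^{2(p-1)}([t_1,\infty)\times\mathbb{R}^2)}^{p-1},
\end{align*}
and feeding this into \eqref{eq.r1} produces the last term on the right-hand side. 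Here it is essential that $p>3$, so that $\big(\tfrac{1}{2(p-1)},\tfrac{1}{2(p-1)}\big)$ lies inside $\mathfrak{T}$.

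The nonlocal terms $\Lambda_1=A_0(|u|^2)u$ and $\Lambda_2=\big(A_\theta(|u|^2)/r\big)^2u$ are handled in the same spirit, the new ingredient being the a priori potential bounds. For $\Lambda_2$, using Lemma \ref{lem.ineq} in the form $\|A_\theta(|u|^2)/r\|_{L^4(\mathbb{R}^2)}\lesssim\|u\|_{L^{8/3}(\mathbb{R}^2)}^2$ together with H\"older in space ($1=\tfrac12+\tfrac12$) and then in time, one obtains
\begin{align*}
\Big\|\big(\tfrac{A_\theta(|u|^2)}{r}\big)^2u\Big\|_{L^2([t_1,\infty);L^1(\mathbb{R}^2))}\lesssim \|u\|_{L^8([t_1,\infty);L^{8/3}(\mathbb{R}^2))}^4\,\|u\|_{L^\infty([t_1,\infty);L^2(\mathbb{R}^2))}\lesssim \|u\|_{L^8([t_1,\infty);L^{8/3}(\mathbb{R}^2))}^4,
\end{align*}
which yields the $\|u\|_{L^8L^{8/3}}^4$ term. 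For $\Lambda_1$ the potential $A_0(|u|^2)$ is controlled by Lemma \ref{lem.bf1} and Corollary \ref{cor.a01}, which express $\|A_0(|u|^2)/|x|^a\|_{L^q}$ as products of local $L^s$-norms of $u$; distributing the four potential factors together with the external factor between a space--time norm and the uniformly bounded mass and $H^1$ norms, and choosing the splitting so as to land in a dual admissible pair, produces the remaining terms $\|u\|_{L^4L^4}^2$ and $\|u\|_{L^6L^6}^3$. In every case the leftover powers of $u$ are absorbed into $\|u\|_{L^\infty_tH^1}\lesssim 1$, which explains the gap between the degree ($5$, resp.\ $p$) of the nonlinearity and the lower degree of the exhibited space--time norms.

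The hard part will be the nonlocal character of $A_0$ and $A_\theta$: unlike a pointwise nonlinearity they cannot simply be placed in one Lebesgue space, and must be routed through the weighted maximal-function and Hardy estimates of Lemmas \ref{lem.ineq}--\ref{lem.bf1}, all the while keeping the resulting time exponent equal to $2$ (so that the $H^1$-absorption closes) and the spatial exponents dual-admissible. The second, more structural, obstacle is obtaining the bound uniformly for every $(1/q,1/r)$ in the two-dimensional region $\mathfrak{T}_s$ rather than at isolated admissible pairs; this is precisely where the interpolation between the radial endpoints \eqref{eq.r1} and the energy bound becomes indispensable.
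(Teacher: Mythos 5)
Your treatment of the admissible pairs is sound and in fact mirrors the paper's own computations: the bounds $\|\Lambda_3(u)\|_{L^2_tL^1_x}\lesssim\|u\|_{L^{2(p-1)}_{t,x}}^{p-1}$, $\|\Lambda_2(u)\|_{L^2_tL^1_x}\lesssim\|u\|_{L^8_tL^{8/3}_x}^4$ and $\|\Lambda_1(u)\|_{L^2_tL^1_x}\lesssim\|u\|_{L^8_tL^{8/3}_x}^4$ are all correct (Lemma \ref{lem.ineq} with $b=1,q=4,s=8/3$, and Lemma \ref{lem.bf1} with $a=b=0$, $q=2$, $s_1=s_2=8/3$), and feeding them into the retarded radial Strichartz estimate (Christ--Kiselev applies because the output exponent $q>2$ exceeds the dual input exponent $2$) gives the claimed inequality on the line $1/q+1/r=1/2$. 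The gap is in how you reach the pairs with $1/q+1/r<1/2$, which $\mathfrak{T}_s$ contains. Interpolating the Duhamel term between an admissible-pair bound (which is $\lesssim\mathrm{RHS}$) and the energy bound $\|\cdot\|_{L^\infty_tL^m_x}\lesssim 1$ yields only $\mathrm{LHS}\lesssim\mathrm{RHS}^\theta$ with $\theta=\theta(q,r)\in(0,1)$, i.e.\ a fractional power of the right-hand side times an absolute constant. That is not the stated inequality: in the regime where the Proposition is actually used (the Appendix bootstrap, where the right-hand side is small) one has $\mathrm{RHS}^\theta\gg\mathrm{RHS}$, and when the right-hand side is not small the interpolated bound is not comparable at all. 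Within your framework this cannot be repaired: interpolating two admissible bounds only produces admissible pairs again, and the only off-line information you invoke is the $O(1)$ energy bound, which would have to be $\lesssim\mathrm{RHS}$ for the interpolation to close.

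The paper's mechanism for the interior of the triangle is different, and it is precisely where the real work lies: it takes $q_1=q$, the admissible companion $1/r_1=1/2-1/q$, and uses the Sobolev embedding $W^{1,r_1}(\mathbb{R}^2)\hookrightarrow L^r(\mathbb{R}^2)$ (valid since $r_1>2$ and $r\geq r_1$) to dominate $\|\cdot\|_{L^q_tL^r_x}$ by $\|\cdot\|_{L^q_tW^{1,r_1}_x}$. This puts a gradient on the Duhamel term, so after Strichartz one must control $\|\nabla\Lambda_j(u)\|_{L^2_tL^1_x}$ and not merely $\|\Lambda_j(u)\|_{L^2_tL^1_x}$. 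Those derivative estimates --- using $\nabla\bigl(A_\theta(|u|^2)/r\bigr)=-A_\theta(|u|^2)/r^2-|u|^2/2$, $\nabla A_0(|u|^2)=A_\theta(|u|^2)|u|^2/r$, the maximal-function bound \eqref{eq.esmk1}, the bound $\|A_\theta(|u|^2)\|_{L^\infty}\lesssim\|u\|_{L^2}^2$, and the Strauss inequality \eqref{Strauss} --- are where the terms $\|u\|_{L^4_tL^4_x}^2$ and $\|u\|_{L^6_tL^6_x}^3$ actually come from; they do not arise from "distributing potential factors" in the gradient-free bounds, as your sketch suggests. So the derivative estimates you planned to avoid are not an optional refinement: they are the ingredient that makes the bound, linear in the stated right-hand side, hold on all of $\mathfrak{T}_s$.
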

\begin{proof}
Note first that for any $(1/q,1/r) \in \mathfrak{T}_s$ there exists a Strichartz couple
$(1/q_1, 1/r_1) $ with $1/q_1+1/r_1 = 1/2$ and $(q_1, r_1) \neq (2,\infty)$ such that
\begin{align*}
&\left\|  \int_{t_1}^t e^{\mathrm{i} (t-\tau)\Delta} \Lambda(u(\tau)) \,ds\right\| _{L^{q}\left([t_1,\infty); L^r(\mathbb{R}^{2})\right)} \\
&\lesssim \left\|  \nabla \int_{t_1}^t e^{\mathrm{i} (t-\tau)\Delta} \Lambda(u(\tau)) \,ds\right\| _{L^{q_1}\left([t_1,\infty); L^{r_1}(\mathbb{R}^{2})\right)}+ \left\|   \int_{t_1}^t e^{\mathrm{i} (t-\tau)\Delta} \Lambda(u(\tau)) \,ds\right\| _{L^{q_1}\left([t_1,\infty); L^{r_1}(\mathbb{R}^{2})\right)}.
\end{align*}
The application of Strichartz estimates further leads to
\begin{align}\label{2pm21}
\begin{split}
\hspace{-1cm}\left\| \int_{t_{1}}^{t} e^{\mathrm{i}(t-\tau) \Delta}\Lambda(u)\,d \tau\right\| _{L^{q}\left([t_1,\infty); L^r(\mathbb{R}^{2})\right)}
&\lesssim \left\| \nabla \Lambda(u) \right\| _{L^{2 }([t_1,\infty); L^1(\mathbb{R}^2))} + \left\| \Lambda(u) \right\| _{L^{2 }([t_1,\infty); L^1(\mathbb{R}^2))} \\
&=\left\|  \Lambda_1(u) \right\| _{L^{2 }([t_1,\infty); W^{1,1}(\mathbb{R}^2))} + \left\| \Lambda_2(u) \right\| _{L^{2 }([t_1,\infty); W^{1,1}(\mathbb{R}^2))} \\
& \quad + \left\|  \Lambda_3(u) \right\| _{L^{2 }([t_1,\infty); W^{1,1}(\mathbb{R}^2))}.
\end{split}
\end{align}

In the following, we shall evaluate every term in the right side hand of \eqref{2pm21}. Let us first hand the local term $\Lambda_3$. It is simple to deduce that
$$
\left\| \Lambda_3(u) \right\| _{L^{2 }([t_1,\infty); W^{1,1}(\mathbb{R}^2))} \lesssim \| u \| _{L^{\infty }([t_1,\infty); H^1(\mathbb{R}^2))}\left\| u \right\| _{L^{2(p-1) }([t_1,\infty) \times \mathbb{R}^2)}^{p-1} \lesssim \left\| u \right\| _{L^{2(p-1) }([t_1,\infty) \times \mathbb{R}^2)}^{p-1}.
$$
We next treat the nonlocal term $\Lambda_2$. In virtue of Lemma \ref{lem.ineq}, we first have that
\begin{align}\label{eq.esmk1}
 \left\| \frac{A_\theta(\mid u\mid ^2)}{r^2}\right\| _{L^q(\mathbb{R}^2)} \leq \| u\| ^2_{L^{2q}(\mathbb{R}^2)}, \quad 1 < q < \infty
\end{align}
and
\begin{align}\label{eq.esmk2}
\left\| A_\theta(\mid u\mid ^2)\right\| _{L^\infty(\mathbb{R}^2)} \lesssim \| u\| ^2_{L^{2} (\mathbb{R}^2)}.
\end{align}
Observe that
$$
\nabla \Lambda_2(u)=2u\left(\frac{A_\theta(\mid u\mid ^2)}{r}\right) \nabla\left(\frac{A_\theta(\mid u\mid ^2)}{r}\right)+\left(\frac{A_\theta(\mid u\mid ^2)}{r}\right)^2 \nabla u
$$
and
$$
\nabla\left(\frac{A_\theta(\mid u\mid ^2)}{r} \right)= - \frac{A_\theta(\mid u\mid ^2)}{r^2}- \frac{\mid u\mid ^2}{2}.
$$
Therefore, we get that
\begin{align*}
\left\| \Lambda_2(u) \right\| _{L^{2 }([t_1,\infty); W^{1,1}(\mathbb{R}^2))} &\lesssim \left\| \nabla\left(\frac{A_\theta(\mid u\mid ^2)}{r}\right)\left(\frac{A_\theta(\mid u\mid ^2)}{r}\right) u \right\| _{L_{[t_1,\infty)}^{2 } L_{x}^{1}} + \left\| \left(\frac{A_\theta(\mid u\mid ^2)}{r}\right)^2 \nabla u \right\| _{L_{[t_1,\infty)}^{2 } L_{x}^{1}}  \\
& \quad + \left\| \left(\frac{A_\theta(\mid u\mid ^2)}{r}\right)^2  u \right\| _{L_{[t_1,\infty)}^{2 } L_{x}^{1}} \\
& \lesssim \left\| \frac{A_\theta(\mid u\mid ^2)^2}{r^3} u \right\| _{L_{[t_1,\infty)}^{2 } L_{x}^{1}}  + \left\|  \frac{A_\theta(\mid u\mid ^2)}{r} \mid u\mid ^3 \right\| _{L_{[t_1,\infty)}^{2 } L_{x}^{1}} \\
& \quad + \left\| \left(\frac{A_\theta(\mid u\mid ^2)}{r}\right)^2  \left(\nabla u +u \right) \right\| _{L_{[t_1,\infty)}^{2 } L_{x}^{1}}.
\end{align*}
Using \eqref{eq.esmk1} and  \eqref{eq.esmk2}, we can derive that
\begin{align} \label{e11}
\begin{split}
\left\| \frac{A_\theta(\mid u\mid ^2)^2}{r^3} u \right\| _{L_{[t_1,\infty)}^{2} L_{x}^{1}}&\lesssim \left\|  \left(\frac{A_\theta(\mid u\mid ^2)}{r^2}\right)^{\frac 3 2} \right\| _{L_{[t_1,\infty)}^{2} L_{x}^{2}} \left\|  \left(A_\theta(\mid u\mid ^2)\right)^{\frac 1 2} \right\| _{L_{[t_1,\infty)}^{\infty} L_{x}^{\infty}} \left\|  u \right\| _{L_{[t_1,\infty)}^{\infty} L_{x}^{2}} \\
&\lesssim\left\| \frac{A_\theta(\mid u\mid ^2)}{r^2} \right\| _{L_{[t_1,\infty)}^{3} L_{x}^{3}}^{\frac 3 2} \lesssim
\| u\| _{L^6_{[t_1,\infty)} L_x^6}^3.
\end{split}
\end{align}
and
\begin{align*}
\left\| \left(\frac{A_\theta(\mid u\mid ^2)}{r}\right)^2 \left(\nabla u +u\right) \right\| _{L_{[t_1,\infty)}^{2 } L_{x}^{1}} &\lesssim \left\|  \frac{A_\theta(\mid u\mid ^2)}{r^2} \right\| _{L_{[t_1,\infty)}^{2} L_{x}^{2}} \left\| A_\theta(\mid u\mid ^2)\right\| _{L_{[t_1,\infty)}^{\infty} L_{x}^{\infty}} \left\| u \right\| _{L_{[t_1,\infty)}^{\infty} H^1} \\
&\lesssim \| u\| _{L^4_{[t_1,\infty)} L_x^4}^2.
\end{align*}
In addition, we have that
$$
\left\|  \frac{A_\theta(\mid u\mid ^2)}{r} \mid u\mid ^3 \right\| _{L_{[t_1,\infty)}^{2 } L_{x}^{1}} \lesssim\left\| \frac{A_\theta(\mid u\mid ^2)}{r^2}  \right\| _{L_{[t_1,\infty)}^{2} L_{x}^{2}}\left\| r\mid u\mid ^2 \right\| _{L_{[t_1,\infty)}^{\infty} L_{x}^{\infty}}\left\| u\right\| _{L_{[t_1,\infty)}^{\infty} L_{x}^{2}} \lesssim \| u\| _{L^4_{[t_1,\infty)} L_x^4}^2.
$$
where we also used the Strauss inequality \eqref{Strauss}. From estimates above, then we conclude that
\begin{align*}
\left\| \Lambda_2(u) \right\| _{L^{2 }([t_1,\infty); W^{1,1}(\mathbb{R}^2))}  \lesssim \| u\| _{L^4_{[t_1,\infty)} L_x^4}^2
+ \| u\| _{L^6_{[t_1,\infty)} L_x^6}^3.
\end{align*}
We now turn to deal with the nonlocal $\Lambda_1$. Notice that
$$
\nabla \Lambda_1(u)=\nabla A_0(\mid u\mid ^2) u +  A_0(\mid u\mid ^2) \nabla u=\frac{A_\theta(\mid u\mid ^2)}{r} \mid u\mid ^3+ A_0(\mid u\mid ^2) \nabla u.
$$
This implies that
\begin{align*}
\left\| \Lambda_1(u) \right\| _{L^{2 }[t_1, \infty); W^{1,1}(\mathbb{R}^2))} \lesssim \left\|  \frac{A_\theta(\mid u\mid ^2)}{r} \mid u\mid ^3\right\| _{L_{[t_1, \infty)}^{2 } L_{x}^{1}} +
\left\| A_0(\mid u\mid ^2) \left(\nabla u +u\right) \right\| _{L_{[t_1, \infty)}^{2} L^1_x}.
\end{align*}
It follows from Lemma \ref{lem.bf1} that
$$
\left\| A_0(\mid u\mid ^2) \right\| _{L^2(\mathbb{R}^2)} \lesssim  \| u\| ^4_{L^{\frac8 3}(\mathbb{R}^2)}.
$$

$$
\left\| A_0(\mid u\mid ^2) \left(\nabla u +u\right) \right\| _{L_{[t_1, \infty)}^{2} L^1_x} \lesssim \left\| A_0(\mid u\mid ^2) \right\| _{L_{[t_1, \infty)}^{2} L^2_x} \left\| u\right\| _{L_{[t_1, \infty)}^{\infty}H^1} \lesssim \| u\| ^4_{L^8_{[t_1, \infty)} L_x^{\frac 8 3}}.
$$
This together with \eqref{e11} results in
$$
\left\| \Lambda_1(u) \right\| _{L^{2 }([t_1, \infty); W^{1,1}(\mathbb{R}^2))} \lesssim \| u\| _{L_{[t_1, \infty)}^4 L_x^4}^2 + \| u\| _{L_{[t_1, \infty)}^8 L_x^{\frac 8 3}}^4.
$$
This completes the proof.
\end{proof}

At this point, record that $u$ satisfies the following integral equation,
$$
u(t) = e^{\mathrm{i} (t-t_1) \Delta} u(t_1) +\mathrm{i} \int_{t_1}^t e^{\mathrm{i} (t-\tau)\Delta} \Lambda(u(\tau)) \,ds.
$$
Making use of  \eqref{eq.sm21m} and Proposition \ref{p.mca1} along with iterative arguments, then we can obtain \eqref{eq.key1m}.

\subsection{Estimate of Nemytskii operator} We shall establish an estimate for the following Nemytskii operator,
\begin{equation*} 
u (x) \mapsto f(u)(x)=u(x)\mid u(x)\mid ^{p-1}, \quad  x \in \mathbb{R}^n.
\end{equation*}
For this purpose, we need to introduce
the Besov space ${B^\sigma_{q,2}(\mathbb{R}^n)}$ equipped with the norm
\begin{equation*} 
\begin{aligned}
&\| u\| _{B^\sigma_{q,2}(\mathbb{R}^n)} =
 \| u\| _{L^q(\mathbb{R}^n)} +  \left\| \ \left\|  \frac{\mid u(x)-u(x-h)\mid }{\mid h\mid ^{\frac n 2+\sigma}}  \right\| _{L^{q}(R^n_x)} \right\| _{L^{2}(\mathbb{R}^n_h)}, \quad u \in B^{\sigma}_{q,2}(\mathbb{R}^n).
\end{aligned}
\end{equation*}
\begin{lem} \label{l.Nem1}
Let $0<s<1$, $1<r<2$ and $n \geq 2$, then, for any $s <\sigma <1$,  there holds that
\begin{align*}
\| D^s f(u)\| _{L^r(\mathbb{R}^n)} \lesssim \| u\| _{B^\sigma_{q_1,2}(\mathbb{R}^n)} \| u\| _{L^{q_2(p-1)}(\mathbb{R}^n)}^{p-1},
\end{align*}
where
$$
\frac{1}{r} = \frac{1}{q_1} + \frac{1}{q_2}, \quad q_1 \geq 2.
$$
\end{lem}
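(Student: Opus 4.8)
The plan is to reduce everything to a first-difference (square-function) description of the fractional derivative and then exploit the elementary Lipschitz bound for the map $z\mapsto z|z|^{p-1}$. Concretely, I would start from the Strichartz--Stein characterization of the homogeneous fractional Sobolev norm: for $1<r<\infty$ and $0<s<1$ one has the equivalence
\begin{align*}
\|D^s g\|_{L^r(\R^n)} \approx \left\| \left\| \frac{g(\cdot)-g(\cdot-h)}{|h|^{\frac n2+s}} \right\|_{L^2(\R^n_h)} \right\|_{L^r(\R^n_x)}.
\end{align*}
Applying this with $g=f(u)$ turns the problem into estimating a mixed norm in which the $L^2$ integration in the increment $h$ sits \emph{inside} the $L^r$ integration in $x$.

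The key structural step is to interchange these two integrations. Since $1<r<2$, Minkowski's integral inequality lets me pull the $L^2_h$ norm to the outside at the cost of reversing the order, giving
\begin{align*}
\|D^s f(u)\|_{L^r(\R^n)} \lesssim \left\| \left\| \frac{f(u)(\cdot)-f(u)(\cdot-h)}{|h|^{\frac n2+s}} \right\|_{L^r(\R^n_x)} \right\|_{L^2(\R^n_h)};
\end{align*}
this is precisely where the hypothesis $r<2$ is used. Next I would invoke the pointwise bound $|f(z_1)-f(z_2)|\lesssim(|z_1|^{p-1}+|z_2|^{p-1})|z_1-z_2|$, valid because $f(z)=z|z|^{p-1}$ is $C^1$ with $|f'(z)|\lesssim|z|^{p-1}$, to obtain for each fixed $h$
\begin{align*}
\|f(u)(\cdot)-f(u)(\cdot-h)\|_{L^r_x} \lesssim \|u\|_{L^{q_2(p-1)}}^{p-1}\,\|u(\cdot)-u(\cdot-h)\|_{L^{q_1}_x},
\end{align*}
by H\"older's inequality with $\tfrac1r=\tfrac1{q_1}+\tfrac1{q_2}$ together with the translation invariance of Lebesgue measure (which absorbs the $|u(\cdot-h)|^{p-1}$ contribution into the same factor). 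Since $\|u\|_{L^{q_2(p-1)}}^{p-1}$ is independent of $h$, taking the $L^2_h$ norm yields
\begin{align*}
\|D^s f(u)\|_{L^r(\R^n)} \lesssim \|u\|_{L^{q_2(p-1)}(\R^n)}^{p-1}\left\| \frac{\|u(\cdot)-u(\cdot-h)\|_{L^{q_1}_x}}{|h|^{\frac n2+s}} \right\|_{L^2(\R^n_h)},
\end{align*}
and the last factor is exactly the homogeneous Besov seminorm $|u|_{\dot B^s_{q_1,2}}$.

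It remains to absorb the homogeneous seminorm of smoothness $s$ into the inhomogeneous norm $\|u\|_{B^\sigma_{q_1,2}}$ with $s<\sigma<1$. Here I would split the $h$-integral into $|h|\le1$ and $|h|>1$: on $|h|\le1$ the weight satisfies $|h|^{-n-2s}\le|h|^{-n-2\sigma}$ because $\sigma>s$, so this part is controlled by the $\sigma$-seminorm appearing in $\|u\|_{B^\sigma_{q_1,2}}$; on $|h|>1$ one uses $\|u(\cdot)-u(\cdot-h)\|_{L^{q_1}}\le2\|u\|_{L^{q_1}}$ together with the convergence of $\int_{|h|>1}|h|^{-n-2s}\,dh$ (finite since $s>0$) to bound this part by $\|u\|_{L^{q_1}}\le\|u\|_{B^\sigma_{q_1,2}}$. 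Combining the two pieces gives $|u|_{\dot B^s_{q_1,2}}\lesssim\|u\|_{B^\sigma_{q_1,2}}$, which closes the argument. The main obstacle I anticipate is getting the two integration orders right: the square-function characterization naturally produces $L^2_h$ inside $L^r_x$, and only the restriction $r<2$ makes Minkowski's inequality point in the direction needed to land on the Besov norm as defined in the paper; the requirement $\sigma>s$ (with $\sigma<1$, so that first differences still characterize $B^\sigma_{q_1,2}$) is then exactly what is needed to trade the homogeneous seminorm for the inhomogeneous one.
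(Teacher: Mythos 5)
Your Minkowski step runs in the wrong direction, and this breaks the proof. Writing $\Delta_h g(x)=g(x)-g(x-h)$, the generalized Minkowski inequality says that for $1\le q\le p$ one has $\bigl\| \|F\|_{L^q_y}\bigr\|_{L^p_x} \le \bigl\| \|F\|_{L^p_x}\bigr\|_{L^q_y}$: one may move the \emph{smaller} exponent to the outside. In your second display the inner exponent is $2$ and the outer is $r<2$, so Minkowski gives
\begin{align*}
\left\| \left\| \frac{\Delta_h f(u)}{|h|^{\frac n2+s}}\right\|_{L^r(\R^n_x)}\right\|_{L^2(\R^n_h)} \le \left\| \left\| \frac{\Delta_h f(u)}{|h|^{\frac n2+s}}\right\|_{L^2(\R^n_h)}\right\|_{L^r(\R^n_x)},
\end{align*}
exactly the reverse of what you assert. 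What you need is equivalent to the embedding $\dot B^s_{r,2}\hookrightarrow \dot F^s_{r,2}$ (controlling the Sobolev norm of $f(u)$ by its $B^s_{r,2}$-type norm), and for $r<2$ this is false: the sharp embeddings are $B^s_{r,\min(r,2)}=B^s_{r,r}\hookrightarrow F^s_{r,2}\hookrightarrow B^s_{r,\max(r,2)}=B^s_{r,2}$, so for $r<2$ the $B^s_{r,2}$ norm is the \emph{smaller} quantity and cannot dominate $\|D^s f(u)\|_{L^r}$. A telling symptom is that your argument never uses the hypothesis $q_1\ge 2$. A secondary inaccuracy: the two-sided equivalence $\|D^s g\|_{L^r}\approx\|S_s g\|_{L^r}$, with $S_s g=\bigl(\int |\Delta_h g|^2|h|^{-n-2s}dh\bigr)^{1/2}$ taken over all $h\in\R^n$, fails for $1<r\le \tfrac{2n}{n+2s}$ (for a smooth compactly supported $g$ one has $S_s g(x)\sim |x|^{-(n+2s)/2}$ at infinity, which is not in $L^r$ there); only the direction $\|D^s g\|_{L^r}\lesssim\|S_s g\|_{L^r}$, the one you use, holds on all of $1<r<2$, and even that is the nontrivial half of Stein's square-function theorem (proved by duality against the exponent $r'>2$).

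The paper avoids this trap by never interchanging norms at the exponent $r$: it starts from the pointwise kernel representation $D^s f(u)(x)=c_{n,s}\int (f(u)(x)-f(u)(y))|x-y|^{-n-s}\,dy$, applies the Lipschitz bound and Cauchy--Schwarz in the $y$-variable (splitting $|x-y|\le 1$ and $|x-y|>1$, with tilted smoothness exponents $\sigma$ and $s-\delta$), then H\"older in $x$ with exponents $q_1/r$ and $q_2/r$, and only at that stage performs the Minkowski interchange --- at the exponent $q_1\ge 2$, where it is legitimate; this is precisely where the hypothesis $q_1\ge2$ enters. If you want to keep your architecture, the repair is to take the inner $h$-exponent equal to $r$ rather than $2$: the embedding $B^s_{r,r}=B^s_{r,\min(r,2)}\hookrightarrow F^s_{r,2}$ gives $\|D^s f(u)\|_{L^r}\lesssim \|f(u)\|_{L^r}+\bigl(\iint |\Delta_h f(u)(x)|^r|h|^{-n-sr}\,dx\,dh\bigr)^{1/r}$; your H\"older step in $x$ then yields $\|u\|_{L^{q_2(p-1)}}^{p-1}$ times $\bigl(\int (\|\Delta_h u\|_{L^{q_1}}|h|^{-s})^r|h|^{-n}dh\bigr)^{1/r}$, and a final H\"older in $h$ with exponent $2/r$ on $|h|\le1$ (convergent precisely because $\sigma>s$), together with $\|\Delta_h u\|_{L^{q_1}}\le 2\|u\|_{L^{q_1}}$ on $|h|>1$, bounds this by $\|u\|_{B^{\sigma}_{q_1,2}}$. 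As written, however, your proof hinges on a false inequality.
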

\begin{proof}
By the definition of the fractional Laplacian, we first write
\begin{align*}
D^s f(u)(x) = c_{n, s} \int_{\mathbb{R}^n} \frac{f(u)(x)-f(u)(y)}{\mid x-y\mid ^{n+s}} \, dy,
\end{align*}
where $c_{n,s}>0$ is a constant given by
$$
c_{n,s}=\frac{2^s \Gamma(\frac n 2 +\frac s 2)}{\pi^{\frac n 2} \mid \Gamma(-\frac s 2)\mid }.
$$
Observe that
$$
\mid f(u)(x)-f(u)(y)\mid  \lesssim \mid u(x)-u(y)\mid  \left( \mid u(x)\mid ^{p-1}+ \mid u(y)\mid ^{p-1} \right).
$$
Then we have that
\begin{align*}
\| D^s f(u)\| _{L^r(\mathbb{R}^n)}^r & \lesssim \underbrace{ \int_{\mathbb{R}^n} \left(\int_{\mathbb{R}^n} \frac{\mid u(x)-u(y)\mid \mid u(x)\mid ^{p-1}}{\mid x-y\mid ^{n+s}} \,dy\right)^r \,dx}_{I} \\
& \quad + \underbrace{\int_{\mathbb{R}^n} \left(\int_{\mathbb{R}^n} \frac{\mid u(x)-u(y)\mid \mid u(y)\mid ^{p-1}}{\mid x-y\mid ^{n+s}} \,dy\right)^r \,dx}_{II}.
\end{align*}
It is straightforward to verify that
$$
I \lesssim I_+ + I_-, \quad  II \lesssim II_+ + II_-,
$$
where
\begin{align*}
I_+&=\int_{\mathbb{R}^n} \left(\int_{\mid x-y\mid  > 1} \frac{\mid u(x)-u(y)\mid \mid u(x)\mid ^{p-1}}{\mid x-y\mid ^{n+s}} \,dy\right)^r \,dx, \\
I_-&=\int_{\mathbb{R}^n} \left(\int_{\mid x-y\mid  \leq 1} \frac{\mid u(x)-u(y)\mid \mid u(x)\mid ^{p-1}}{\mid x-y\mid ^{n+s}} \, dy\right)^r \, dx
\end{align*}
and
\begin{align*}
II_+&=\int_{\mathbb{R}^n} \left(\int_{\mid x-y\mid  > 1} \frac{\mid u(x)-u(y)\mid \mid u(y)\mid ^{p-1}}{\mid x-y\mid ^{n+s}} \, dy\right)^r \, dx,\\
II_-&=\int_{\mathbb{R}^n} \left(\int_{\mid x-y\mid  \leq 1} \frac{\mid u(x)-u(y)\mid \mid u(y)\mid ^{p-1}}{\mid x-y\mid ^{n+s}} \, dy\right)^r \, dx.
\end{align*}
In what follows, we are going to estimate every term above. Let us first treat the term $I_+$. From H\"older's inequality, we see that
\begin{align*}
\int_{\mid x-y\mid > 1} \frac{\mid u(x)-u(y)\mid }{\mid x-y\mid ^{n+s}} dy &\leq  \left(\int_{\mid x-y\mid  > 1} \frac{\mid u(x)-u(y)\mid ^2}{\mid x-y\mid ^{n+2(s-\delta)}} \, dy \right)^{\frac 1 2}\left(\int_{\mid x-y\mid  > 1} \frac{1}{\mid x-y\mid ^{n+2\delta}} \, dy \right)^{\frac 1 2} \\
&\lesssim \left(\int_{\mid x-y\mid  > 1} \frac{\mid u(x)-u(y)\mid ^2}{\mid x-y\mid ^{n+2(s-\delta)}} \, dy \right)^{\frac 1 2},
\end{align*}
where $0<\delta<s$. Therefore, we get that
\begin{align*}
I_+ \lesssim \int_{\mathbb{R}^n}  \left(\int_{\mid x-y\mid  > 1} \frac{\mid u(x)-u(y)\mid ^2}{\mid x-y\mid ^{n+2(s-\delta)}} \, dy \right)^{\frac r 2} \mid u(x)\mid ^{r(p-1)} \, dx.
\end{align*}
Invoking H\"older's inequality again, we have that
\begin{align}\label{i1}
I_+ \lesssim \left(\int_{\mathbb{R}^n}  \left(\int_{\mid x-y\mid  > 1} \frac{\mid u(x)-u(y)\mid ^2}{\mid x-y\mid ^{n+2(s-\delta)}} \, dy \right)^{\frac{q_1}{2}}\,dx\right)^{\frac{r}{q_1}} \left(\int_{\mathbb{R}^n} \mid u(x)\mid ^{q_2(p-1)} \,dx\right)^{\frac{r}{q_2}},
\end{align}
where
$$
\frac{1}{r}= \frac{1}{q_1} + \frac{1}{q_2}, \quad q_2 = \frac{q_1r}{q_1-r}.
$$
Notice that
\begin{align} \label{i11} \nonumber
& \left(\int_{\mathbb{R}^n}  \left(\int_{\mid x-y\mid  > 1} \frac{\mid u(x)-u(y)\mid ^2}{\mid x-y\mid ^{n+2(s-\delta)}} \,dy \right)^{\frac{q_1}{2}}dx\right)^{\frac{1}{q_1}}  \leq \\  \nonumber&\leq  \left\| \ \left\|  \frac{\mid u(x)-u(y)\mid }{\mid x-y\mid ^{\frac n 2+(s-\delta)}}  \right\| _{L^2(R^n_y)} \right\| _{L^{q_1}(\mathbb{R}^n_x)} \\
&= \left\| \ \left\|  \frac{\mid u(x)-u(x-h)\mid }{\mid h\mid ^{\frac n 2+(s-\delta)}}  \right\| _{L^2(R^n_h)} \right\| _{L^{q_1}(\mathbb{R}^n_x)}\\ \nonumber
& \leq \left\| \ \left\|  \frac{\mid u(x)-u(x-h)\mid }{\mid h\mid ^{\frac n 2+(s-\delta)}}  \right\| _{L^{q_1}(R^n_x)} \right\| _{L^{2}(\mathbb{R}^n_h)}  \leq \| u\| _{B^{s-\delta}_{q_1,2}(\mathbb{R}^n)}.
\end{align}
This along with \eqref{i1} leads to
\begin{align}\label{eq.an8}
I_+ \lesssim \left( \| u\| _{B^{s-\delta}_{q_1,2}(\mathbb{R}^n)} \| u\| _{L^{q_2(p-1)}}^{p-1}\right)^r.
\end{align}
By a similar way and choosing $\delta=\sigma-s>0$, we can derive that
\begin{align}\label{eq.an9}
I_- \lesssim \left( \| u\| _{B^{\sigma}_{q_1,2}(\mathbb{R}^n)} \| u\| _{L^{q_2(p-1)}}^{p-1}\right)^r.
\end{align}
Next we deal with the term $II_+$. Applying H\"older's inequality, we first have that
\begin{align*}
\int_{\mid x-y\mid > 1} \frac{\mid u(x)-u(y)\mid \mid u(y)\mid ^{p-1}}{\mid x-y\mid ^{n+s}} dy \leq  \left(\int_{\mid x-y\mid  > 1} \frac{\mid u(x)-u(y)\mid ^2}{\mid x-y\mid ^{n+2(s-\delta)}}\, dy \right)^{\frac 12}\left(\int_{\mid x-y\mid  > 1} \frac{\mid u(y)\mid ^{2(p-1)}}{\mid x-y\mid ^{n+2\delta}} \, dy \right)^{\frac 1 2}.
\end{align*}
This results in
\begin{align}\label{i2}
\begin{split}
II_+  &\leq \left\| \left(\int_{\mid x-y\mid  > 1} \frac{\mid u(x)-u(y)\mid ^2}{\mid x-y\mid ^{n+2(s-\delta)}} \, dy \right)^{\frac 1 2}\left(\int_{\mid x-y\mid  > 1} \frac{\mid u(y)\mid ^{2(p-1)}}{\mid x-y\mid ^{n+2\delta}} \, dy \right)^{\frac 1 2} \right\| _{L^r(\mathbb{R}^n)}^r \\
& \leq \left\| \left(\int_{\mid x-y\mid  > 1} \frac{\mid u(x)-u(y)\mid ^2}{\mid x-y\mid ^{n+2(s-\delta)}} dy \right)^{\frac 1 2} \right\| _{L^{q_1}(\mathbb{R}^n)}^r \left\| \left(\int_{\mid x-y\mid  > 1} \frac{\mid u(y)\mid ^{2(p-1)}}{\mid x-y\mid ^{n+2\delta}} \,dy \right)^{\frac 1 2}    \right\| _{L^{q_2}(\mathbb{R}^n)}^r \\
&=\left(\int_{\mathbb{R}^n}\left(\int_{\mathbb{R}^n} \frac{\mid u(x)-u(y)\mid ^2}{\mid x-y\mid ^{n+2(s-\delta)}} \, dy\right)^{\frac{q_1}{2}} \,dx \right)^{\frac{r}{q_1}}
\left\| \int_{\mid x-y\mid  > 1} \frac{\mid u(y)\mid ^{2(p-1)}}{\mid x-y\mid ^{n+2\delta}} \,dy\right\| _{L^{\frac{q_2}{2}}(\mathbb{R}^n)}^{\frac r 2},
\end{split}
\end{align}
where we also used H\"older's inequality. In view of Young's inequality, we know that
\begin{align*}
\left\| \int_{\mid x-y\mid  > 1} \frac{\mid u(y)\mid ^{2(p-1)}}{\mid x-y\mid ^{n+2\delta}} \,dy \right\| _{L^{\frac{q_2}{2}}(\mathbb{R}^n)}
&=\left\| \int_{\mathbb{R}^n} \frac{\mathds{1}_{\mid x-y\mid >1}}{\mid x-y\mid ^{n+2\delta}} \mid u(y)\mid ^{2(p-1)}\,dy \right\| _{L^{\frac{q_2}{2}}(\mathbb{R}^n)} \\
& \leq \left\| \frac{\mathds{1}_{\mid \cdot\mid >1}}{\mid \cdot\mid ^{n+2\delta}} \right\| _{L^1(\mathbb{R}^n)} \| \mid u\mid ^{2(p-1)}\| _{L^{\frac{q_2}{2}}(\mathbb{R}^n)} \lesssim \| u(y)\| _{L^{q_2(p-1)}(\mathbb{R}^n)}^{2(p-1)}.
\end{align*}
It then follows from \eqref{i11} and \eqref{i2} that
\begin{align}\label{eq.an10}
II_+ \lesssim \left( \| u\| _{B^{s-\delta}_{q_1,2}(\mathbb{R}^n)} \| u\| _{L^{q_2(p-1)}}^{p-1}\right)^r
\end{align}
Similarly, we can obatin that
\begin{align}\label{eq.an11}
II_- \lesssim \left( \| u\| _{B^{\sigma}_{q_1,2}(\mathbb{R}^n)} \| u\| _{L^{q_2(p-1)}}^{p-1}\right)^r.
\end{align}
Combining \eqref{eq.an8}, \eqref{eq.an9}, \eqref{eq.an10} and \eqref{eq.an11}, then we have the desired conclusion. This completes the proof.
\end{proof}

Taking advantage of the well-known inclusions between Besov and Sobolev spaces, we now get the following result.

\begin{lem} \label{l.Nem2}
Let $0<s<1$, $1<r<2$ and $n \geq 2$, then, for any $s <\sigma <1$, there holds that
\begin{align*}
\| f(u)\| _{W^{s,r}(\mathbb{R}^n)} \lesssim \| u\| _{W^{\sigma,q_1}(\mathbb{R}^n)} \| u\| _{L^{q_2(p-1)}(\mathbb{R}^n)}^{p-1},
\end{align*}
where
$$
\frac{1}{r} = \frac{1}{q_1} + \frac{1}{q_2}, \quad q_1 \geq 2.
$$
\end{lem}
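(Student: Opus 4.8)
The plan is to deduce Lemma \ref{l.Nem2} from Lemma \ref{l.Nem1} by combining it with the standard relations between Bessel potential (Sobolev) spaces and Besov spaces. Writing $D^s=(-\Delta)^{s/2}$, I would first invoke the standard characterization that, for $0<s<1$ and $1<r<\infty$, the inhomogeneous Sobolev norm is equivalent to
\begin{align*}
\| f(u)\|_{W^{s,r}(\mathbb{R}^n)} \simeq \| f(u)\|_{L^r(\mathbb{R}^n)} + \| D^s f(u)\|_{L^r(\mathbb{R}^n)},
\end{align*}
so that it suffices to bound the two terms on the right-hand side separately.

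For the zeroth-order term I would use that $|f(u)|=|u|^p$, so H\"older's inequality with $1/r=1/q_1+1/q_2$ gives
\begin{align*}
\| f(u)\|_{L^r(\mathbb{R}^n)} = \bigl\| |u|\,|u|^{p-1}\bigr\|_{L^r(\mathbb{R}^n)} \le \| u\|_{L^{q_1}(\mathbb{R}^n)}\, \| u\|_{L^{q_2(p-1)}(\mathbb{R}^n)}^{p-1},
\end{align*}
and the trivial embedding $W^{\sigma,q_1}(\mathbb{R}^n)\hookrightarrow L^{q_1}(\mathbb{R}^n)$ (valid since $\sigma>0$) absorbs $\|u\|_{L^{q_1}}$ into $\|u\|_{W^{\sigma,q_1}}$, producing the desired shape.

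For the first-order term I would apply Lemma \ref{l.Nem1}. The only discrepancy is that Lemma \ref{l.Nem1} yields the Besov norm $\|u\|_{B^{\sigma}_{q_1,2}}$, whereas the target carries the Sobolev norm $\|u\|_{W^{\sigma,q_1}}$, and this is exactly the delicate point: for $q_1\ge 2$ one has $W^{\sigma,q_1}=F^{\sigma}_{q_1,2}$, and the inclusion chain $B^{\sigma}_{q_1,2}\hookrightarrow F^{\sigma}_{q_1,2}\hookrightarrow B^{\sigma}_{q_1,q_1}$ runs the wrong way to estimate $\|u\|_{B^{\sigma}_{q_1,2}}$ by $\|u\|_{W^{\sigma,q_1}}$ at equal smoothness. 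I would circumvent this by exploiting the freedom in the regularity exponent of Lemma \ref{l.Nem1}: fix $\sigma'\in(s,\sigma)$ and apply that lemma with $\sigma'$ in place of $\sigma$, obtaining $\|D^s f(u)\|_{L^r}\lesssim \|u\|_{B^{\sigma'}_{q_1,2}}\|u\|_{L^{q_2(p-1)}}^{p-1}$. Since $\sigma>\sigma'$, the Besov embedding $B^{\sigma}_{q_1,q_1}\hookrightarrow B^{\sigma'}_{q_1,2}$ holds irrespective of the fine indices, so
\begin{align*}
W^{\sigma,q_1}(\mathbb{R}^n)=F^{\sigma}_{q_1,2}(\mathbb{R}^n)\hookrightarrow B^{\sigma}_{q_1,q_1}(\mathbb{R}^n)\hookrightarrow B^{\sigma'}_{q_1,2}(\mathbb{R}^n),
\end{align*}
which gives $\|u\|_{B^{\sigma'}_{q_1,2}}\lesssim \|u\|_{W^{\sigma,q_1}}$. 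Putting the two estimates together yields the claim.

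The main obstacle is precisely the embedding direction described above: a naive attempt to pass from $B^{\sigma}_{q_1,2}$ to $W^{\sigma,q_1}$ at equal smoothness fails for $q_1>2$ because the fine indices are unfavorable; the resolution is to \emph{spend} an arbitrarily small amount of smoothness, which the hypothesis $s<\sigma$ makes available, after which the gain in regularity dominates the fine-index mismatch and the required embedding becomes free.
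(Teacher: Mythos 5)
Your proposal is correct and is essentially the paper's own argument: the paper proves Lemma \ref{l.Nem2} in one line by combining Lemma \ref{l.Nem1} with the ``well-known inclusions between Besov and Sobolev spaces,'' which is exactly the chain $W^{\sigma,q_1}\hookrightarrow B^{\sigma}_{q_1,q_1}\hookrightarrow B^{\sigma'}_{q_1,2}$ (for $s<\sigma'<\sigma$) that you make explicit, together with the elementary H\"older bound for the $L^r$ part. Your observation that the equal-smoothness embedding $W^{\sigma,q_1}\hookrightarrow B^{\sigma}_{q_1,2}$ fails for $q_1>2$, so that one must spend a small amount of regularity---which is available precisely because Lemma \ref{l.Nem1} holds for every exponent above $s$---is the detail the paper leaves implicit.
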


\subsection*{Acknowledgements}

The first  author was supported in part by  INDAM, GNAMPA-Gruppo Nazionale per l'Analisi Matematica, la Probabilita e le loro Applicazioni, by Institute of Mathematics and Informatics, Bulgarian Academy of Sciences, by Top Global University Project, Waseda University and  the Project PRA 2018 49 of  University of Pisa and by the project PRIN  2020XB3EFL funded by the Italian Ministry of Universities and Research. The second author was supported by the Postdoctoral
Science Foundation of China (No.Y990071G21). Part of the work has been completed while the second author visited the Department of Mathematics, University of Pisa, whose hospitality he gratefully acknowledges.

\end{document}